\numberwithin{equation}{section}
\newtheorem{Theorem}{Theorem}[section]
\newtheorem{Proposition}[Theorem]{Proposition}
\newcommand{\pa}{\partial}
\begin{document}
\allowdisplaybreaks

\newcommand{\arXivNumber}{2104.00895}

\renewcommand{\thefootnote}{}

\renewcommand{\PaperNumber}{083}

\FirstPageHeading

\ShortArticleName{Resolvent Trace Formula and Determinants of $n$ Laplacians}

\ArticleName{Resolvent Trace Formula and Determinants\\ of $\boldsymbol{n}$ Laplacians on Orbifold Riemann Surfaces\footnote{This paper is a~contribution to the Special Issue on Mathematics of Integrable Systems: Classical and Quantum in honor of Leon Takhtajan.

The full collection is available at \href{https://www.emis.de/journals/SIGMA/Takhtajan.html}{https://www.emis.de/journals/SIGMA/Takhtajan.html}}}

\Author{Lee-Peng TEO}

\AuthorNameForHeading{L.-P.~Teo}

\Address{Department of Mathematics, Xiamen University Malaysia,\\ Jalan Sunsuria, Bandar Sunsuria, 43900, Sepang, Selangor, Malaysia}
\Email{\href{mailto:lpteo@xmu.edu.my}{lpteo@xmu.edu.my}}

\ArticleDates{Received April 07, 2021, in final form September 05, 2021; Published online September 13, 2021}

\Abstract{For $n$ a nonnegative integer, we consider the $n$-Laplacian $\Delta_n$ acting on the space of $n$-differentials on a confinite Riemann surface $X$ which has ramification points. The trace formula for the resolvent kernel is developed along the line \`a la Selberg. Using the trace formula, we compute the regularized determinant of $\Delta_n+s(s+2n-1)$, from which we deduce the regularized determinant of $\Delta_n$, denoted by $\det\!'\Delta_n$. Taking into account the contribution from the absolutely continuous spectrum, $\det\!'\Delta_n$ is equal to a constant $\mathcal{C}_n$ times $Z(n)$ when $n\geq 2$. Here $Z(s)$ is the Selberg zeta function of $X$. When $n=0$ or $n=1$, $Z(n)$ is replaced by the leading coefficient of the Taylor expansion of $Z(s)$ around $s=0$ and~$s=1$ respectively. The constants $\mathcal{C}_n$ are calculated explicitly. They depend on the genus, the number of cusps, as well as the ramification indices, but is independent of the moduli parameters.}

\Keywords{determinant of Laplacian; $n$-differentials; cocompact Riemann surfaces; Selberg trace formula}

\Classification{14H15; 11F72; 11M36}

\begin{flushright}
\begin{minipage}{65mm}
\it Dedicated to Professor Leon Takhtajan\\ on the occasion of his 70th birthday
\end{minipage}
\end{flushright}

\renewcommand{\thefootnote}{\arabic{footnote}}
\setcounter{footnote}{0}

\section{Introduction}

Let $\mathbb{H}$ be the upper half plane, and let $\Gamma$ be a Fuchsian group, a discrete subgroup of ${\rm PSL}(2,\mathbb{R})$. The group $\Gamma$ acts discontinuously on $\mathbb{H}$ and the quotient $X=\Gamma\backslash\mathbb{H}$ is a Riemann surface. In this work, we consider the case where $X$ is a cofinite Riemann surface. In other words, $X$ has finite hyperbolic volume. This includes compact Riemann surfaces, as well as surfaces with finitely many cusps and ramification points.

Let
\[
\Delta_0=-y^2\bigg(\frac{\pa^2}{\pa x^2}+\frac{\pa^2}{\pa y^2}\bigg)
\] be the Laplacian operator acting on functions. It is an invariant operator under the action of ${\rm PSL}(2,\mathbb{R})$.

\looseness=-1 In the seminal paper \cite{Selberg1956}, Selberg developed a theory that can be used to study the spectrum of $\Delta_0$ on Riemann surfaces, making full use of the invariant property of the Laplacian operator. The theory was subsequently developed and elaborated in \cite{Fischer1987, Hejhal1976, Hejhal1983,Selberg, Venkov1982,VenkovKalininFaddeev,Voros1987}.
One of the extensions of Selberg's theory is to consider the spectrum of $D_n$ for $n\geq 1$, where $D_n=-K_{n-1}L_n-n(n-1)$, and $K_n$ and $L_n$ are the Maass operators\footnote{Our $D_n$ has a negative sign compared to that used in \cite{Fay}, while Fischer \cite{Fischer1987} denote it by $-\Delta_n$.} (see Section~\ref{sec2} for more details).

The significance of this theory is an identity that relates the spectral trace to the geometric trace.

In mathematical physics, the regularized determinant of the Laplacian is of special interest. This has been considered in \cite{DHokerPhong1986, Sarnak1987} for compact Riemann surfaces. Subsequently, Efrat \cite{Efrat1988} exten\-ded the result to smooth Riemann surfaces with cusps. In the special case of arithmetic surfaces, this has also been considered in \cite{Koyama1991, Koyama1991_2}. In \cite{Gong1995}, Gong derived the regularized determinant in~full generality
using the trace formula developed in \cite{Fischer1987} directly, which can be applied to cofinite Riemann surfaces and for the operators $D_n$ with unitary twists.

The goal of this paper is to study the explicit expression for the resolvent trace formula and the regularized determinant of $n$-Laplacians $\Delta_n$ on Riemann surfaces that have finite volumes. $\Delta_n$ has the same spectrum as $D_n+n(n-1)$. In principle, one can extract the expression for the trace formula and the determinant of $\Delta_n$ from the formulas obtained in \cite{Fischer1987, Gong1995, Hejhal1983, Venkov1982}. However, we find that a direct approach from a theory for $\Delta_n$ would be more appealing for further application. This is the tasks undertaken in this paper.

\section[Laplacians of n-differential]{Laplacians of $\boldsymbol{n}$-differentials}\label{sec2}

Let $\mathbb{H}$ be the upper half of the complex plane equipped with the hyperbolic metric
\begin{gather*}
{\rm d}s^2=\frac{{\rm d}x^2+{\rm d}y^2}{y^2}.
\end{gather*}
The corresponding metric density is $\rho(z)=(\operatorname{Im}z)^{-2}$ and the area form is
\begin{gather*}
{\rm d}\mu(z)=\frac{{\rm d}x\,{\rm d}y}{y^2}=\rho(z)\,{\rm d}^2z.
\end{gather*}
The group ${\rm PSL}(2,\mathbb{R})$ acts on $\mathbb{H}$ transitively by M$\ddot{\text{o}}$bius transformations
\begin{gather*}
\gamma=\begin{pmatrix}a & b\\c & d\end{pmatrix}\!\colon \ z\mapsto \gamma(z)=\frac{az+b}{cz+d}.
\end{gather*}
The hyperbolic metric is an invariant metric under this group action.

The distance between two points $z$ and $w$ on $\mathbb{H}$, denoted by $d(z,w)$, is given by
\begin{gather*}
\cosh d(z,w)=1+2u(z,w),
\end{gather*}
where
\begin{gather*}
u(z,w)=\frac{|z-w|^2}{4\operatorname{Im} z\operatorname{Im}w}
\end{gather*}
is a point-pair invariant, i.e.,
\begin{gather*}
u(\gamma z, \gamma w)=u(z,w)\qquad \text{for all}\quad \gamma\in{\rm PSL}(2,\mathbb{R}).
\end{gather*}

Let $\Gamma$ be a cofinite Fuchsian group, namely, it is a discrete subgroup of ${\rm PSL} (2, \mathbb{R})$ such that $X=\Gamma\backslash\mathbb{H}$ is a Riemann surface with finite hyperbolic volume. Such $\Gamma$ is finitely generated. More precisely, if $X$ is a genus $g$ Riemann surface with $q$-punctures and $v$ ramification points, then~$\Gamma$ is generated by $2g$ hyperbolic elements $\alpha_1, \beta_1, \dots, \alpha_g$, $\beta_g$, $q$ parabolic elements $\kappa_1, \dots, \kappa_q$, as well as $v$ elliptic elements $\tau_1, \dots, \tau_v$ of orders $m_1, \dots, m_v$ respectively, with
\begin{gather*}
2\leq m_1\leq m_2\leq \dots \leq m_v.
\end{gather*}
For each $1\leq j\leq v$, $\tau_j^{m_j}=I$. The generators of $\Gamma$ satisfy the additional relation
\begin{gather*}
\alpha_1\beta_1\alpha_1^{-1}\beta_1^{-1}\cdots \alpha_g\beta_g\alpha_g^{-1}\beta_g^{-1}\kappa_1\cdots\kappa_q\tau_1\cdots\tau_v=I,
\end{gather*}
where $I$ is the identity element. We say that the Riemann surface $X$ and the group $\Gamma$ are of type $(g;q;m_1, m_2, \dots, m_v)$.

Let $\mathcal{K}$ be the canonical bundle of $X$.
For a nonnegative integer $n$, let $S(n)$ be the space of sections of $\mathcal{K}^{n/2}\otimes \bar{\mathcal{K}}^{-n/2}$. A function in $S(n)$ can be realized as a function $f\colon \mathbb{H}\rightarrow\mathbb{C}$ satisfying
\begin{gather*}
f(\gamma z)\bigg(\frac{c\bar{z}+ d}{cz+d}\bigg)^n =f(z),\qquad \text{for all}\quad \gamma=\begin{pmatrix}a & b\\c & d\end{pmatrix}\in\Gamma.
\end{gather*}
In the context of analytic number theory, it is natural to consider the Maass operators $K_n$: $S(n)\rightarrow S(n+1)$, $L_n\colon S(n)\rightarrow S(n-1)$ and $D_n\colon S(n)\rightarrow S(n)$ given by~\cite{Fay}
\begin{gather*}
K_n=\left(z-\bar{z}\right)\frac{\pa}{\pa z}+n,
\\
L_n=-\left(z-\bar{z}\right)\frac{\pa}{\pa \bar{z}}-n,
\\
D_n=-y^2\bigg(\frac{\pa^2}{\pa x^2}+\frac{\pa^2}{\pa y^2}\bigg)+2{\rm i}ny\frac{\pa}{\pa x}.
\end{gather*}
These operators are invariant with respect to the ${\rm PSL}(2,\mathbb{R})$ action and
\begin{gather*}
D_{n+1}K_n=K_{n}D_n,\qquad D_{n-1}L_n=L_{n}D_{n},
\\
D_n=-L_{n+1}K_n-n(n+1)=-K_{n-1}L_n-n(n-1).
\end{gather*}
For applications of the theory of Riemann surfaces in physics, it is more natural to consider the space of tensors $f(z)({\rm d}z)^n$, or called $n$-differentials, on $X$. These are sections of $\mathcal{K}^n$, which are functions $f\colon \mathbb{H}\rightarrow\mathbb{C}$ satisfying
\begin{gather*}
f(\gamma z)\gamma'(z)^n=f(z),\qquad \text{for all}\quad \gamma\in\Gamma.
\end{gather*}
We denote this space by $H_n^2(\Gamma)$. It is a Hilbert space with the inner product
\begin{gather*}
\langle f, g\rangle =\iint_X f(z)\overline{g(z)}\rho(z)^{-n}{\rm d}\mu(z).
\end{gather*}
The $n$-Laplacian operator $\Delta_n=4\bar{\partial}^*_n\bar{\partial}_n$ is an operator on $H_n^2(\Gamma)$ with explicit formula given by
\begin{gather*}
\Delta_n= -4y^{2-2n}\frac{\pa}{\pa z}y^{2n}\frac{\pa}{\pa\bar{z}}.
\end{gather*}
This is a positive operator. We put a factor $4$ in front of $\bar{\partial}^*_n\bar{\partial}_n$ so that when $n=0$, we get the usual Laplacian on functions $\Delta_0$.

Throughout this paper, the nonnegative integer $n$ is fixed. Our goal is to study the trace formula for the resolvent of $\Delta_n$ and the regularized determinant of $\Delta_n$.
The isometry $I$: \mbox{$H_n^2(\Gamma)\rightarrow S(n)$} defined by
\begin{gather*}
f(z) \mapsto y^n f(z)
\end{gather*}
conjugates $\Delta_n$ with $D_n+n(n-1)$. Hence, $\Delta_n$ and $D_n+n(n-1)$ have the same spectrum.

For a Riemann surface with cusps, the spectrum of $\Delta_n$ consists of a discrete part and a~continuous part. To study the spectrum of the continuous part, we need to consider Eisenstein series, which we discuss in next section.

A powerful tool to study the spectrum of $\Delta_n$ is the Selberg trace formula.
The Selberg trace formula for the operator $D_n$ has been developed extensively in \cite{Fischer1987, Hejhal1983, Venkov1982}. It can be adapted to $\Delta_n$. To find the determinants of Laplacian, we will follow Fischer \cite{Fischer1987} and derive the trace formula for the resolvent kernel of the Laplacian operator first.

Let $\Psi\colon \mathbb{R}\rightarrow \mathbb{C}$ be a function that vanishes at infinity and satisfies other regularity conditions to be specified when needed. Define $k\colon \mathbb{H}\times\mathbb{H}\rightarrow\mathbb{C}$ by
\begin{equation}\label{eq0903_2}
k(z,w)=\Psi(u(z,w))\frac{(-4)^n}{(z-\bar{w})^{2n}}=\Psi(u(z,w))H_n(z,w),\end{equation}
where
\begin{gather*}H_n(z,w)=\frac{(-4)^n}{(z-\bar{w})^{2n}}.\end{gather*}
Notice that $H_n(z,z)=\rho(z)^n$.
Since $H_n(\gamma z, \gamma w)\gamma'(z)^n\overline{\gamma'(w)}^n=H_n(z,w)$, we find that
\begin{gather*}
k(\gamma z, \gamma w)\gamma'(z)^n\overline{\gamma'(w)}^n=k(z,w).
\end{gather*}
In other words, $k(z,w)$ is a point-pair invariant kernel on~$\mathbb{H}$.

Given $s\in \mathbb{C}$, the function $f(z)=(\operatorname{Im}(z))^{s-n}$ is an ``eigenfunction'' of $\Delta_n$ with eigenvalue $-(s-n)(s+n-1)$. The following proposition shows that $f(z)$ is also an eigenfunction for the operator defined by the point-pair invariant kernel $k(z,w)$.

\begin{Proposition} \label{P1}
Let $\Psi\colon \mathbb{R}\rightarrow \mathbb{C}$ be a continuous function with compact support and let \begin{gather*}
k(z,w)= \Psi(u(z,w))H_n(z,w)
\end{gather*}
be the corresponding point-pair invariant kernel. If $s\in \mathbb{C}$, then
\begin{gather*}
\iint_{\mathbb{H}}k(z,w)(\operatorname{Im} w)^{s-n}(\operatorname{Im}w)^{2n}{\rm d}\mu(w)=\Lambda_s (\operatorname{Im} z)^{s-n},
\end{gather*}
where
\begin{gather*}
\Lambda_s =\iint_{\mathbb{H}}k({\rm i},w)(\operatorname{Im} w)^{s-n}(\operatorname{Im}w)^{2n}{\rm d}\mu(w).
\end{gather*}
\end{Proposition}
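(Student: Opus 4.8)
The plan is to introduce the integral operator
\[
(\mathcal{L}f)(z)=\iint_{\mathbb{H}}k(z,w)\,f(w)\,(\operatorname{Im} w)^{2n}\,{\rm d}\mu(w),
\]
so that the left-hand side of the claimed identity equals $(\mathcal{L}f)(z)$ for the particular choice $f(z)=(\operatorname{Im} z)^{s-n}$, and $\Lambda_s=(\mathcal{L}f)({\rm i})$; the proposition then asserts exactly that this $f$ is an eigenfunction of $\mathcal{L}$ with eigenvalue $\Lambda_s$. Convergence is not an issue: since $\Psi$ has compact support, $k(z,w)$ vanishes once $u(z,w)$, and hence $d(z,w)$, exceeds a fixed bound, so for fixed $z$ the $w$-integration runs over a compact geodesic ball, on which $\operatorname{Im} w$ stays in a fixed compact subinterval of $(0,\infty)$; the integral therefore converges absolutely for every $s\in\mathbb{C}$. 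Since $n\in\mathbb{Z}$ the powers $\gamma'(w)^n$ and $(\operatorname{Im} w)^{2n}$ carry no branch ambiguity, and $(\operatorname{Im} z)^{s-n}:={\rm e}^{(s-n)\log\operatorname{Im} z}$ is unambiguous because $\operatorname{Im} z>0$.

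The first step is a general transformation law for $\mathcal{L}$. Fix $\gamma\in{\rm PSL}(2,\mathbb{R})$ and substitute $w\mapsto\gamma w$ in $(\mathcal{L}f)(\gamma z)$. Using invariance of the hyperbolic area form, ${\rm d}\mu(\gamma w)={\rm d}\mu(w)$; the standard identity $\operatorname{Im}(\gamma w)=|\gamma'(w)|\operatorname{Im} w$, which gives $(\operatorname{Im}\gamma w)^{2n}=\gamma'(w)^n\overline{\gamma'(w)}^{\,n}(\operatorname{Im} w)^{2n}$; and the point-pair invariance $k(\gamma z,\gamma w)=k(z,w)\gamma'(z)^{-n}\overline{\gamma'(w)}^{-n}$ recorded just before the proposition. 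The factors $\overline{\gamma'(w)}^{\,n}$ then cancel against $\overline{\gamma'(w)}^{-n}$, leaving
\[
(\mathcal{L}f)(\gamma z)\,\gamma'(z)^n=\iint_{\mathbb{H}}k(z,w)\,f(\gamma w)\gamma'(w)^n\,(\operatorname{Im} w)^{2n}\,{\rm d}\mu(w)=\bigl(\mathcal{L}(f|\gamma)\bigr)(z),
\]
where $(f|\gamma)(w):=f(\gamma w)\gamma'(w)^n$. The second step specializes $f(w)=(\operatorname{Im} w)^{s-n}$ and, for a given $z=x+{\rm i}y$, takes $\sigma$ to be the \emph{upper triangular} transformation $w\mapsto yw+x$, which satisfies $\sigma({\rm i})=z$ and $\sigma'(w)\equiv y$. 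Then
\[
(f|\sigma)(w)=(\operatorname{Im}\sigma w)^{s-n}\sigma'(w)^n=(y\operatorname{Im} w)^{s-n}y^n=y^s(\operatorname{Im} w)^{s-n}=y^s f(w);
\]
the reason for insisting that $\sigma$ be upper triangular is precisely that then the otherwise $w$-dependent factor $|\sigma'(w)|^{s-n}\sigma'(w)^n$ is constant. Feeding $\gamma=\sigma$ into the transformation law gives $(\mathcal{L}f)(z)\,y^n=\bigl(\mathcal{L}(f|\sigma)\bigr)({\rm i})=y^s(\mathcal{L}f)({\rm i})=y^s\Lambda_s$, hence $(\mathcal{L}f)(z)=\Lambda_s(\operatorname{Im} z)^{s-n}$, which is the assertion.

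I do not expect a genuine obstacle: the whole argument is bookkeeping of automorphy factors in the transformation law, and the one real choice — using an upper triangular $\sigma$, or equivalently first reducing by horizontal translations $w\mapsto w+b$ (under which $f|\gamma=f$, so $\mathcal{L}f$ depends only on $\operatorname{Im} z$) and then by dilations $w\mapsto a^2w$ (under which $f|\gamma=a^{2s}f$, which pins the $\operatorname{Im} z$-dependence to the power $s-n$) — reflects that $(\operatorname{Im} z)^{s-n}$ is an eigenfunction only for the stabilizer of the cusp at $\infty$, not for all of ${\rm PSL}(2,\mathbb{R})$. The step requiring care is keeping the $\overline{\gamma'(w)}$-powers and the $\gamma'(w)$-powers straight so that the cancellation in the transformation law is exact.
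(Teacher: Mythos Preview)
Your proof is correct and follows essentially the same route as the paper's: choose an upper-triangular element of ${\rm PSL}(2,\mathbb{R})$ sending ${\rm i}$ to $z$ (the paper writes it as $\gamma=\big(\begin{smallmatrix}a&b\\0&d\end{smallmatrix}\big)$ with $a=\sqrt{\operatorname{Im}z}$, $d=1/a$, $ab=\operatorname{Re}z$, which is exactly your $\sigma\colon w\mapsto yw+x$), substitute $w\mapsto\gamma w$, and combine the point-pair invariance of $k$ with $\operatorname{Im}(\gamma w)=|\gamma'(w)|\operatorname{Im} w$ to extract the factor $(\operatorname{Im} z)^{s-n}$. Your packaging via the operator $\mathcal{L}$ and the slash action $f|\gamma$ is a bit more abstract, but the underlying computation is identical.
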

\begin{proof}
Given $z\in\mathbb{H}$, notice that $\gamma=\big(\begin{smallmatrix} a & b \\ 0 & d\end{smallmatrix}\big)$, with $ \frac{1}{d}=a=\sqrt{\operatorname{Im}z}$ and $ab=\operatorname{Re}z$ maps ${\rm i}$ to $z$. For such $\gamma$, $\gamma'(z)=a^2=\operatorname{Im}z$.
Let
\begin{gather*}g(z)=\iint_{\mathbb{H}}k(z,w)(\operatorname{Im} w)^{s-n}(\operatorname{Im}w)^{2n}{\rm d}\mu(w).\end{gather*}Using $k(\gamma {\rm i},\gamma w)\gamma'({\rm i})^n\overline{\gamma'(w)}^n=k({\rm i},w)$ and $\operatorname{Im}(\gamma w)=\operatorname{Im}(w)|\gamma'(w)|$, a change of variables $w\mapsto \gamma w$ gives
\begin{align*}
g(z)
 &= \iint_{\mathbb{H}}k(\gamma {\rm i},\gamma w)(\operatorname{Im} \gamma w)^{s+n}{\rm d}\mu(w)
 = a^{2s-2n}\iint_{\mathbb{H}}k({\rm i},w)(\operatorname{Im} w)^{s+n}{\rm d}\mu(w)
 \\
 &= \Lambda_s (\operatorname{Im}z )^{s-n},
\end{align*}
where
\begin{gather*}
\Lambda_s=\iint_{\mathbb{H}}k({\rm i},w)(\operatorname{Im} w)^{s+n}{\rm d}\mu(w).\tag*{\qed}
\end{gather*}\renewcommand{\qed}{}
\end{proof}

Given the point-pair invariant kernel $k(z,w)$ on $\mathbb{H}$, we can define a point-pair invariant ker\-nel~$K(z,w)$ for the Riemann surface $X$ by
\begin{gather}\label{eq0903_3}
K(z,w)=\sum_{\gamma\in\Gamma}k(\gamma z, w)\gamma'(z)^n, \qquad z, w\in\mathbb{H}.
\end{gather}
Notice that if $f\in H_n^2(\Gamma)$, then
\begin{gather*}\iint_X K(z,w)f(w)\rho(w)^{-n}{\rm d}\mu(w)=\iint_{\mathbb{H}}k(z,w)f(w)\rho(w)^{-n}{\rm d}\mu(w).\end{gather*}

 We want to find a function $\Psi_{n,s}(u)=\Psi(u)$ so that the corresponding $K(z,w)$ is the resolvent kernel of the operator
 $\Delta_n+s(s+2n-1)$. The reason to use $s(s+2n-1)$ instead of $(s-n)(s+n-1)$ is so that when $s=0$, we get $\Delta_n$. This function $\Psi(u)$ must satisfy the differential equation
\begin{gather*}
\Delta_n(\Psi(u(z,w))H_n(z,w))=-s(s+2n-1)\Psi(u(z,w))H_n(z,w).
\end{gather*}Using the fact that
\begin{gather*}
\Delta_n(\Psi(u(z,w))H_n(z,w))
=\bigg({-}u(u+1)\frac{\pa^2 \Psi}{\pa u^2}+[(2n-2)u-1]\frac{\pa \Psi}{\pa u} \bigg)H_n(z,w),
\end{gather*}
we find that
\begin{gather*}
u(u+1)\frac{\pa^2 \Psi}{\pa u^2}-[(2n-2)u-1]\frac{\pa \Psi}{\pa u} -s(s+2n-1)\Psi=0.
\end{gather*}
A solution is given by
\begin{gather}\label{eq0903_1}
\Psi_{n,s}(u)= \frac{(u+1)^{-s}}{4\pi}\frac{\Gamma(s)\Gamma(s+2n)}{\Gamma(2s+2n)}\,{}_2F_1\left(\begin{matrix} s,s+2n\\2s+2n\end{matrix}\,;\,\frac{1}{u+1}\right)\!,
\end{gather}
where
\begin{gather*}
{}_2F_1\left(\begin{matrix} a,b\\c\end{matrix}\,;\,z\right)=\frac{\Gamma(c)}{\Gamma(a)\Gamma(b)}
\sum_{k=0}^{\infty}\frac{\Gamma(a+k)\Gamma(b+k)}{\Gamma(c+k)}\frac{z^k}{k!}
\end{gather*}
is the hypergeometric function. The normalization constant $1/4\pi$ is chosen so that when $u\rightarrow 0^+$,
\begin{gather*}
\Psi(u)\sim \frac{1}{4\pi} \log\frac{1}{u}.
\end{gather*}
More explicitly, one can show that as $u\rightarrow 0^+$,
\begin{gather}\label{eq0903_13}
\Psi_{n,s}(u)= \frac{1}{4\pi}\bigg\{\log\frac{1}{u}+2\psi(1)-\psi(s+2n)-\psi(s)\bigg\}+O(u).
\end{gather}Here
\begin{gather*}\psi(s)=\frac{\Gamma'(s)}{\Gamma(s)}\end{gather*}
 is the logarithmic derivative of the gamma function $\Gamma(s)$, and $\psi(1)=-\gamma$, where $\gamma$ is the Euler constant.

\section{The Eisenstein series}
For Riemann surfaces that are not compact, it is well-known that the spectrum of the Laplacians contain a continuous part, which are related to the Eisenstein series.

 The Riemann surface $X$ has $q$ cusps corresponding to the $q$ parabolic elements $\kappa_1, \dots, \kappa_q$. For $1\leq i\leq q$, let $x_i\in \mathbb{R}\cup\{\infty\}$ be the fixed point of $\kappa_i$. Then $x_i$ is a representative of the cusp associated to $\kappa_i$. Let $\sigma_i\in{\rm PSL}(2,\mathbb{R})$ be an element that conjugates $\kappa_i$ to
 $ \big(\begin{smallmatrix} 1 & \pm1\\ 0 & 1\end{smallmatrix}\big)$, namely,
\begin{gather*}
\sigma_i^{-1}\kappa_i\sigma_i=\begin{pmatrix} 1 & \pm1\\ 0 & 1\end{pmatrix}\!.
\end{gather*}
Then $\sigma_i(\infty)=x_i$. If $B$ is the parabolic subgroup generated by $\big(\begin{smallmatrix} 1 & 1\\ 0 & 1\end{smallmatrix}\big)$, then $\Gamma_i=\sigma_i B\sigma_i^{-1}$ is the stabilizer of the cusp $x_i$ in $\Gamma$.

Define the Eisenstein series associated to the cusp $x_i$ by
\begin{gather*}
E_i(z,s; n)=\sum_{\gamma\in \Gamma_i\backslash\Gamma}\big[\operatorname{Im}\big(\sigma_i^{-1}\gamma z\big)\big]^{s-n}\big[\big(\sigma_i^{-1} \gamma\big)'(z)\big]^{n},
\end{gather*}
when $\operatorname{Re}s>1$. Here to simplify notation, we write $\sigma^{-1}\circ\gamma$ as~$\sigma^{-1}\gamma$, and we write $\big(\sigma^{-1}\circ\gamma\big)(z)$ as~$\sigma^{-1}\gamma z$. One can check that the definition of the Eisenstein series here differs from the one used in \cite{Fischer1987, Hejhal1983, Venkov1982} by the factor $y^{n}$. This makes good sense in view of the isometry between~$ H_n^2(\Gamma)$ and~$S(n)$ that we discussed earlier.

Using the fact that $\Delta_n y^{s-n}=-(s-n)(s+n-1)y^{s-n}$, and $\Delta_n$ is invariant with respect to the action of ${\rm PSL}(2,\mathbb{R})$, we find that
\begin{gather*}\Delta_n E_i(z,s;n)=-(s-n)(s+n-1)E_i(z,s;n).\end{gather*}
The theory of Eisenstein series has been developed extensively in the books \cite{Fischer1987, Hejhal1983, Iwaniec2002}. In the following, we follow \cite{Iwaniec2002} to give a brief exposition of the facts needed in this work.

Given $1\leq i, j\leq q$, there is a double coset decomposition of the group $\sigma_i^{-1}\Gamma\sigma_j$ into disjoint double cosets given by
\begin{gather*} \sigma_i^{-1}\Gamma\sigma_j=\delta_{ij}\Omega_{\infty}\cup\bigcup_{c>0}\bigcup_{d\,\text{mod}\, c}\Omega_{d/c}.
\end{gather*}
Here
$\Omega_{\infty}$ contains all the upper triangular matrices in $ \sigma_i^{-1}\Gamma\sigma_j$, and $\Omega_{d/c}$ is the double coset $B\omega_{d/c}B$, where $\omega_{d/c}$ is an element of $ \sigma_i^{-1}\Gamma\sigma_j$ of the form
\begin{gather*}
\omega_{d/c}=\begin{pmatrix} * & *\\c& d\end{pmatrix}\!.
\end{gather*}
Using this, one can show that when $y\rightarrow\infty$,
\begin{gather*}
E_i(\sigma_j z,s;n)\sigma_j'(z)^n=\delta_{ij}y^{s-n}+\varphi_{ij}(s;n)y^{1-s-n}+\text{exponentially decaying terms},
\end{gather*}
where
\begin{gather*}
\varphi_{ij}(s;n)=\sqrt{\pi}\frac{\Gamma(s)\Gamma\big(s-\frac{1}{2}\big)} {\Gamma(s+n)\Gamma(s-n)}\sum_{c>0}\sum_{d\,\text{mod}\,c}\frac{1}{c^{2s}}
\end{gather*}
is a Dirichlet series.
From this explicit expression, one finds that
{\samepage\begin{gather}\label{eqphi} \varphi_{ij}(s;n)=\frac{\Gamma(s)^2}{\Gamma(s+n)\Gamma(s-n)}\varphi_{ij}(s;0).
\end{gather}

}\noindent
Namely, there is a simple relation between $\varphi_{ij}(s;n)$ and $\varphi_{ij}(s;0)$.

The $q\times q$ matrix $\Phi(s)=[\varphi_{ij}(s)]$ is called the scattering matrix and it plays an important role in the spectral theory. It has the following properties:
\begin{gather*}
\overline{\Phi(s)}=\Phi(\bar{s}),\qquad \Phi(s)^T=\Phi(s).
\end{gather*}
Moreover, if we denote by $\Xi(z,s;n)$ the column matrix with components $E_i(z,s;n)$, then
\begin{gather*}
\Xi(z,s;n)=\Phi(s;n)\,\Xi(z,1-s;n).
\end{gather*}
It follows that
\begin{gather*}
\Phi(s)\Phi(1-s)=I_q.
\end{gather*}
Denote by $\varphi(s)$ the determinant of $\Phi(s)$, namely, $\varphi(s)=\det\Phi(s).$ Then we find that
\begin{gather*}
\overline{\varphi(s)}=\varphi(\bar{s}),\qquad
\varphi(s)\varphi(1-s)=1.
\end{gather*}

\section{The trace of the resolvent kernel}
The spectral theory for the Riemann surface $X=\Gamma\backslash\mathbb{H}$ states that
 there is a countable orthonormal system $\{u_k\}_{k\geq 0}$ of eigenfunctions of $\Delta_n$ with eigenvalues $0=\lambda_0\leq \lambda_1\leq \lambda_2\leq \cdots$, and eigenpackets given by the Eisenstein series so that for any $f\in H_n^2(\Gamma)$,
\begin{align*}
f(z)= \sum_{k=0}^{\infty}\langle f, u_k\rangle u_k(z) +\frac{1}{4\pi}\sum_{j=1}^q\int_{-\infty}^{\infty}\bigg\langle f, E_j\bigg(\cdot,\frac{1}{2}+{\rm i}r;n\bigg)\bigg\rangle E_j\bigg(z,\frac{1}{2}+{\rm i}r;n\bigg){\rm d}r.
\end{align*}
We want to use this formula to find the trace of the resolvent $(\Delta_n+s(s+2n-1))^{-1}$. However, the kernel function $K(z,w)$ for this resolvent defined by $\Psi_{n,s}(u)$ in \eqref{eq0903_1} has singularity along $z=w$. To circumvent this problem, we consider the kernel function $K(z,w)$ with $\Psi(u)=\Psi_{n,s}(u)-\Psi_{n,a}(u)$, for some fixed $a$. For this kernel function,
\begin{gather*}
\langle K(\cdot, w), u_k\rangle = \Lambda(\lambda_k)\overline{u_k(w)},
\\
\bigg\langle K(\cdot, w), E_j\bigg(\cdot,\frac{1}{2}+{\rm i}r;n\bigg)\bigg\rangle= \widetilde{\Lambda}(r)\overline{E_j\bigg(w,\frac{1}{2}+{\rm i}r;n\bigg)},
\end{gather*}
where
\begin{gather*}\Lambda(\lambda_k)=\frac{1}{\lambda_k+s(s+2n-1)}-\frac{1}{\lambda_k+a(a+2n-1)},
\\
\widetilde{\Lambda}(r)=\frac{1}{\big(s+n-\frac{1}{2}\big)^2+r^2}-
\frac{1}{\big(a+n-\frac{1}{2}\big)^2+r^2}.
\end{gather*}
Therefore, the spectral decomposition of $K(z,w)$ is
\begin{gather*}
K(z,w)
= \sum_{k=0}^{\infty}\Lambda(\lambda_k)u_k(z) \overline{u_k(w)} +\frac{1}{4\pi}\sum_{j=1}^q\int_{-\infty}^{\infty} \widetilde{\Lambda}(r) E_j\bigg(z,\frac{1}{2}+{\rm i}r;n\bigg)\overline{ E_j\bigg(w,\frac{1}{2}+{\rm i}r;n\bigg)}{\rm d}r.
\end{gather*}
Setting $z=w$, we have
\begin{gather*}
K(z,z)= \sum_{k=0}^{\infty}\Lambda(\lambda_k)|u_k(z) |^2+\frac{1}{4\pi}\sum_{j=1}^q\int_{-\infty}^{\infty} \widetilde{\Lambda}(r)\bigg|E_j\bigg(z,\frac{1}{2}+{\rm i}r;n\bigg)\bigg|^2{\rm d}r.
\end{gather*}
Integrating over $X$, one would have
\begin{equation*}%\label{eq0903_7}
\sum_{k=0}^{\infty}\Lambda(\lambda_k)= \iint_X\Biggl(K(z,z)-\frac{1}{4\pi}\sum_{j=1}^q \int_{-\infty}^{\infty} \widetilde{\Lambda}(r)\bigg|E_j\bigg(z,\frac{1}{2}+{\rm i}r;n\bigg) \bigg|^2{\rm d}r \Bigg)y^{2n}{\rm d}\mu(z).
\end{equation*}
As in \cite{Iwaniec2002}, some regularizations are needed to make this integral well-defined. This will be explained in the following.

By definition \eqref{eq0903_2} and \eqref{eq0903_3}, $K(z,w)$ can be written as a series
\begin{gather*}
K(z,w)=\sum_{\gamma\in\Gamma}k(\gamma z,w) \gamma'(z)^n,
\end{gather*}where
\begin{gather*}k(z,w)=\left(\Psi_{n,s}(u(z,w))-\Psi_{n,a}(u(z,w))\right)\frac{(-4)^n}{(z-\bar{w})^{2n}},\end{gather*} with $\Phi_{n,s}(u)$ defined in \eqref{eq0903_1}. Decompose the elements of the group $\Gamma$ into the set that contains only the identity, and the three sets that contain respectively hyperbolic elements, elliptic elements and parabolic elements, we have
\begin{gather*}
K(z,w)=k(z,w)+\sum_{\substack{\gamma\in\Gamma\\\gamma \;\text{is hyperbolic}}}k(\gamma z,w)\gamma'(z)^n
+\sum_{\substack{\gamma\in\Gamma\\\gamma \;\text{is elliptic}}}k(\gamma z,w)\gamma'(z)^n
\\ \hphantom{K(z,w)=}
{}+\sum_{\substack{\gamma\in\Gamma\\\gamma \;\text{is parabolic}}}k(\gamma z,w)\gamma'(z)^n.
\end{gather*}
Therefore,
\begin{gather}\label{eq0903_6}
\sum_{k=0}^{\infty}\Lambda(\lambda_k)=\Xi_0+\Xi_H+\Xi_E+\Xi_P,
\end{gather}
where
\begin{gather*}
\Xi_0=\iint_X k(z,z)y^{2n}{\rm d}\mu(z),
\\
\Xi_H=\iint_X\sum_{\substack{\gamma\in\Gamma\\\gamma \;\text{is hyperbolic}}}k(\gamma z,z)\gamma'(z)^ny^{2n}{\rm d}\mu(z),
\\
\Xi_E=\iint_X \sum_{\substack{\gamma\in\Gamma\\\gamma \;\text{is elliptic}}}k(\gamma z,z)\gamma'(z)^ny^{2n}{\rm d}\mu(z).
\end{gather*}
The term $\Xi_P$ contains the parabolic contribution as well as the absolutely continuous spectrum. We need to do some regularization to make it finite.
Let $F$ be a fundamental domain of $X$ on~$\mathbb{H}$. Given $Y>0$, let
\begin{gather*}
F^Y=F\setminus \bigcup_{j=1}^qF_j^Y,
\end{gather*}
where
\begin{gather*}
F_j^Y=F\cap\sigma_j\big(\{x+{\rm i}y\mid y>Y\}\big).
\end{gather*}
Then we define
\begin{gather*}
\Xi_P=\lim_{Y\rightarrow\infty}\Bigg\{\iint_{F^Y} \sum_{\substack{\gamma\in\Gamma\\\gamma \;\text{is parabolic}}}k(\gamma z,w)\gamma'(z)^ny^{2n}{\rm d}\mu(z)
\\ \hphantom{\Xi_P=\lim_{Y\rightarrow\infty}\Bigg\{}
{}-\frac{1}{4\pi}\sum_{j=1}^q\int_{-\infty}^{\infty}\widetilde{\Lambda}(r)
\iint_{F^Y}\left|E_j\left(z,\frac{1}{2}+{\rm i}r;n\right) \right|^2y^{2n}{\rm d}\mu(z)\,{\rm d}r\Bigg\}.
\end{gather*}
In Appendix \ref{a3}, we show that this limit indeed exist.

 By \eqref{eq0903_13}, we have
\begin{gather*}
k(z,z)y^{2n}=-\frac{1}{4\pi}\bigl\{\psi(s+2n)+\psi(s)-\psi(a+2n)-\psi(a)\bigr\}.
\end{gather*}
Therefore,
\begin{gather*}
\Xi_0=-\frac{|X|}{4\pi}\bigl\{\psi(s+2n)+\psi(s)-\psi(a+2n)-\psi(a)\bigr\},
\end{gather*}
where $|X|$ is the hyperbolic area of $X$ given by
\begin{gather*}
|X|=2\pi\Bigg\{2g-2+q+\sum_{j=1}^v\bigg(1-\frac{1}{m_j}\bigg)\Bigg\}.
\end{gather*}
The computations of $\Xi_H$, $\Xi_E$ and $\Xi_P$ are much more complicated. We leave them to Appendices~\ref{a1},~\ref{a2} and~\ref{a3}, and quote the results here.

For the hyperbolic contributions, let $P$ be the primitive hyperbolic conjugacy classes in $\Gamma$. For a representative $\gamma$ of a primitive hyperbolic class, let $N(\gamma)>1$ be the multiplier of $\gamma$. Then $\Xi_H=\mathscr{E}_H(s)-\mathscr{E}_H(a)$, where
\begin{gather*}
\mathscr{E}_H(s)=\frac{1}{2s+2n-1}\sum_{[\gamma]\in P}\sum_{k=0}^{\infty}\frac{\log N(\gamma)}{N(\gamma)^{s+n+k}-1}.
\end{gather*}

 The elliptic contribution is given by $\Xi_E=\mathscr{E}_E(s)-\mathscr{E}_E(a)$, where
\begin{gather*}
\mathscr{E}_E(s)=\frac{1}{2s+2n-1}\sum_{j=1}^v\sum_{r=0}^{m_j-1} \bigg[\frac{2\alpha_{m_j}(r-n)+1-m_j}{2m_j^2}\psi\bigg(\frac{s+r}{m_j}\bigg)
\\ \hphantom{\mathscr{E}_E(s)=\frac{1}{2s+2n-1}\sum_{j=1}^v\sum_{r=0}^{m_j-1} \bigg[}
{}+\frac{2\alpha_{m_j}(r+n)+1-m_j}{2m_j^2}\psi\bigg(\frac{s+2n+r}{m_j}\bigg)\bigg].
\end{gather*}
If $m$ is a positive integer greater than 1, and $k$ is an integer, $\alpha_m(k)$ is defined to be the least positive residue modulo $m$. Namely, it is the smallest nonnegative integer congruent to~$k$ mo\-dulo~$m$.

Finally, we have $\Xi_P=\mathscr{E}_P(s)-\mathscr{E}_P(a)$, where
\begin{gather*}
\mathscr{E}_P(s)=\frac{1}{(2s+2n-1)^2} \bigg[q-\operatorname{Tr} \Phi\bigg(\frac{1}{2}\bigg) \bigg]+\frac{1}{2}\Sigma(s)
\\ \hphantom{\mathscr{E}_P(s)=}
{}+\frac{q}{2(2s+2n-1)}\bigg\{\psi(s)+\psi(s+2n)-2\log 2-2\psi\bigg(s+n+\frac{1}{2}\bigg)-2\psi(s+n)\bigg\},
\end{gather*}
where
\begin{gather*}
\Sigma(s)=\frac{1}{2\pi}\int_{-\infty}^{\infty}\frac{1}{r^2+\left(s+n-\frac{1}{2}\right)^2}
\frac{\varphi'}{\varphi}\bigg(\frac{1}{2}+{\rm i}r\bigg){\rm d}r.
\end{gather*}

Gathering all the results, we obtain the resolvent trace formula for $\Delta_n$.
\begin{Theorem}[the resolvent trace formula]\label{Resolvent Trace Formula} The resolvent trace formula of the $n$-Laplacian $\Delta_n$ on the Riemann surface $X$ is given by
\begin{gather*}
\mathscr{T}_S(s)-\mathscr{T}_S(a)=\mathscr{T}_G(s)-\mathscr{T}_G(a),
\end{gather*}
where $\mathscr{T}_S(s)$ and $\mathscr{T}_G(s)$ are respectively the spectral trace and the geometric trace given by
\begin{align*}
\mathscr{T}_S(s)=&\sum_{k=0}^{\infty} \frac{1}{\lambda_k+s(s+2n-1)}- \frac{1}{2} \Sigma(s),
\\
\mathscr{T}_G(s)
=&-\frac{|X|}{4\pi}\bigl[\psi(s+2n)+\psi(s)\bigr]+\frac{1}{2s+2n-1}\sum_{[\gamma]\in P}\sum_{k=0}^{\infty}\frac{\log N(\gamma)}{N(\gamma)^{s+n+k}-1}
\\
&+\frac{1}{2s+2n-1}
\sum_{j=1}^v\sum_{r=0}^{m_j-1}\bigg[\frac{2\alpha_{m_j}(r-n)+1-m_j}{2m_j^2} \psi\bigg(\frac{s+r}{m_j}\bigg)
\\
&+\frac{2\alpha_{m_j}(r+n)+1-m_j}{2m_j^2}\psi\bigg(\frac{s+2n+r}{m_j}\bigg)\bigg]
\\
&+\frac{q}{2(2s+2n-1)}\biggl[\psi(s)+\psi(s+2n)-2\log 2 -2\psi\bigg(s+n+\frac{1}{2}\bigg)-2\psi(s+n)\biggr]
\\
&+\frac{A}{(2s+2n-1)^2},
\end{align*}
and $A$ is the constant
\begin{gather*}
A=\operatorname{Tr}\bigg[I-\Phi\bigg(\frac{1}{2}\bigg)\bigg].
\end{gather*}
\end{Theorem}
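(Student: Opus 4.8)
The plan is to assemble the theorem from the pieces already in place. The starting point is the identity \eqref{eq0903_6},
\[
\sum_{k=0}^{\infty}\Lambda(\lambda_k)=\Xi_0+\Xi_H+\Xi_E+\Xi_P,
\]
in which $\Lambda(\lambda_k)=\frac{1}{\lambda_k+s(s+2n-1)}-\frac{1}{\lambda_k+a(a+2n-1)}$ and the four geometric contributions have the explicit evaluations recorded just above the theorem: $\Xi_0$ from the expansion \eqref{eq0903_13}, and $\Xi_H=\mathscr{E}_H(s)-\mathscr{E}_H(a)$, $\Xi_E=\mathscr{E}_E(s)-\mathscr{E}_E(a)$, $\Xi_P=\mathscr{E}_P(s)-\mathscr{E}_P(a)$ from Appendices~\ref{a1},~\ref{a2},~\ref{a3}. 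Throughout one takes $\operatorname{Re}s$ and $\operatorname{Re}a$ large enough that all the series and integrals converge and that $s(s+2n-1)$, $a(a+2n-1)$ avoid the spectrum; the resulting identity then extends by meromorphic continuation.

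First I would substitute these evaluations into \eqref{eq0903_6}. Since every term on the right-hand side has the shape $F(s)-F(a)$ while the left-hand side is $\sum_k\bigl[f_s(\lambda_k)-f_a(\lambda_k)\bigr]$ with $f_s(\lambda)=1/(\lambda+s(s+2n-1))$, the whole identity splits cleanly into an ``$s$-part'' minus the same expression with $s$ replaced by $a$. Next I would regroup the $s$-part: strip the summand $\tfrac12\Sigma(s)$ off $\mathscr{E}_P(s)$ and carry it over to the spectral side, so that the spectral side becomes $\mathscr{T}_S(s)-\mathscr{T}_S(a)$ with $\mathscr{T}_S(s)=\sum_k 1/(\lambda_k+s(s+2n-1))-\tfrac12\Sigma(s)$; and collect everything else — the $-\frac{|X|}{4\pi}\bigl[\psi(s+2n)+\psi(s)\bigr]$ coming from $\Xi_0$, the hyperbolic double sum from $\mathscr{E}_H(s)$, the double $\psi$-sum over ramification points from $\mathscr{E}_E(s)$, the block $\frac{q}{2(2s+2n-1)}\bigl\{\psi(s)+\psi(s+2n)-2\log 2-2\psi(s+n+\tfrac12)-2\psi(s+n)\bigr\}$ from $\mathscr{E}_P(s)$, and the residual $\frac{1}{(2s+2n-1)^2}\bigl[q-\operatorname{Tr}\Phi(\tfrac12)\bigr]$ from $\mathscr{E}_P(s)$ — into $\mathscr{T}_G(s)$. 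The elementary identity $q-\operatorname{Tr}\Phi(\tfrac12)=\operatorname{Tr}\bigl[I_q-\Phi(\tfrac12)\bigr]=A$ puts the last term of $\mathscr{T}_G(s)$ in the stated form, and \eqref{eq0903_6} now reads exactly $\mathscr{T}_S(s)-\mathscr{T}_S(a)=\mathscr{T}_G(s)-\mathscr{T}_G(a)$.

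This part of the argument is pure bookkeeping; the genuine analytic content — the evaluations of $\Xi_H$, $\Xi_E$ and the regularized $\Xi_P$, together with the proof that the limit defining $\Xi_P$ exists — is carried out in the appendices and used here as a black box. The only point that requires care is that the bare series $\sum_k 1/(\lambda_k+s(s+2n-1))$ diverges: convergence is restored only by the subtraction, because $\Lambda(\lambda_k)=O(\lambda_k^{-2})$ while the Weyl law forces $\lambda_k\to\infty$ linearly. Hence $\mathscr{T}_S(s)$ and $\mathscr{T}_G(s)$ are themselves formal expressions whose meaning resides entirely in the differences $\mathscr{T}_S(s)-\mathscr{T}_S(a)$ and $\mathscr{T}_G(s)-\mathscr{T}_G(a)$, and I expect no obstacle beyond keeping track of which term lands where and checking that the series defining $\mathscr{E}_H$ and the integral defining $\Sigma$ converge in the chosen half-planes, so that the term-by-term split is legitimate.
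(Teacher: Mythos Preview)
Your proposal is correct and matches the paper's approach exactly: the paper simply writes ``Gathering all the results, we obtain the resolvent trace formula for $\Delta_n$,'' so the theorem is precisely the bookkeeping you describe --- substitute the evaluations of $\Xi_0$, $\Xi_H$, $\Xi_E$, $\Xi_P$ into \eqref{eq0903_6}, move $\tfrac12\Sigma(s)$ to the spectral side, and recognise $q-\operatorname{Tr}\Phi(\tfrac12)=A$. Your remarks on the divergence of the bare eigenvalue sum and the formal nature of $\mathscr{T}_S(s)$, $\mathscr{T}_G(s)$ are apt and go slightly beyond what the paper makes explicit.
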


In \cite{Teo2020}, we have explained that $A$ is an even nonnegative integer. In applying the resolvent trace formula, we always take $a$ to be a positive constant that is large enough.

As mentioned in the introduction, $\Delta_n$ has the same spectrum as $D_n+n(n-1)$. Therefore,
$\Delta_n+s(s+2n-1)$ has the same spectrum as
\begin{gather*}
D_n+s(s+2n-1)+n(n-1)=D_n+(s+n)(s+n-1).
\end{gather*}
Hence, if we replace the $s$ in the resolvent trace formula for $D_n$ that were derived in \cite{Fischer1987, Hejhal1983} by~$s+n$, one should get the resolvent trace formula for $\Delta_n$ obtained in Theorem \ref{Resolvent Trace Formula}.

In principle, we can directly quote the formulas in \cite{Fischer1987, Hejhal1983} and proceed directly to derive the regularized determinants of $n$-Laplacians.
However, we have found some inconsistencies in the results in these two references. One of the goals of this work is to resolve this inconsistency, giving a concise reference to the computations that lead to the resolvent trace formula, supplementing the $n=0$ case done in the book \cite{Iwaniec2002}. When doing so, we find that it is more natural to consi\-der~$\Delta_n$ which is a matter of taste. The use of $\Delta_n+s(s+2n-1)$ instead of $\Delta_n+(s-n)(s+n-1)$ is so that setting $s=0$ in $\Delta_n+s(s+2n-1)$ give $\Delta_n$.

\section{Dimensions of spaces holomorphic differentials}

For $n\geq 1$, the space of holomorphic $n$-differentials is the subspace of $H_n^2(\Gamma)$ which are holomorphic. Hence, they are eigenvectors of $\Delta_n$ with eigenvalue 0. We also call them zero modes or holomorphic cusp forms of weight $2n$.

Let $d_n$ be the dimension of the space of holomorphic $n$-differentials of $X$. Although the formula for $d_n$ is well-known, we would like to show how this can be derived from the resolvent trace formula, similar to what have been done in \cite{Hejhal1983}.

When $n=0$, $d_0$ is the dimension of space of holomorphic functions, which is equal to 1, corresponding to the constant functions.

Notice that
the hyperbolic contribution to the resolvent trace formula can be written as
\begin{gather*}
\Xi_H= \frac{1}{2s+2n-1}\frac{\rm d}{{\rm d}s}\log Z(s+n)-\frac{1}{2a+2n-1}\frac{\rm d}{{\rm d}s}\log Z(a+n),
\end{gather*} where $Z(s)$ is the Selberg zeta function of $X$ defined as
\begin{gather*}
Z(s)=\prod_{[\gamma]\in P}\prod_{k=0}^{\infty}\big(1-N(\gamma)^{-s-k}\big).
\end{gather*}

It is well known that $Z(s)$ has a zero of order 1 at $s=1$ and it is absolutely convergent when $\operatorname{Re}s>1$. Therefore, when $n=1$, $\tfrac{\rm d}{{\rm d}s}\log Z(s+n)$ has residue 1 at $s=0$. When $n\geq 2$, $\tfrac{\rm d}{{\rm d}s}\log Z(s+n)$ has residue 0 at $s=0$.

As we explained above, the dimension of the space of holomorphic $n$-differentials $d_n$ is equal to the multiplicity of $0$ as a discrete eigenvalue of $\Delta_n$. According to Theorem~\ref{polescatter}, the residue of $\Sigma(s)$ at $s=0$ is 0. Hence, $d_n$ is equal to $(2n-1)$ times the residue at $s=0$ of the function on the left hand side of the resolvent trace formula. To determine the value of $d_n$ from the resolvent trace formula, we need to find the residue at $s=0$ of the function at the right-hand side of the trace formula.
It then amounts to understanding the zeros and poles of $Z(s)$ at positive integers, as well as the residues of $\psi(s)$ at rational numbers.

Since \begin{gather*}\psi(s)=-\gamma+\sum_{k=0}^{\infty}\left(\frac{1}{k+1}-\frac{1}{s+k}\right),\end{gather*} we find that the residues of $\psi(s)$ at $s=0, -1, -2, \dots$ are all equal to $-1$. $\psi(s)$ does not have poles at other points.

 Using the fact that $\alpha_{m_j}(-1)=m_j-1$, we find that
\begin{gather*}
d_1=\frac{1}{2}\Bigg[2g-2+q+\sum_{j=1}^v\bigg(1-\frac{1}{m_j}\bigg)\Bigg] +1-\frac{1}{2}\sum_{j=1}^v\bigg(1-\frac{1}{m_j}\bigg)-\frac{q}{2}=g,
\end{gather*}
which is a well-known result, since the space of holomorphic one-differentials is exactly the space of abelian differentials.

When $n\geq 2$,
\begin{gather*}
d_n=\frac{2n-1}{2}\Bigg[2g-2+q+\sum_{j=1}^v\bigg(1-\frac{1}{m_j}\bigg)\Bigg]
+\frac{1}{2}\sum_{j=1}^v \frac{m_j-1-2\alpha_{m_j}(-n)}{m_j}-\frac{q}{2}.
\end{gather*}
Since
\begin{gather*}
\frac{\alpha_{m_j}(-n)}{m_j}=-\frac{n}{m_j}-\bigg\lfloor{-}\frac{n}{m_j}\bigg\rfloor,
\end{gather*}
we find that when $n\geq 2$,
\begin{gather*}
d_n= (2n-1)(g-1)+(n-1)q+\sum_{j=1}^v\bigg( \bigg\lfloor n-\frac{n}{m_j}\bigg\rfloor\bigg).
\end{gather*}

\section[The determinant of n-Laplacian]{The determinant of $\boldsymbol n$-Laplacian}

In this section, we want to derive our main result -- the formula for the determinant of $n$-Laplacian
$\Delta_n$ for a cofinite Riemann surface $X$ in terms of the Selberg zeta function for the Riemann surface. This extends our result in \cite{Teo2020} to the case where $n\geq 1$. The results are not entirely new. For compact hyperbolic surfaces, the relation between the determinant of $\Delta_n$ and the Selberg zeta function have been obtained by D'Hoker and Phong \cite{DHokerPhong1986} and Sarnak \cite{Sarnak1987}. In~\cite{Efrat1988}, Efrat considered the determinant of $\Delta_0+s(s-1)$ for cofinite hyperbolic surfaces without elliptic points. For congruence subgroups $\Gamma_0(N)$, $\Gamma_1(N)$ and $\Gamma(N)$, Koyama has obtained the relation in his work \cite{Koyama1991, Koyama1991_2}. Gong has considered the more general case of Laplacian operators on automorphic forms of nonzero weights in \cite{Gong1995}, but this work has not attracted much attention.

We would first derive the determinant of $\Delta_n+s(s+2n-1)$.
Since $\Delta_n$ contains absolutely continuous spectrum, we need to be careful with defining the determinant. Following \cite{Efrat1988,Koyama1991_2, VenkovKalininFaddeev}, let
\begin{gather*}
\zeta(w,s)= \sum_{k=0}^{\infty}\frac{1}{(\lambda_k+s(s+2n-1))^w} -\frac{1}{4\pi}\int_{-\infty}^{\infty}
\frac{1}{\big[\big(s+n-\tfrac{1}{2}\big)^2+r^2 \big]^w} \frac{\varphi'}{\varphi}\bigg(\frac{1}{2}+{\rm i}r\bigg){\rm d}r
\end{gather*}
be the spectral zeta function of $X$ correspond to $\Delta_n+s(s+2n-1)$. This expression is well-defi\-ned when $\operatorname{Re}w$ is large enough.
 It can be analytically continued to a neighbourhood of~$w=0$. The~zeta regularized determinant $\det (\Delta_n+s(s+2n-1))$ is defined as
\begin{gather*}%\label{eq_det}
\det ((\Delta_n+s(s+2n-1))=\exp\left(-\zeta_w(0,s)\right).
\end{gather*}

By uniqueness of analytic continuation, we find that
\begin{gather*}%\label{eq1}
\frac{\rm d}{{\rm d}s}\frac{1}{2s+2n-1}\frac{\rm d}{{\rm d}s}\log \det (\Delta_n+s(s+2n-1))
= \frac{\rm d}{{\rm d}s}\frac{1}{2s+2n-1}\frac{\rm d}{{\rm d}s}(-\zeta_w(0,s))
\\ \qquad
{}= \frac{\rm d}{{\rm d}s}\Bigg\{\sum_{k=0}^{\infty}\bigg[\frac{1}{\lambda_k+s(s+2n-1)} -\frac{1}{\lambda_k+a(a+2n-1)}\bigg]
\\ \qquad\hphantom{= \frac{\rm d}{{\rm d}s}\Bigg\{}
{}-\frac{1}{4\pi}\int_{-\infty}^{\infty}
\bigg[\frac{1}{\big(s+n-\tfrac{1}{2}\big)^2+r^2 }-\frac{1}{\big(a+n-\tfrac{1}{2}\big)^2+r^2 }\bigg]\frac{\varphi'}{\varphi}\bigg(\frac{1}{2}+{\rm i}r\bigg){\rm d}r\Bigg\}.
\end{gather*}
To compute this using the resolvent trace formula, we first recall that the Alekseevskii--Barnes double gamma function $\Gamma_2(s)$ is defined as \cite{Alekseevskii1889, Barnes}:
\begin{gather*}
\Gamma_2(s+1)=\frac{1}{(2\pi)^{\frac{s}{2}}}{\rm e}^{\frac{s}{2}+\frac{\gamma+1}{2}s^2} \prod_{k=1}^{\infty}\bigg(1+\frac{s}{k}\bigg)^{-k}{\rm e}^{s-\frac{s^2}{2k} }.
\end{gather*}It satisfies the equation
\begin{gather*}
\Gamma_2(s+1)=\frac{\Gamma_2(s)}{\Gamma(s)}.
\end{gather*}
Using
\begin{gather*}
\frac{\rm d}{{\rm d}s}\log\Gamma_2(s+1)=-\frac{1}{2}\log(2\pi)+\frac{1}{2}+(\gamma+1)s -s\sum_{k=1}^{\infty}\bigg(\frac{1}{k}-\frac{1}{s+k}\bigg),
\end{gather*}and
\begin{gather*}\psi(s)=\frac{\rm d}{{\rm d}s}\log\Gamma(s) =-\gamma-\frac{1}{s}-\sum_{k=1}^{\infty}\bigg(\frac{1}{s+k}-\frac{1}{k}\bigg),
\end{gather*}
we find that
\begin{gather*}
\frac{\rm d}{{\rm d}s}\log\frac{(2\pi)^{2s+2n-1} \Gamma_2(s+2n)^2\Gamma_2(s)^2 \Gamma(s+2n)^{2n-1}}{\Gamma(s)^{2n+1}}
\\ \qquad
{}=-(2s+2n-1)(\psi(s+2n)+\psi(s)-2).
\end{gather*}
Hence,
the resolvent trace formula says that
\begin{gather*}
\frac{\rm d}{{\rm d}s}\frac{1}{2s+2n-1}\frac{\rm d}{{\rm d}s}\log\det(\Delta_n+s(s+2n-1))
\\ \qquad
{}=\frac{\rm d}{{\rm d}s}\frac{1}{2s+2n-1}\frac{\rm d}{{\rm d}s}\log\big[Z_{\infty}(s)Z(s+n)Z_{{\rm ell}(s)}Z_{{\rm par}}(s)\big],
\end{gather*}
where
\begin{align}
Z_{\infty}(s)&=\bigg[\frac{(2\pi)^{2s+2n-1} \Gamma_2(s+2n)^2\Gamma_2(s)^2 \Gamma(s+2n)^{2n-1}}{\Gamma(s)^{2n+1}}\bigg]^{\frac{|X|}{4\pi}},\nonumber
\\
Z_{{\rm ell}}(s)&=\prod_{j=1}^v \prod_{r=0}^{m_j-1}\Gamma\bigg(\frac{s+r}{m_j}\bigg)^{\frac{2\alpha_{m_j}(r-n)+1-m_j}{2m_j}}\Gamma \bigg(\frac{s+2n+r}{m_j}\bigg)^{\frac{2\alpha_{m_j}(r+n)+1-m_j}{2m_j}}\nonumber
\\
&=\prod_{j=1}^v\frac{\prod_{r=0}^{m_j-1}\Gamma \big(\frac{s+r}{m_j}\big)^{\frac{\alpha_{m_j}(r-n)}{m_j}}\Gamma \big(\frac{s+2n+r}{m_j}\big)^{\frac{\alpha_{m_j}(r+n)}{m_j}}}
{\bigl[(2\pi)^{m_j-1}m_j^{-(2s+2n-1)}\Gamma(s+2n)\Gamma(s)\bigr]^{\frac{1}{2}\big(1-\frac{1}{m_j}\big)}}, \nonumber
\\
Z_{{\rm par}}(s)&=\bigg[\frac{\Gamma(s)\Gamma(s+2n)}{2^{2s+2n-1}\Gamma(s+n)^2 \Gamma\big(s+n+\frac{1}{2}\big)^2}\bigg]^{q/2}\bigg(s+n-\frac{1}{2}\bigg)^{\frac{A}{2}}.
\label{various}
\end{align}
It follows that
\begin{gather*}
\det(\Delta_n+s(s+2n-1))= Z_{\infty}(s)Z(s+n)Z_{{\rm ell}(s)}Z_{{\rm par}}(s){\rm e}^{B\left(s+n-\frac{1}{2}\right)^2+D}
\end{gather*}
for some constants $B$ and $D$. To determine $B$ and $D$, we need to study the behavior of both sides when $s\rightarrow \infty$.

First we define the heat kernel
\begin{gather*}%\label{eq0903_9}
\theta(t)= \sum_{k=0}^{\infty}{\rm e}^{-t\lambda_k} -\frac{1}{4\pi}\int_{-\infty}^{\infty}{\rm e}^{-t\left(r^2+\left[n-\frac{1}{2}\right]^2\right)}\frac{\varphi'}{\varphi}\bigg(\frac{1}{2}+{\rm i}r\bigg){\rm d}r
\end{gather*}
for $t>0$. Notice that
\begin{gather*}
\zeta(w,s)=\frac{1}{\Gamma(w)}\int_0^{\infty}t^{w-1}\theta(t){\rm e}^{-t(s(s+2n-1))}{\rm d}t.
\end{gather*}
Although we can derive the explicit formula for $\theta(t)$ using the general trace formula, but we do not need it. We only need the asymptotic expansion of $\theta(t)$ as $t\rightarrow 0^+$. By comparison to the $n=0$ or the general theory about heat kernel, we know that the asymptotic expansion of $\theta(t)$ has the form
\begin{gather*}
\theta(t)= \bigg(\frac{b}{t}+\frac{c\log t}{\sqrt{t}}+\frac{d}{\sqrt{t}}+h\bigg){\rm e}^{-t\left(n-\frac{1}{2}\right)^2}+O\big(\sqrt{t}\big)
\end{gather*}
for some constants $b$, $c$, $d$ and $h$.
Let \begin{equation*}%\label{eq0903_5}
u=s+n-\frac{1}{2}.
\end{equation*}
Then as $u\rightarrow\infty$,
\begin{gather*}
\log\det(\Delta_n+s(s+2n-1))\sim -\frac{\pa}{\pa w}\bigg|_{w=0} \frac{1}{\Gamma(w)}\int_0^{\infty}t^{w-1}\bigg(\frac{b}{t}+\frac{c\log t}{\sqrt{t}}
+\frac{d}{\sqrt{t}}+h\bigg){\rm e}^{-tu^2}{\rm d}t.
\end{gather*}
Using
 \begin{gather*}
\frac{\pa}{\pa w}\bigg|_{w=0} \frac{1}{\Gamma(w)}\int_0^{\infty}t^{w-1}{\rm e}^{-tu^2}{\rm d}t= -2\log u,
\\
\frac{\pa}{\pa w}\bigg|_{w=0} \frac{1}{\Gamma(w)}\int_0^{\infty}t^{w-2}{\rm e}^{-tu^2}{\rm d}t=-u^2+2u^2\log u,
\\
\frac{\pa}{\pa w}\bigg|_{w=0}\frac{1}{\Gamma(w)}\int_0^{\infty}t^{w-\frac{3}{2}}{\rm e}^{-tu^2}{\rm d}t=-2\sqrt{\pi}u,
\\
\frac{\pa}{\pa w}\bigg|_{w=0} \frac{1}{\Gamma(w)}\int_0^{\infty}t^{w-\frac{3}{2}}\log t \;{\rm e}^{-tu^2}{\rm d}t=-2\sqrt{\pi}u(2-2\log 2-\gamma-2\log u),
\end{gather*}
we find that as $u\rightarrow\infty$,
\begin{gather*}
\log\det(\Delta_n+s(s+2n-1))=bu^2-2bu^2\log u +2c\sqrt{\pi}u(2-2\log 2-\gamma-2\log u)
\\ \hphantom{\log\det(\Delta_n+s(s+2n-1))=}
{}+2d\sqrt{\pi}u+2h\log u+o(1).
\end{gather*}
On the other hand, it is obvious from definition that $\log Z(s+n)$ is $o(1)$ as $s\rightarrow \infty$. From the asymptotic behaviors
\begin{gather}
\log\Gamma(s)
= \bigg(s-\frac{1}{2}\bigg) \log s - s +\frac{1}{2}\log2\pi+o(1),\nonumber
\\
\log\Gamma_2(s+1)
= -\frac{1}{2}s^2\log s+\frac{3}{4}s^2 -\frac{s}{2}\log(2\pi)+\frac{1}{12}\log s-\zeta'(-1)+o(1),
\label{gammabe}
\end{gather}
we can deduce the following.
As $s\rightarrow\infty$,
\begin{gather*}
\log Z_{\infty}(s)=\frac{|X|}{4\pi}\bigg\{\bigg({-}2u^2+2n^2-\frac{1}{6}\bigg)\log u +3u^2-4\zeta'(-1) \bigg\}+o(1),
\\
\log Z_{{\rm par}}(s)=\frac{q}{2}\{-(2u+1)\log u +2u-\log(2\pi)-2u\log 2\}+\frac{A}{2}\log u+o(1).
\end{gather*}
The asymptotic behavior of $Z_{{\rm ell}}(s)$ is the most complicated one.
We leave the computation to Appendix~\ref{Elliptic} and quote the result here. We find that as $s\rightarrow\infty$,
\begin{gather*}
\log Z_{{\rm ell}}(s)=\mathscr{B}\log u+\mathscr{D}+o(1),
\end{gather*}
where
\begin{gather*}
\mathscr{B}= \sum_{j=1}^v\bigg(\frac{m_j^2-1}{6m_j}-\frac{\alpha_{m_j}(n)(m_j-\alpha_{m_j}(n))}{m_j}\bigg),
\\
\mathscr{D}= -\sum_{j=1}^v\bigg(\frac{m_j^2-1}{6m_j}-\frac{\alpha_{m_j}(n)(m_j-\alpha_{m_j}(n))}{m_j}\bigg)\log m_j.
\end{gather*}
{\samepage
Hence, we find that as $s\rightarrow \infty$,
\begin{gather*}
\log\det(\Delta_n+s(2+2n-1))
\\ \qquad
{}= \frac{|X|}{4\pi}\bigg\{\bigg({-}2u^2+2n^2-\frac{1}{6}\bigg)\log u +3u^2-4\zeta'(-1) \bigg\} +\mathscr{B}\log u+\mathscr{D}
\\ \qquad\phantom{=}
{}+\frac{q}{2}\big\{{-}(2u+1)\log u +2u-\log(2\pi)-2u\log 2\big\}+\frac{A}{2}\log u+Bu^2+D+o(1)
\\ \qquad
{}=bu^2-2bu^2\log u+2c\sqrt{\pi}u(2-2\log 2-\gamma-2\log u) +2d\sqrt{\pi}u+2h\log u+o(1).
\end{gather*}

}\noindent
Comparing the two asymptotic expansions give
\begin{gather}
B= -\frac{|X|}{2\pi},\nonumber
\\
D= \frac{|X|}{\pi}\zeta'(-1)+\frac{q}{2}\log(2\pi) +\sum_{j=1}^v \bigg(\frac{m_j^2-1}{6m_j}-\frac{\alpha_{m_j}(n)(m_j-\alpha_{m_j}(n))}{m_j}\bigg)\log m_j.
 \label{constant}
\end{gather}
This gives the following result.
\begin{Theorem}\label{det2}
Let $n$ be a nonnegative integer and let $\Delta_n$ be the $n$-Laplacian of the Riemann surface $X$. Then
\begin{gather*}
\det\left(\Delta_n+s(s+2n-1)\right)=Z_{\infty}(s)Z(s+n)Z_{{\rm ell}(s)}Z_{{\rm par}}(s){\rm e}^{B\left(s+n-\frac{1}{2}\right)^2+D},
\end{gather*}where
$Z(s)$ is the Selberg zeta function, $Z_{\infty}(s)$, $Z_{{\rm ell}}(s)$ and $Z_{{\rm par}}(s)$ are defined in \eqref{various}, and the constants $B$ and $D$ are given by
\eqref{constant}.\end{Theorem}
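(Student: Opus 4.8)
The plan is to read the determinant off the resolvent trace formula after recognizing its geometric side as a logarithmic derivative, and then to fix the two remaining integration constants by a large-$s$ comparison. \textbf{Step 1.} Start from the definition $\log\det(\Delta_n+s(s+2n-1))=-\zeta_w(0,s)$, where $\zeta(w,s)$ is the spectral zeta function introduced above. Differentiating and using uniqueness of analytic continuation, the expression $\frac{\rm d}{{\rm d}s}\frac{1}{2s+2n-1}\frac{\rm d}{{\rm d}s}\log\det(\Delta_n+s(s+2n-1))$ is rewritten through the regularized spectral sum $\mathscr{T}_S(s)-\mathscr{T}_S(a)$, which by Theorem~\ref{Resolvent Trace Formula} equals $\mathscr{T}_G(s)-\mathscr{T}_G(a)$.

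\textbf{Step 2.} Identify each block of $\mathscr{T}_G(s)$ with $\frac{1}{2s+2n-1}\frac{\rm d}{{\rm d}s}\log$ of one of the factors in \eqref{various}: the hyperbolic double sum is $\frac{1}{2s+2n-1}\frac{\rm d}{{\rm d}s}\log Z(s+n)$ directly from the Euler product for the Selberg zeta function; the terms $-\frac{|X|}{4\pi}\big[\psi(s+2n)+\psi(s)\big]$ and the cusp $\psi$-terms are rewritten via $\psi=(\log\Gamma)'$ and the Alekseevskii--Barnes identity for $(\log\Gamma_2)'$ recorded above to produce $Z_\infty(s)$ and the gamma part of $Z_{\rm par}(s)$; the elliptic $\psi$-sum is matched term by term to $Z_{\rm ell}(s)$; and $\frac{A}{(2s+2n-1)^2}$ integrates to the factor $\big(s+n-\tfrac12\big)^{A/2}$. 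Consequently $\frac{\rm d}{{\rm d}s}\frac{1}{2s+2n-1}\frac{\rm d}{{\rm d}s}\log\det(\Delta_n+s(s+2n-1))=\frac{\rm d}{{\rm d}s}\frac{1}{2s+2n-1}\frac{\rm d}{{\rm d}s}\log\big[Z_\infty(s)Z(s+n)Z_{\rm ell}(s)Z_{\rm par}(s)\big]$. Since the operator $\frac{\rm d}{{\rm d}s}\frac{1}{2s+2n-1}\frac{\rm d}{{\rm d}s}$ annihilates exactly the functions $Bu^2+D$ with $u=s+n-\tfrac12$, integrating twice gives $\det(\Delta_n+s(s+2n-1))=Z_\infty(s)Z(s+n)Z_{\rm ell}(s)Z_{\rm par}(s)\,{\rm e}^{Bu^2+D}$ for some constants $B,D$.

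\textbf{Step 3.} Pin down $B$ and $D$ by matching the $u\to\infty$ asymptotics of both sides. On the spectral side, write $\zeta(w,s)$ as the Mellin transform $\frac{1}{\Gamma(w)}\int_0^\infty t^{w-1}\theta(t){\rm e}^{-ts(s+2n-1)}\,{\rm d}t$ of the heat trace, insert the general small-$t$ expansion $\theta(t)=\big(\tfrac bt+\tfrac{c\log t}{\sqrt t}+\tfrac d{\sqrt t}+h\big){\rm e}^{-t(n-1/2)^2}+O(\sqrt t)$, and evaluate $-\pa_w\big|_{w=0}$ term by term with the four elementary Mellin formulas listed above; this yields $\log\det(\Delta_n+s(s+2n-1))=bu^2-2bu^2\log u+2c\sqrt{\pi}\,u(2-2\log2-\gamma-2\log u)+2d\sqrt{\pi}\,u+2h\log u+o(1)$. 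On the geometric side, $\log Z(s+n)=o(1)$, the Stirling expansions \eqref{gammabe} of $\log\Gamma$ and $\log\Gamma_2$ give the stated asymptotics of $\log Z_\infty(s)$ and $\log Z_{\rm par}(s)$, and the asymptotics of $\log Z_{\rm ell}(s)$ come from the separate computation in Appendix~\ref{Elliptic}. Comparing the coefficients of $u^2\log u$, $u^2$, $u\log u$, $u$, $\log u$ and the constant term determines $b,c,d,h$ together with $B=-|X|/(2\pi)$ and the value of $D$ in \eqref{constant}.

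The hard part is Step~3, and within it the asymptotics of $Z_{\rm ell}(s)$: unlike the smooth and cusp factors, which reduce cleanly to Stirling, the elliptic factor is a product of powers of $\Gamma$ at the shifted arguments $(s+r)/m_j$ with exponents assembled from the least-positive-residue functions $\alpha_{m_j}$, so extracting its $\log u$-coefficient $\mathscr{B}$ and constant $\mathscr{D}$ requires a careful combinatorial computation — this is why it is relegated to the appendix. A secondary subtlety is checking that the continuous-spectrum regularizations appearing on the two sides — in the heat trace $\theta(t)$, in $\zeta(w,s)$, and in the definitions of $\Sigma(s)$ and of $\Xi_P$ — are mutually consistent, so that Theorem~\ref{Resolvent Trace Formula} can be fed into Step~1 without additional correction terms.
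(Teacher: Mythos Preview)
Your proposal is correct and follows essentially the same route as the paper: differentiate $\log\det$ and feed in the resolvent trace formula to recognize the geometric side as $\frac{1}{2s+2n-1}\frac{\rm d}{{\rm d}s}\log$ of the product $Z_\infty Z Z_{\rm ell} Z_{\rm par}$, integrate twice picking up $Bu^2+D$, and then fix $B$, $D$ by matching the large-$u$ asymptotics obtained from the heat-trace Mellin integral against the Stirling-type expansions of the factors (with $Z_{\rm ell}$ handled in Appendix~\ref{Elliptic}). Your identification of the elliptic asymptotics as the delicate step is exactly what the paper flags as well.
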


Since $\Delta_n$ has zero eigenvalues, we need to remove these zero modes when we define the regularized determininant of $\Delta_n$. When $n\geq 2$, $\varphi(s+n)$ is regular when $s=0$. Hence, a~rea\-so\-nable definition is
\begin{gather}\label{eq314_2}
\det\!^{\prime}\Delta_n=\lim_{s\rightarrow 0}\frac{\det(\Delta_n+s(s+2n-1))}{[s(s+2n-1)]^{d_n}}=\frac{1}{(2n-1)^{d_n}}\lim_{s\rightarrow 0}\frac{\det(\Delta_n+s(s+2n-1))}{s^{d_n}},
\end{gather}
where $d_n$ is the dimension of holomorphic $n$-differentials.
When $n=1$, Proposition \ref{polescatter} shows that we can also use \eqref{eq314_2} to define $\det'\Delta_1$.

The $n=0$ case is more complicated. As discussed in \cite{Venkov1982}, the possible zero of $\varphi(s)$ at $s=0$ would give some extra contribution. If $n_0$ is the order of zero of $\varphi(s)$ at $s=0$, then we should define
\begin{gather*}%\label{eq314_3}
\det\!^{\prime}\Delta_0= \lim_{s\rightarrow 0}\frac{\det(\Delta_0+s(s-1))}{[s(s -1)]^{1-n_0}} = (-1)^{n_0-1}\lim_{s\rightarrow 0}\frac{\det(\Delta_0+s(s-1))}{s^{1-n_0}}.
\end{gather*}

Now we want to derive the formula for $\det\!^{\prime}\Delta_n$ from Theorem \ref{det2}. We discuss the case $n=0$ and $n\geq 1$ separately.
When $n=0$, we find that when $s\rightarrow 0$,
\begin{gather*}
Z_{\infty}(s)\sim \bigg(\frac{1}{2\pi}\bigg)^{\frac{|X|}{4\pi}}s^{-\frac{|X|}{2\pi}},
\\
Z_{{\rm ell}}(s)\sim \prod_{j=1}^v s^{ \frac{m_j-1}{m_j} }m_j^{ \frac{1-m_j}{m_j}} \prod_{r=1}^{m_j-1}\Gamma\bigg(\frac{r}{m_j}\bigg)^{\frac{2r+1-m_j}{m_j}},
\\
Z_{{\rm par}}(s)\sim \bigg({-}\frac{1}{2}\bigg)^{\frac{A}{2}}\bigg(\frac{2}{\pi}\bigg)^{\frac{q}{2}}.
\end{gather*}These imply that
\begin{gather*}\det\!^{\prime}\Delta_0=\mathcal{C}_0\lim_{s\rightarrow \infty}\frac{Z(s)}{s^{2g-1+q-n_0}},
\end{gather*}
where
\begin{gather}\label{c0}
\mathcal{C}_0=(-1)^{\frac{A}{2}+1-n_0} 2^{ q-\frac{A}{2}}(2\pi)^{-\frac{q}{2}-\frac{|X|}{4\pi} }{\rm e}^{\frac{B}{4}+D}\prod_{j=1}^vm_j^{ \frac{1-m_j}{m_j}}\prod_{r=1}^{m_j-1}\Gamma\bigg(\frac{r}{m_j}\bigg)^{\frac{2r+1-m_j }{m_j}}.
\end{gather}

When $n\geq 1$, we find that as $s\rightarrow 0$,
\begin{gather*}
Z_{\infty}(s)\sim s^{(2n-1)\frac{|X|}{4\pi}} \bigl[(2\pi)^{2n-1}\Gamma_2(2n)^2\Gamma(2n)^{2n-1}\bigr]^{\frac{|X|}{4\pi}},
\\
Z_{{\rm ell}}(s)\sim \prod_{j=1}^v\Bigg\{s^{ \frac{m_j-1-2\alpha_{m_j}(-n)}{2m_j}}m_j^{ \frac{ 2\alpha_{m_j}(-n)+1-m_j}{2m_j}}
\\ \hphantom{Z_{{\rm ell}}(s)\sim \prod_{j=1}^v\Bigg\{}
{}\times
\prod_{r=1}^{m_j-1}\Gamma\bigg(\frac{r}{m_j}\bigg)^{\frac{2\alpha_{m_j}(r-n)+1-m_j}{2m_j}} \prod_{r=0}^{m_j-1}\Gamma\bigg(\frac{2n+r}{m_j}\bigg)^{\frac{2\alpha_{m_j}(r+n)+1-m_j}{2m_j}}\Bigg\},
\\
Z_{{\rm par}}(s)\sim s^{-\frac{q}{2}}\bigg[\frac{\Gamma(2n)}{2^{2n-1}\Gamma(n)^2 \Gamma\big(n+\frac{1}{2}\big)^2}\bigg]^{q/2}\bigg(n-\frac{1}{2}\bigg)^{\frac{A}{2}}.
\end{gather*}
Using also the fact that $Z(s)$ has a zero of order 1 at $s=1$, and $Z(n)$ is nonzero, we conclude that when $n=1$,
\begin{gather*}
\det\!^{\prime}\Delta_1=\mathcal{C}_1Z'(1),
\end{gather*}
while when $n\geq 2$,
\begin{gather*}
\det\!^{\prime}\Delta_n=\mathcal{C}_nZ(n).
\end{gather*}
The constant $\mathcal{C}_n$ is given by
\begin{gather}
\mathcal{C}_n= \bigl[ (2\pi)^{2n-1}\Gamma_2(2n)^2\Gamma(2n)^{2n-1}\bigr]^{\frac{|X|}{4\pi}}\nonumber
\\ \hphantom{\mathcal{C}_n=}
{}\times\prod_{j=1}^v\Bigg\{m_j^{ \frac{ 2\alpha_{m_j}(-n)+1-m_j}{2m_j}}
 \prod_{r=1}^{m_j-1}\Gamma\bigg(\frac{r}{m_j}\bigg)^{\frac{2\alpha_{m_j}(r-n)+1-m_j}{2m_j}} \prod_{r=0}^{m_j-1}\Gamma\bigg(\frac{2n+r}{m_j}\bigg)^{\frac{2\alpha_{m_j}(r+n)+1-m_j}{2m_j}}\Bigg\}
\nonumber
\\ \hphantom{\mathcal{C}_n=}
\times \bigg[\frac{2^{2n-1}}{\pi\Gamma(2n)}\bigg]^{q/2}(2n-1)^{-d_n} \bigg(n-\frac{1}{2}\bigg)^{\frac{A}{2}}{\rm e}^{B\left(n-\frac{1}{2}\right)^2+D}.\label{cn}
\end{gather}

Finally, we obtain the main result of our paper.
\begin{Theorem}\label{determinant}
When $n\geq 0$, the regularized determinant of the $n$-Laplacian $\Delta_n$ of $X$ is given~by \begin{gather*}
\det\!^{\prime}\Delta_n=\begin{cases}
\mathcal{C}_0Z_0, &n=0,
\\[.5ex]
\mathcal{C}_1Z'(1), &n=1,
\\[.5ex]
\mathcal{C}_nZ(n), &n\geq 2,
\end{cases}
\end{gather*}
where
\begin{gather*}
Z_0=\lim_{s\rightarrow 0}\frac{Z(s)}{s^{2g-1+q-n_0}},
\end{gather*}
$\mathcal{C}_0$ is given by \eqref{c0} and for $n\geq 1$, $\mathcal{C}_n$ is given by \eqref{cn}.
\end{Theorem}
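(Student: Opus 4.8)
The plan is to read Theorem~\ref{determinant} off from Theorem~\ref{det2}. One lets $s\to 0$ in the factorization $\det(\Delta_n+s(s+2n-1))=Z_\infty(s)Z(s+n)Z_{\rm ell}(s)Z_{\rm par}(s)e^{B(s+n-1/2)^2+D}$, using the leading $s\to 0$ asymptotics of $Z_\infty(s)$, $Z_{\rm ell}(s)$, $Z_{\rm par}(s)$ already recorded above together with the behavior of $Z(s+n)$ near $s=0$. Two things must be checked: that the order of vanishing of $\det(\Delta_n+s(s+2n-1))$ at $s=0$ equals exactly the exponent appearing in the denominator of the defining formula for $\det\!^{\prime}\Delta_n$, so that the regularizing limit is finite and nonzero; and that the $s$-independent prefactor which survives is precisely $\mathcal{C}_n$. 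The bulk of this is in fact already carried out in the paragraphs preceding the theorem, so the remaining task is mainly one of organization and of pinning down one additional fact about the Selberg zeta function.

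For $n\ge 2$ we have $Z(s+n)\to Z(n)\ne 0$ and $e^{B(s+n-1/2)^2+D}\to e^{B(n-1/2)^2+D}$, so the order of the zero of the determinant at $s=0$ is the sum of the exponents $(2n-1)\frac{|X|}{4\pi}$, $\sum_j\frac{m_j-1-2\alpha_{m_j}(-n)}{2m_j}$ and $-q/2$ contributed by $Z_\infty$, $Z_{\rm ell}$, $Z_{\rm par}$. Using $\frac{|X|}{4\pi}=\frac12[2g-2+q+\sum_j(1-1/m_j)]$ one checks that this sum equals $d_n$, the exponent built into \eqref{eq314_2}; hence the limit exists. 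Multiplying together the constants that remain in \eqref{various} — the factor $[(2\pi)^{2n-1}\Gamma_2(2n)^2\Gamma(2n)^{2n-1}]^{|X|/4\pi}$ from $Z_\infty$, the $\Gamma$-products from $Z_{\rm ell}$, the bracket and $(n-1/2)^{A/2}$ from $Z_{\rm par}$, and the $(2n-1)^{-d_n}$ and $e^{B(n-1/2)^2+D}$ of \eqref{eq314_2} — one recovers $\det\!^{\prime}\Delta_n=\mathcal{C}_nZ(n)$ with $\mathcal{C}_n$ as in \eqref{cn}. The case $n=1$ is the same except that $Z(s+1)=Z'(1)s+O(s^2)$ supplies one extra power of $s$; since $\alpha_{m_j}(-1)=m_j-1$ collapses the elliptic exponent to $-\sum_j(1-1/m_j)$, the total order of vanishing works out to $1+\frac12[2g-2+q]-q/2=g=d_1$, the use of \eqref{eq314_2} is legitimate (here Proposition~\ref{polescatter} is invoked), and one obtains $\det\!^{\prime}\Delta_1=\mathcal{C}_1Z'(1)$.

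The case $n=0$ requires one extra ingredient. In $\det(\Delta_0+s(s-1))=Z_\infty(s)Z(s)Z_{\rm ell}(s)Z_{\rm par}(s)e^{B(s-1/2)^2+D}$ the factors $Z_\infty$ and $Z_{\rm ell}$ together contribute $s^{-(2g-2+q)}$ as $s\to 0$ (their $\sum_j(1-1/m_j)$ terms cancel), while $Z_{\rm par}$ and the exponential tend to finite nonzero limits; for this to match the exponent $1-n_0$ built into the definition of $\det\!^{\prime}\Delta_0$, the Selberg zeta function $Z(s)$ must vanish to order $2g-1+q-n_0$ at $s=0$. This is the one fact not implied by the earlier computations: it is a classical property of the Selberg zeta function of a cofinite Fuchsian group, a consequence of the functional equation relating $Z(s)$ to the scattering determinant $\varphi(s)$ and of $\varphi$ having a zero of order $n_0$ at $s=0$ (see \cite{Venkov1982}). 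Granting it, $Z_0=\lim_{s\to 0}Z(s)/s^{2g-1+q-n_0}$ is finite and nonzero, and collecting the residual constants — the sign $(-1)^{A/2+1-n_0}$ coming from $(-1/2)^{A/2}$ in $Z_{\rm par}$ and from $[s(s-1)]^{1-n_0}=(-1)^{1-n_0}s^{1-n_0}(1+o(1))$, the powers of $2$ and $2\pi$, the $m_j$- and $\Gamma$-products of $Z_{\rm ell}$, and $e^{B/4+D}$ — reproduces $\mathcal{C}_0$ of \eqref{c0}.

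The main obstacle is therefore essentially localized in the $n=0$ case, namely the need to know $\operatorname{ord}_{s=0}Z(s)=2g-1+q-n_0$, which must be imported from the theory of the Selberg zeta function rather than derived from Theorem~\ref{det2}. For $n\ge 1$ the proof is a routine, if bookkeeping-heavy, matching of the orders of vanishing at $s=0$ and of the leftover multiplicative constants between Theorem~\ref{det2} and the definition \eqref{eq314_2}.
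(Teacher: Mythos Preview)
Your proposal is correct and follows essentially the same route as the paper: take Theorem~\ref{det2}, let $s\to 0$, and read off the constants from the $s\to 0$ asymptotics of $Z_\infty$, $Z_{\rm ell}$, $Z_{\rm par}$ already displayed before the theorem. One small slip: for $n=1$ the elliptic exponent is $-\tfrac12\sum_j(1-1/m_j)$, not $-\sum_j(1-1/m_j)$; this is exactly what is needed to cancel the $\tfrac12\sum_j(1-1/m_j)$ from $Z_\infty$, and your final count $1+\tfrac12[2g-2+q]-q/2=g$ is correct. Your observation that the $n=0$ case tacitly uses $\operatorname{ord}_{s=0}Z(s)=2g-1+q-n_0$ is a fair point---the paper simply packages this into the definition of $Z_0$ without comment.
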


This establishes the exact relation between the regularized determinant $\det\!'\Delta_n$ and the Selberg zeta function. Notice that the constant $\mathcal{C}_n$ only depends on the type of the Riemann surface and is independent of the moduli. In \cite{TakhtajanZograf1991} and \cite{TakhtajanZograf2019}, Takhtajan and Zograf established the local index theorem for Riemann surfaces with cusps and with ramification points, using $Z'(1)$ and $Z(n)$, $n\geq 2$, as defintions for $\det\!'\Delta_1$ and $\det\!'\Delta_n$, $n\geq 2$ respectively. Since they only considered the second variation of $\log\det\!'\Delta_n$ on the moduli space, Theorem \ref{determinant} justifies their approach. However, if one wants to consider the holomorphic factorization of the determinant of Laplacian, as considered in \cite{McintyreTakhtajan2006} for compact Riemann surfaces, the precise value of $\mathcal{C}_n$ becomes important. They would also be important for the extension of the work \cite{MontpletPippich2020} by Freixas i Montplet and von Pippich.\looseness=-1

We would like to point out that our definition of the regularized determinant of Laplacian has taken into account the contribution from the absolutely continuous spectrum. For a generic cofinite Riemann surface with cusps, it is not known whether it has an infinite discrete spectrum. In fact, it has been conjectured to be the opposite. The inclusion of the absolutely continuous spectrum is essential to render the determinant to be equal to a moduli independent constant times $Z(n)$ (for $n\geq 2$).

In the resolvent trace formula, the contribution from the absolutely continuous spectrum depends on $\varphi(s)$, the determinant of the scattering matrix, which is moduli dependent. It~would be interesting if one can compute the contribution of this term in the local index theorem explicitly.

\appendix
 \section{The inversion formulas}\label{ainversion}

 In this appendix, we want to derive a useful formula that is needed in the computation of the elliptic contribution to the trace equation~\eqref{eq0903_6} in Appendix~\ref{a2}.

 When
 \begin{gather*}
 s=\frac{1}{2}+{\rm i}r,
 \end{gather*}
 we find that
\begin{gather*}
-(s-n)(s+n-1)= r^2+\bigg(n-\frac{1}{2}\bigg)^2.
\end{gather*}
Given a function $\mathscr{H}(\lambda)$, define
\begin{gather*}
h(r)=\mathscr{H}\bigg(r^2+\bigg(n-\frac{1}{2}\bigg)^2\bigg).
\end{gather*}
In this work, we let $\mathscr{H}(\lambda)$ be a function such that the function $h(r)$ satisfies the following conditions:
\begin{itemize}\itemsep=0pt
\item $h(r)$ is holomorphic in the strip $|\operatorname{Im}(r)|\leq\frac{1}{2}+\delta$ for some $\delta>0$,
\item there exist positive constants $M$ and $\varepsilon$ such that $ |h(r)|\leq \frac{M}{(1+|r|)^{2+\varepsilon}}$ in the strip.
\end{itemize}
Using Proposition \ref{P1} with $k(z,w)$ the kernel of the operator $\mathscr{H}(\Delta_n)$, we find that
\begin{gather*}
\mathscr{H}(-(s-n)(s+n-1))
\\ \qquad
{}=\int_0^{\infty}\int_{-\infty}^{\infty}\Psi\bigg(\frac{x^2+(y-1)^2}{4y}\bigg) \frac{(-4)^n}{(x-{\rm i}(y+1))^{2n}}y^{s+n}\frac{{\rm d}x\,{\rm d}y}{y^2}
\\ \qquad
{}=2(-1)^n\int_0^{\infty}\int_{-\infty}^{\infty} \Psi\bigg(x^2+\frac{(y-1)^2}{4y}\bigg)\frac{1}{\bigg(x-{\rm i}\displaystyle\frac{(y+1)}{2\sqrt{y}}\bigg)^{2n}} y^{s-\frac{3}{2}}{\rm d}x\,{\rm d}y.
\end{gather*}
Let\vspace{-1ex}
\begin{gather*}
Q(v)=2(-1)^n\int_{-\infty}^{\infty} \Psi(x^2+v)\frac{1}{\big(x-{\rm i}\sqrt{v+1}\big)^{2n}} {\rm d}x
\\ \hphantom{Q(v)}
{}=2(-1)^n\int_{0}^{\infty} \Psi(x^2+v)\frac{\big(x+{\rm i}\sqrt{v+1}\big)^{2n}+\big(x-{\rm i}\sqrt{v+1}\big)^{2n}}{\big(x^2+1+v\big)^{2n}} {\rm d}x.
\end{gather*}
Then\vspace{-1ex}
\begin{gather*}
h(r)= \int_0^{\infty}Q\bigg(\frac{(y-1)^2}{4y}\bigg)y^{{\rm i}r-1}{\rm d}y= \int_{-\infty}^{\infty}Q\bigg(\sinh^2\frac{t}{2}\bigg) {\rm e}^{{\rm i}tr} {\rm d}t.
\end{gather*}
Define\vspace{-1ex}
\begin{gather*}
g(t)=Q\bigg(\sinh^2\frac{t}{2}\bigg).
\end{gather*}
Then\vspace{-1ex}
\begin{gather*}
h(r)=\int_{-\infty}^{\infty} g(t){\rm e}^{{\rm i}tr}{\rm d}t.
\end{gather*}
Namely, $h(r)$ is the Fourier transform of $g(t)$.
The theory of Fourier transform implies that\vspace{-1ex}
\begin{gather*}
g(t)=\frac{1}{2\pi}\int_{-\infty}^{\infty} h(r) {\rm e}^{-{\rm i}rt}{\rm d}r.
\end{gather*}
When\vspace{-1ex}
\begin{gather*}
\Psi(u)=\Psi_{n,s}(u)-\Psi_{n,a}(u),
\end{gather*}
where $\Psi_{n,s}(u)$ is defined in \eqref{eq0903_1}, we have\vspace{-1ex}
\begin{align*}
h(r)&=\frac{1}{r^2+\left(n-\tfrac{1}{2}\right)^2+s(s+2n-1)}-\frac{1}{r^2+\left(n-\tfrac{1}{2}\right)^2+a(a+2n-1)}
\\
&=\frac{1}{r^2+\left(s+n-\tfrac{1}{2}\right)^2 }-\frac{1}{r^2+\left(a+n-\tfrac{1}{2}\right)^2 }.
\end{align*}
It is well-known that $h(r)$ is the Fourier transform of the function
\begin{gather*}
g(t)=\frac{1}{2s+2n-1}{\rm e}^{-t\left(s+n-\frac{1}{2}\right)}-\frac{1}{2a+2n-1}{\rm e}^{-t\left(a+n-\frac{1}{2}\right)}.
\end{gather*}
Next we want to express $\Psi(u)$ in terms of $Q(v)$ in the particular case where $\Psi(u)=\Psi_{n,s}(u)-\Psi_{n,a}(u)$.
Inspired by the formula in \cite{Hejhal1976}, we claim that
\begin{align*}
 \Psi(x) &=-\frac{1}{2\pi}\int_{-\infty}^{\infty} Q'\big(x+t^2\big) \big(\sqrt{x+1+t^2}-t\big)^{2n}{\rm d}t
 \\
 &=-\frac{1}{2\pi}\int_0^{\infty} Q'\big(x+t^2\big)\left[\big(\sqrt{x+1+t^2}-t\big)^{2n}+\big(\sqrt{x+1+t^2}+t\big)^{2n}\right]{\rm d}t.
 \end{align*}
Since $\Psi_{n,s}(u)$ is a linear combination of terms of the form
\begin{gather*}
\frac{1}{(u+1)^{\alpha}},
\end{gather*}
it suffices to consider the case where
\begin{gather*}
\Psi(x)=\frac{1}{(x+1)^{\alpha}}.
\end{gather*}
 In this case,
\begin{align*}
Q(v)&=4(-1)^n \int_{0}^{\infty}\frac{1}{(x^2+v+1)^{\alpha+2n}}\sum_{r=0}^n
\begin{pmatrix} 2n\\2r\end{pmatrix}
(v+1)^{n-r}(-1)^{n-r} x^{2r}{\rm d}x
\\ \hphantom{}
&=2\sum_{r=0}^n \begin{pmatrix} 2n\\2r\end{pmatrix}
(v+1)^{n-r}(-1)^{r} \int_0^{\infty}\frac{x^{r-\frac{1}{2}}}{(x+v+1)^{\alpha+2n}}{\rm d}x
\\
&=\frac{2}{(v+1)^{\alpha+n-\frac{1}{2}}}\sum_{r=0}^n
\begin{pmatrix} 2n\\2r\end{pmatrix}
(-1)^r\frac{\Gamma\left(r+\frac{1}{2}\right)\Gamma\left(\alpha+2n-r-\frac{1}{2}\right)}{\Gamma(\alpha+2n)}.
\end{align*}
Some manipulations give
\begin{align*}
Q(v)
&= \frac{2}{(v+1)^{\alpha+n-\frac{1}{2}}}\frac{\Gamma\left(\alpha+2n -\frac{1}{2}\right)}{\Gamma(\alpha+2n)}\Gamma\left(\frac{1}{2}\right)
\sum_{r=0}^n\frac{(-n)_r\left(-n+\frac{1}{2}\right)_r}{\left(\frac{1}{2}\right)_r r!}\frac{\left(\frac{1}{2}\right)_r}{\left(-\alpha-2n+\frac{3}{2}\right)_r}
\\
&=\frac{2\sqrt{\pi}}{(v+1)^{\alpha+n-\frac{1}{2}}}\frac{\Gamma\left(\alpha+2n -\frac{1}{2}\right)}{\Gamma(\alpha+2n)}\;_2F_1\left(\begin{matrix} -n,\,-n+\frac{1}{2}\\[.5ex]
-\alpha-2n+\frac{3}{2}\end{matrix}\,;\,1\right)
\\
&=\frac{2\sqrt{\pi}}{(v+1)^{\alpha+n-\frac{1}{2}}}\frac{\Gamma\left(\alpha+2n -\frac{1}{2}\right)}{\Gamma(\alpha+2n)} \frac{\Gamma\left(-\alpha-2n+\frac{3}{2}\right)\Gamma\left(-\alpha+1\right)}{\Gamma\left(-\alpha- n+1\right)\Gamma\left(-\alpha- n+\frac{3}{2}\right)}
\\
&=\frac{2\sqrt{\pi}}{(v+1)^{\alpha+n-\frac{1}{2}}}\frac{\Gamma(\alpha+n)\Gamma\left(\alpha+n -\frac{1}{2}\right)}{\Gamma(\alpha)\Gamma(\alpha+2n)}.
\end{align*}
It follows that
\begin{gather*}
Q'(v)=-\frac{2\sqrt{\pi}}{(v+1)^{\alpha+n+\frac{1}{2}}}\frac{\Gamma(\alpha+n)\Gamma
\left(\alpha+n +\frac{1}{2}\right)}{\Gamma(\alpha)\Gamma(\alpha+2n)}.
\end{gather*}
Next, we compute
\begin{align*}
J(x)&= \int_{-\infty}^{\infty} \frac{1}{\big(x+1+t^2 \big)^{\alpha+n+\frac{1}{2}}}\big(\sqrt{x+1+t^2}-t\big)^{2n}{\rm d}t
\\
&=2\int_0^{\infty}\frac{1}{\big(x+1+t^2 \big)^{\alpha+n+\frac{1}{2}}}\sum_{r=0}^n\begin{pmatrix} 2n\\2r\end{pmatrix} (x+1+t^2)^{n-r} t^{2r}{\rm d}t
\\
&=2\sum_{r=0}^n\begin{pmatrix} 2n\\2r\end{pmatrix}\int_0^{\infty}\frac{t^{2r}}{\big(x+1+t^2 \big)^{\alpha+r+\frac{1}{2}}} {\rm d}t
\\
&= \frac{1}{(x+1)^{\alpha}}\sum_{r=0}^n\frac{(-n)_r\left(-n+\frac{1}{2}\right)_r} {\left(\frac{1}{2}\right)_rr!}\frac{\Gamma\left(r+\frac{1}{2}\right)\Gamma\left(\alpha\right)}
{\Gamma\left(\alpha+r+\frac{1}{2}\right)}.
\end{align*}
It follows that
\begin{align*}
J(x)&= \frac{\Gamma(\alpha)}{\Gamma\left(\alpha+\frac{1}{2}\right)} \frac{\sqrt{\pi}}{(x+1)^{\alpha}}\sum_{r=0}^n\frac{(-n)_r\left(-n+\frac{1}{2}\right)_r}{ r!}\frac{ 1}
{ \left(\alpha +\frac{1}{2}\right)_r}
\\
& = \frac{\Gamma(\alpha)}{\Gamma\left(\alpha+\frac{1}{2}\right)} \frac{\sqrt{\pi}}{(x+1)^{\alpha}}\;_2F_1\left(\begin{matrix} -n,\,-n+\frac{1}{2}
\\
\alpha +\frac{1}{2}\end{matrix}\,;\,1\right)
\\
&= \frac{\Gamma(\alpha)}{\Gamma\left(\alpha+\frac{1}{2}\right)} \frac{\sqrt{\pi}}{(x+1)^{\alpha}}\frac{\Gamma\left(\alpha+\frac{1}{2}\right)\Gamma(\alpha+2n)}
{\Gamma(\alpha+n)\Gamma\left(\alpha+n+\frac{1}{2}\right)}.
\end{align*}
This proves that for
\begin{gather*}
\Psi(x)=\frac{1}{(x+1)^{\alpha}},
\end{gather*}
the inversion formula holds. Namely, if
\begin{gather*}
Q(v)=2(-1)^n\int_{-\infty}^{\infty} \Psi\big(x^2+v\big)\frac{1}{\big(x-{\rm i}\sqrt{v+1}\big)^{2n}} {\rm d}x,
\end{gather*}
then
\begin{gather*}
 \Psi(x) =-\frac{1}{2\pi}\int_{-\infty}^{\infty} Q'\big(x+t^2\big) \big(\sqrt{x+1+t^2}-t\big)^{2n}{\rm d}t.
 \end{gather*}
 Thus, this formula works for $\Psi(u)=\Psi_{n,s}(u)-\Psi_{n,a}(u)$.

 \section{The hyperbolic contribution}\label{a1}
 In this section, we want to compute the hyperbolic contribution
\begin{gather*}
\Xi_H=\sum_{\substack{\gamma\in\Gamma\\\gamma \;\text{is hyperbolic}}}\iint_{\Gamma\backslash\mathbb{H}}k(\gamma z,z)\gamma'(z)^ny^{2n}{\rm d}\mu(z)
\end{gather*}
to the trace equation~\eqref{eq0903_6}.

For any $\gamma\in \Gamma$, let
\begin{gather*}
\Gamma_{\gamma}=\big\{\alpha\in\Gamma\mid \alpha\gamma\alpha^{-1}=\gamma\big\}.
\end{gather*}
It is a cyclic subgroup of $\Gamma$ generated by some $\gamma_0\in\Gamma$ such that $\gamma=\gamma_0^{\ell}$ for some nonzero integer~$\ell$.

The elements in $\Gamma$ are partitioned into conjugacy classes. The conjugacy class of $\gamma$ consists of all elements in $\Gamma$ of the form $\alpha\gamma\alpha^{-1}$, when $\alpha$ runs through all elements in $\Gamma$. Two ele\-ments~$\alpha$ and~$\beta$ in $\Gamma$ define the same element in the class containing $\gamma$, namely,
 \begin{gather*}
 \alpha\gamma\alpha^{-1}=\beta\gamma\beta^{-1},
 \end{gather*}
 if and only if $\alpha^{-1}\beta\in \Gamma_{\gamma}$.

Let $\gamma\in\Gamma$ be a hyperbolic element.
There exists $\sigma\in {\rm PSL}(2,\mathbb{R})$ and a real number $\lambda_{\gamma}>1$ such that
 \begin{gather*}
 \sigma^{-1} \gamma\sigma=D_{\gamma}=
 \begin{pmatrix} \lambda_{\gamma}^{1/2} & 0 \\0 &\lambda_{\gamma}^{-1/2}\end{pmatrix}\!.
 \end{gather*}
 $\lambda_{\gamma}$ is called the multiplier of $\gamma$.
Each hyperbolic element $\gamma$ in $\Gamma$ belongs to a conjugacy class. All elements in the same conjugacy class has the same multiplier. There exists $\gamma_0\in \Gamma$ such that $\gamma=\gamma_0^{\ell}$ for some nonzero integer $\ell$, and $\Gamma_{\gamma}$ is generated by $\gamma_0$. $\gamma_0$ is called a primitive hyperbolic element.

Let $P$ be the set of conjugacy classes of primitive hyperbolic elements in $\Gamma$. Then the set of hyperbolic elements in $\Gamma$ can be written as
\begin{gather*}
\bigcup_{[\gamma_0]\in P}\bigcup_{\ell=1}^{\infty}\big\{ \alpha\gamma_0^{\ell}\alpha^{-1}\mid \alpha\in \Gamma_{\gamma_0}\backslash\Gamma \big\}.
\end{gather*}

 Given $\gamma_0$ a representative of a primmitive hyperbolic conjugacy class, and $\ell$ a positive integer, let $\gamma=\gamma_0^{\ell}$. We first consider the integral
\begin{gather*}
I_H= \sum_{\alpha\in \Gamma_{\gamma}\backslash\Gamma}\iint_Fk\big(\alpha\gamma\alpha^{-1} z,z\big)\left[(\alpha\gamma\alpha)'(z)\right]^ny^{2n}{\rm d}\mu(z).\end{gather*}Using the invariant property of the kernel $k(z,w)$, we find that
\begin{gather*}
I_H
= \iint_{ \sigma^{-1}\left(\Gamma_{\gamma}\backslash\mathbb{H}\right)}k( \lambda_{\gamma} z, z)\lambda_{\gamma}^n y^{2n} {\rm d}\mu(z).
\end{gather*}
In the following, we abbreviate $\lambda_{\gamma}$ as $\lambda$.
 Let $\lambda=\lambda_0^{\ell}$. Then
\begin{gather*}\sigma^{-1}\left(\Gamma_{\gamma}\backslash\mathbb{H}\right)=\left\{x+{\rm i}y\mid 1\leq y\leq \lambda_0\right\}.\end{gather*}
It follows that
\begin{align*}
I_H&=\lambda^n\int_1^{\lambda_0}\int_{-\infty}^{\infty}k( \lambda z, z)y^{2n}{\rm d}x \frac{{\rm d}y}{y^2}
\\
&= \lambda^n\int_1^{\lambda_0}\int_{-\infty}^{\infty}\Psi \bigg(\frac{(\lambda-1)^2}{4\lambda}\big(x^2+1\big)\bigg)\frac{(-4)^n}{[(\lambda-1)x+{\rm i}(\lambda+1)]^{2n}}{\rm d}x \frac{{\rm d}y}{y}
\\
&=\frac{2(-1)^n\lambda^{\frac{1}{2}}}{\lambda-1}\ln\lambda_0\int_{-\infty}^{\infty}
\Psi\bigg(x^2+\frac{(\lambda-1)^2}{4\lambda} \bigg)\frac{1}{\big[x+{\rm i}\tfrac{\lambda+1}{2\sqrt{\lambda}}\big]^{2n}}{\rm d}x
\\
&=\frac{\sqrt{\lambda}}{\lambda-1}\ln\lambda_0 \; g(\ln\lambda)
=\frac{1}{2s+2n-1}\frac{\lambda^{-s-n}}{1-\lambda^{-1}}\ln\lambda_0 -\frac{1}{2a+2n-1}\frac{\lambda^{-a-n}}{1-\lambda^{-1}}\ln\lambda_0.
\end{align*}
Now we can sum up the contributions from $\gamma=\gamma_0^{\ell}$, where $\ell$ ranges from 1 to $\infty$. To avoid confusion with the eigenvalues of Laplacian, we denote $\lambda_{\gamma}$ by $N(\gamma)$.
Since
\begin{gather*}
\sum_{\ell=1}^{\infty}\frac{\lambda^{-\ell(s+n)}}{1-\lambda^{-\ell}}
=\sum_{k=0}^{\infty}\sum_{\ell=1}^{\infty}\lambda^{-\ell(s+n+k)}
=\sum_{k=0}^{\infty}\frac{1}{\lambda^{s+k+n}-1},
\end{gather*}
we find that $\Xi_H=\mathscr{E}_H(s)-\mathscr{E}_H(a)$, where
\begin{gather*}
\mathscr{E}_H(s)= \frac{1}{2s+2n-1}\sum_{[\gamma]\in P}\sum_{k=0}^{\infty}\frac{N(\gamma)^{-(s+n+k)}}{1-N(\gamma)^{-(s+n+k)}}\ln N(\gamma).
\end{gather*}

\section{Elliptic contribution}\label{a2}
 In this appendix, we compute the elliptic contribution
 \begin{gather*}
\Xi_E=\sum_{\substack{\gamma\in\Gamma\\\gamma \;\text{is elliptic}}}\iint_{\Gamma\backslash\mathbb{H}}k(\gamma z,z)\gamma'(z)^ny^{2n}{\rm d}\mu(z)
\end{gather*}to the trace equation~\eqref{eq0903_6}.

For simplicity, let us just do the calculation with \begin{gather*}\Psi(u)=\Psi_{n,s}(u),\end{gather*} understanding that the singularity at $u=0$ is eliminated by subtracting $\Psi_{n,a}(u)$. At the end, we will restore the result with \begin{gather*}\Psi(u)=\Psi_{n,s}(u)-\Psi_{n,a}(u).\end{gather*}
Recall that the Riemann surface $X$ has $v$ ramification points corresponding to the $v$ elliptic elements $\tau_1, \dots, \tau_v$ of orders $m_1, \dots, m_v$ respectively. For each $1\leq j\leq v$, there is a
$\sigma_j\in{\rm PSL}(2,\mathbb{R})$ such that
\begin{gather*}
\sigma_j^{-1}\tau_j\sigma_j=
\begin{pmatrix} \cos\theta_j & -\sin\theta_j\\ \sin\theta_j & \cos\theta_j\end{pmatrix}\!,
\end{gather*}
where $\theta_j=\frac{\pi}{m_j}$. If $\gamma$ is an elliptic element of $\Gamma$, then there exists $\alpha\in \Gamma$, $1\leq j\leq v$ and an integer $1\leq \ell\leq m_j-1$ so that
\begin{gather*}
\alpha^{-1}\gamma\alpha=\tau_j^{\ell}.
\end{gather*}
This implies that
\begin{align*}
\Xi_E&=\sum_{j=1}^v\sum_{\ell=1}^{m_j-1}\sum_{ \alpha\in\Gamma_{\tau_j}\backslash\Gamma} \iint_{\Gamma\backslash\mathbb{H}}k\big(\alpha \tau_j^{\ell}\alpha^{-1} z,z\big)\big[\big(\alpha\tau_j^{\ell}\alpha^{-1}\big)'(z)\big]^ny^{2n}{\rm d}\mu(z)
\\
 &=\sum_{j=1}^v\sum_{\ell=1}^{m_j-1} \iint_{\Gamma_{\tau_j}\backslash\mathbb{H}}k\big( \tau_j^{\ell} z,z\big)\big(\tau_j^{\ell}\big)'(z)^ny^{2n}{\rm d}\mu(z)
 =\sum_{j=1}^v\sum_{\ell=1}^{m_j-1} I_{E, j}(\theta_{j, \ell}),
 \end{align*}
 where $\theta_{j,\ell}= \frac{\pi \ell}{m_j}$, and
\begin{gather*}
 I_{E}(\theta)= \frac{1}{m_j}\iint_{ \mathbb{H}}k(R_{\theta} z,z)R_{\theta}'(z)^ny^{2n}{\rm d}\mu(z),
 \\[.5ex]
 R_{\theta}= \begin{pmatrix} \cos\theta & -\sin\theta \\ \sin\theta & \cos\theta \end{pmatrix}\!.
\end{gather*}
In the following, we omit the subscript $j$ in our calculation of $I_{E,j}$.
Now,
\begin{gather*}
u(R_{\theta}z,z)+1=\frac{\sin^2\theta\big(1+x^2+y^2\big)^2+4\cos^2\theta y^2}{4y^2},
\\
(-4)^n\frac{R_{\theta}'(z)^n}{(R_{\theta}z-\bar{z})^{2n}}
=\frac{(-4)^n}{\big(\sin\theta \big(1+x^2+y^2\big)-2{\rm i}y\cos\theta\big)^{2n}}.
\end{gather*}
With
\begin{gather*}
t=\frac{1+x^2+y^2}{2y},
\end{gather*}
we have
\begin{gather*}
x=\sqrt{2yt-1-y^2},\qquad
{\rm d}x=\frac{y\,{\rm d}t}{\sqrt{2yt-1-y^2}}.
\end{gather*}
Then
\begin{align*}
I_{E}(\theta)&=(-1)^n\frac{2}{m}\int_0^{\infty}\int_{\frac{1+y^2}{2y}}^{\infty}\frac{\Psi \big( t^2\sin^2\theta -\sin^2\theta \big)}{(t\sin\theta-{\rm i}\cos\theta)^{2n}}\frac{{\rm d}t}{\sqrt{2yt-1-y^2}}\frac{{\rm d}y}{y}
\\
&=(-1)^n\frac{2}{m}\int_1^{\infty}\int_{t-\sqrt{t^2-1}}^{t+\sqrt{t^2-1}}\frac{{\rm d}y}{y\sqrt{2yt-1-y^2}}\frac{\Psi \big(t^2\sin^2\theta -\sin^2\theta\big)}{(t\sin\theta-{\rm i}\cos\theta)^{2n}}{\rm d}t.
\end{align*}It has been computed that
\begin{gather*}
\int_{t-\sqrt{t^2-1}}^{t+\sqrt{t^2-1}}\frac{{\rm d}y}{y\sqrt{2yt-1-y^2}}= \pi.
\end{gather*}
Therefore,
\begin{gather*}
I_{E}(\theta)
= (-1)^n\frac{2\pi}{m}\int_1^{\infty}
\frac{\Psi\big(t^2\sin^2\theta -\sin^2\theta\big)}{\big(t\sin\theta-{\rm i}\cos\theta\big)^{2n}}{\rm d}t.
\end{gather*}
This integral is difficult to compute. Fischer \cite{Fischer1987} used the power series expansion of $\Psi_{n,s}(u)$ followed by a lot of tedious calculations. The generalization from $n=0$ to general positive $n$ is not easy, and Hejhal was unable to proceed in his first volume~\cite{Hejhal1976}; only succeeded to circumvent the problem in his second volume~\cite{Hejhal1983}.
 We will use the method of Hejhal~\cite{Hejhal1983}.

 First of all, using the substitution $u=t^2\sin^2\theta-\sin^2\theta$, we find that
 \begin{gather*}
I_{E}(\theta)
= (-1)^n\frac{ \pi}{m\sin\theta}\int_0^{\infty} \frac{\Psi(u)}{\big(\sqrt{u+\sin^2\theta}-{\rm i}\cos\theta\big)^{2n}} \frac{{\rm d}u}{\sqrt{u+\sin^2\theta}}.
\end{gather*}
Using the inversion formula (see Appendix~\ref{ainversion})
\begin{gather*}
\Psi (u)= -\frac{1}{2\pi}\int_{-\infty}^{\infty}Q'\big(u+t^2\big)\big(\sqrt{u+1+t^2}-t\big)^{2n} {\rm d}t,
\end{gather*}
we find that
\begin{align*}
I_{E}(\theta)
&=\frac{(-1)^{n+1}}{2m\sin\theta}\int_0^{\infty}\int_{-\infty}^{\infty} Q'\big(u+t^2\big)\frac{\big(\sqrt{u+1+t^2}-t\big)^{2n}}{\big(\sqrt{u+\sin^2\theta}-{\rm i}\cos\theta\big)^{2n}} {\rm d}t\frac{{\rm d}u}{\sqrt{u+\sin^2\theta}}
\\
&=\frac{(-1)^{n+1}}{2m\sin\theta}\int_{-\infty}^{\infty}\int_{t^2}^{\infty} Q'(v)\frac{\big(\sqrt{v+1}-t\big)^{2n}}{\big(\sqrt{v-t^2+\sin^2\theta}-{\rm i}\cos\theta\big)^{2n}} \frac{{\rm d}v}{\sqrt{v-t^2+\sin^2\theta}}{\rm d}t
\\
&=\frac{(-1)^{n+1}}{2m\sin\theta}\int_{0}^{\infty}Q'(v)\mathscr{H}(v)\,{\rm d}v,
\end{align*}
where
\begin{gather*}
\mathscr{H}(v)=\int_{-\sqrt{v}}^{\sqrt{v}} \frac{\big(\sqrt{v+1}-t\big)^{2n}}{\big(\sqrt{v-t^2+\sin^2\theta}-{\rm i}\cos\theta\big)^{2n}} \frac{{\rm d}t}{\sqrt{v-t^2+\sin^2\theta}}.
\end{gather*}
Making a change of variables
\begin{gather*}
t=\sqrt{v+\sin^2\theta}\sin\varphi,
\end{gather*}
we find that
\begin{gather*}
\mathscr{H}(v)= \int_{-\varphi_0(v)}^{\varphi_0(v)} \frac{\big(\sqrt{v +1}-\sqrt{v+\sin^2\theta}\sin\varphi\big)^{2n} }{\big(\sqrt{v +\sin^2\theta}\cos\varphi-{\rm i}\cos\theta\big)^{2n}}{\rm d}\varphi,
\end{gather*}
where
\begin{gather*}
\varphi_0(v)=\sin^{-1}\frac{\sqrt{v}}{\sqrt{v+\sin^2\theta}}.
\end{gather*}
By definition, $\mathscr{H}(0)=0$, and hence,
\begin{gather*}
I_{E}(\theta)= \frac{(-1)^{n}}{2m\sin\theta}\int_{0}^{\infty}Q(v)\mathscr{H}'(v)\,{\rm d}v.
\end{gather*}
Now, we calculate $\mathscr{H}'(v)$. It is straightforward to find that
\begin{gather*}
\mathscr{H}'(v)=\varphi_0'(v)\left[\frac{\big(\sqrt{v +1}-\sqrt{v+\sin^2\theta}\sin\varphi_0\big)^{2n} }{\big(\sqrt{v +\sin^2\theta}\cos\varphi_0-{\rm i}\cos\theta\big)^{2n}}
+\frac{\big(\sqrt{v +1}+\sqrt{v+\sin^2\theta}\sin\varphi_0\big)^{2n} }{\big(\sqrt{v +\sin^2\theta}\cos\varphi_0-{\rm i}\cos\theta\big)^{2n}}\right]
\\ \hphantom{\mathscr{H}'(v)=}
{}+ \int_{-\varphi_0(v)}^{\varphi_0(v)} \frac{\rm d}{{\rm d}v}\left[\frac{ \sqrt{v +1}-\sqrt{v+\sin^2\theta}\sin\varphi }{ \sqrt{v +\sin^2\theta}\cos\varphi-{\rm i}\cos\theta }\right]^{2n}{\rm d}\varphi.
\end{gather*}
If we define $\omega$ so that
\begin{gather*}
\sinh\omega=\frac{\cos\theta}{\sqrt{v+\sin^2\theta}},
\end{gather*}
we find that
\begin{gather*}
\frac{\sqrt{v+1}}{\sqrt{v+\sin^2\theta}}=\cosh\omega.
\end{gather*}
Observe that
\begin{align*}
\frac{\rm d}{{\rm d}v} \frac{ \sqrt{v +1}-\sqrt{v+\sin^2\theta}\sin\varphi }{ \sqrt{v +\sin^2\theta}\cos\varphi-{\rm i}\cos\theta } &=\frac{\rm d}{{\rm d}\omega}
\frac{\cosh\omega-\sin\varphi}{\cos\varphi-{\rm i}\sinh\omega} \frac{{\rm d}\omega}{{\rm d}v}
=-{\rm i}\frac{\rm d}{{\rm d}\varphi}
\frac{\cosh\omega-\sin\varphi}{\cos\varphi-{\rm i}\sinh\omega} \frac{{\rm d}\omega}{{\rm d}v}
\\
&=-\frac{{\rm i}\cos\theta}{2(v+\sin^2\theta)\sqrt{v+1}}\frac{\rm d}{{\rm d}\varphi}
\frac{\cosh\omega-\sin\varphi}{\cos\varphi-{\rm i}\sinh\omega} .
\end{align*}
These imply that
\begin{gather*}
\mathscr{H}'(v)=\frac{\sin\theta}{2\sqrt{v}(v+\sin^2\theta)}
\left[\frac{\big(\sqrt{v +1}-\sqrt{v}\big)^{2n} }{(\sin\theta-{\rm i}\cos\theta)^{2n}}
+\frac{\big(\sqrt{v +1}+\sqrt{v}\big)^{2n} }{(\sin\theta-{\rm i}\cos\theta)^{2n}}\right]
\\ \hphantom{\mathscr{H}'(v)=}
{}-\frac{{\rm i}\cos\theta}{2(v+\sin^2\theta)\sqrt{v+1}} \int_{-\varphi_0(v)}^{\varphi_0(v)} \frac{\rm d}{{\rm d}\varphi}\left[\frac{\sqrt{v +1}-\sqrt{v+\sin^2\theta}\sin\varphi }{ \sqrt{v +\sin^2\theta}\cos\varphi-{\rm i}\cos\theta}\right]^{2n}{\rm d}\varphi
\\ \hphantom{\mathscr{H}'(v)}
{}=\frac{(-1)^n\sin\theta\, {\rm e}^{2{\rm i}n\theta}}{2\sqrt{v}(v+\sin^2\theta)}\big[\big(\sqrt{v +1}-\sqrt{v}\big)^{2n}+ \big(\sqrt{v +1}+\sqrt{v}\big)^{2n}\big]
\\ \hphantom{\mathscr{H}'(v)=}
{}-\frac{(-1)^n{\rm i}\cos\theta\, {\rm e}^{2{\rm i}n\theta}}{2(v+\sin^2\theta)\sqrt{v+1}}
\big[\big(\sqrt{v +1}-\sqrt{v}\big)^{2n}-\big(\sqrt{v +1}+\sqrt{v}\big)^{2n}\big].
\end{gather*}

It follows that
\begin{gather*}
\mathscr{H}'\bigg(\!\sinh^2\frac{t}{2}\bigg)\frac{\rm d}{{\rm d}t}\sinh^2\frac{t}{2}
\\ \qquad
{}=\frac{(-1)^n{\rm e}^{2{\rm i}n\theta}}{2\big(\sinh^2\frac{t}{2}+\sin^2\theta\big)}\bigg\{\sin\theta \cosh\frac{t}{2}({\rm e}^{tn}+{\rm e}^{-tn})+{\rm i}\cos\theta\sinh\frac{t}{2}({\rm e}^{tn}-{\rm e}^{-tn})\bigg\}
\\ \qquad
{}=\frac{(-1)^n{\rm e}^{2{\rm i}n\theta}}{\cosh t-\cos 2\theta}\Bigg\{\sin\theta \frac{\big({\rm e}^{\frac{t}{2}}+{\rm e}^{-\frac{t}{2}}\big)({\rm e}^{tn}+{\rm e}^{-tn})}{2}+{\rm i}\cos\theta\frac{\big({\rm e}^{\frac{t}{2}}-{\rm e}^{-\frac{t}{2}}\big)({\rm e}^{tn}-{\rm e}^{-tn})}{2}\Bigg\}
\\ \qquad
{}=\frac{(-1)^n{\rm e}^{2{\rm i}n\theta}}{\cosh t-\cos 2\theta}\Bigg\{\sin\theta \frac{ {\rm e}^{\left(n+\frac{1}{2}\right)t} +{\rm e}^{-\left(n+\frac{1}{2}\right)t}+{\rm e}^{\left(n-\frac{1}{2}\right)t}+{\rm e}^{-\left(n-\frac{1}{2}\right)t} }{2}
\\ \qquad\hphantom{=\frac{(-1)^n{\rm e}^{2{\rm i}n\theta}}{\cosh t-\cos 2\theta}\bigg\{}
{} +{\rm i}\cos\theta\frac{ {\rm e}^{\left(n+\frac{1}{2}\right)t} +{\rm e}^{-\left(n+\frac{1}{2}\right)t}-{\rm e}^{\left(n-\frac{1}{2}\right)t}-{\rm e}^{-\left(n-\frac{1}{2}\right)t} }{2}\Bigg\}.
\end{gather*}
This implies that
\begin{gather*}
I_{E}(\theta)
=\frac{(-1)^n}{2m\sin\theta}\int_{0}^{\infty}Q\bigg(\sinh^2\frac{t}{2}\bigg)H'
\bigg(\sinh^2\frac{t}{2}\bigg)\frac{\rm d}{{\rm d}t}\sinh^2\frac{t}{2}{\rm d}t
\\ \hphantom{I_{E}(\theta)}
{}=\frac{{\rm e}^{2{\rm i}n\theta}}{2m\sin\theta}\int_{0}^{\infty}\frac{g(t)}{ \cosh t -\cos 2\theta}\Bigg\{\sin\theta \frac{ {\rm e}^{\left(n+\frac{1}{2}\right)t} +{\rm e}^{-\left(n+\frac{1}{2}\right)t}+{\rm e}^{\left(n-\frac{1}{2}\right)t}+{\rm e}^{-\left(n-\frac{1}{2}\right)t} }{2}
\\ \hphantom{I_{E}(\theta)=\frac{{\rm e}^{2{\rm i}n\theta}}{2m\sin\theta}\int_{0}^{\infty}}
{} +{\rm i}\cos\theta\frac{ {\rm e}^{\left(n+\frac{1}{2}\right)t} +{\rm e}^{-\left(n+\frac{1}{2}\right)t}-{\rm e}^{\left(n-\frac{1}{2}\right)t}-{\rm e}^{-\left(n-\frac{1}{2}\right)t} }{2}\Bigg\}.
\end{gather*}
Using
\begin{gather*}
g(t)=\frac{1}{2s+2n-1}{\rm e}^{-t\left(s+n-\frac{1}{2}\right)},
\end{gather*}
and
\begin{gather*}
\int_0^{\infty}\frac{{\rm e}^{-\mu t}}{\cosh t-\cos 2\theta}{\rm d}t=\frac{2}{\sin 2\theta}\sum_{k=1}^{\infty} \frac{\sin 2k\theta}{\mu+k},
\end{gather*}
we find that
\begin{gather*}
I_{E}(\theta)=\frac{1}{(2s+2n-1)}\frac{ {\rm e}^{2{\rm i}n\theta}}{2m\sin\theta}\frac{1}{\sin 2\theta}
\\ \hphantom{I_{E}(\theta)=}
{}\times\Bigg\{\sin\theta \bigg[ \sum_{k=1}^{\infty} \frac{\sin 2k\theta}{s+k-1}+\sum_{k=1}^{\infty} \frac{\sin 2k\theta}{s+k}+\sum_{k=1}^{\infty} \frac{\sin 2k\theta}{s+k+2n}+\sum_{k=1}^{\infty} \frac{\sin 2k\theta}{s+k+2n-1}\bigg]
\\ \hphantom{I_{E}(\theta)=\times\Bigg\{}
{}+{\rm i}\cos\theta \bigg[\sum_{k=1}^{\infty} \frac{\sin 2k\theta}{s+k-1}-\!\sum_{k=1}^{\infty} \frac{\sin 2k\theta}{s+k}+\!\sum_{k=1}^{\infty} \frac{\sin 2k\theta}{s+k+2n}-\!\sum_{k=1}^{\infty} \frac{\sin 2k\theta}{s+k+2n-1}\bigg]\!\Bigg\}
\\ \hphantom{I_{E}(\theta)}
{}=\frac{1}{(2s+2n-1)}\frac{ {\rm e}^{2{\rm i}n\theta}}{2m\sin\theta}\frac{1}{\sin 2\theta}
\\ \hphantom{I_{E}(\theta)=}
{}\times\Bigg\{\sin\theta \bigg[ \sum_{k=0}^{\infty} \frac{\sin (2k+2)\theta}{s+k}+\sum_{k=0}^{\infty} \frac{\sin 2k\theta}{s+k}+\sum_{k=0}^{\infty} \frac{\sin 2k\theta}{s+k+2n}+\sum_{k=0}^{\infty} \frac{\sin (2k+2)\theta}{s+k+2n}\bigg]
\\ \hphantom{I_{E}(\theta)=\times\Bigg\{}
{}+{\rm i}\cos\theta \bigg[ \sum_{k=0}^{\infty} \frac{\sin (2k\!+\!2)\theta}{s+k}-\!\sum_{k=0}^{\infty} \frac{\sin 2k\theta}{s+k}+\!\sum_{k=0}^{\infty} \frac{\sin 2k\theta}{s+k+2n}-\!\sum_{k=0}^{\infty} \frac{\sin (2k\!+\!2)\theta}{s+k+2n}\bigg]\!\Bigg\}
\\ \hphantom{I_{E}(\theta)}
{}=\frac{1}{2s+2n-1}\frac{ i{\rm e}^{2{\rm i}n\theta}}{2m\sin\theta}\Bigg(\sum_{k=0}^{\infty}\frac{{\rm e}^{-{\rm i}(2k+1)\theta}}{s+k}-\sum_{k=0}^{\infty}\frac{{\rm e}^{{\rm i}(2k+1)\theta}}{s+2n+k}\Bigg).
\end{gather*}

Now we find the sum
\begin{gather*}
\sum_{\ell=1}^{m-1}I_{E}\bigg(\frac{\pi\ell}{m}\bigg).
\end{gather*}
Notice that is $k$ is an integer,
\begin{gather*}
S_k=\sum_{\ell=1}^{m-1}\frac{\rm i} {\sin \frac{\pi \ell}{m}}\exp\bigg({-}\frac{\pi{\rm i} \ell(2k+1)}{m}\bigg)
\\ \hphantom{S_k}
{}=\frac{1}{2}\Bigg[\sum_{\ell=1}^{m-1}\frac{\rm i} {\sin \frac{\pi \ell}{m}}
\exp\bigg({-}\frac{\pi{\rm i} \ell(2k+1)}{m}\bigg)
+\sum_{\ell=1}^{m-1}\frac{\rm i}{\sin \frac{\pi (m-\ell)}{m}}
\exp\bigg({-}\frac{\pi{\rm i} (m-\ell)(2k+1)}{m}\bigg)\Bigg]
\\ \hphantom{S_k}
{}=\sum_{\ell=1}^{m-1}\frac{\exp\left(\frac{\pi{\rm i} \ell(2k+1)}{m}\right)-\exp\left(-\frac{\pi{\rm i} \ell(2k+1)}{m}\right)}{\exp \left(\frac{\pi{\rm i} \ell}{m}\right)-\exp\left(- \frac{\pi{\rm i} \ell}{m}\right)}
=\sum_{\ell=1}^{m-1}\frac{x_{\ell}^{2k+1}-x_{\ell}^{-2k-1}}{x_{\ell}-x_{\ell}^{-1}},
\end{gather*}
where
\begin{gather*}
x_{\ell}=\exp\bigg(\frac{\pi{\rm i} \ell}{m}\bigg).
\end{gather*}
Using the fact that
\begin{gather*}
\sum_{\ell=1}^{m-1}x_{\ell}^{2h}
=\begin{cases}
m-1 &\text{if} \ m\text{ divides }h,
\\
-1 & \text{if} \ m\text{ does not divide }h,
\end{cases}
\end{gather*}
we find that if $k$ is a nonnegative integer
\begin{align*}
S_k&=\sum_{\ell=1}^{m-1}\big(x_{\ell}^{2k}+x_{\ell}^{2k-2}+\dots+x_{\ell}^{-2k+2}+x_{\ell}^{-2k}\big)
=-2k-1+m\bigg(2\bigg\lfloor\frac{k}{m}\bigg\rfloor+1\bigg)
\\
&=-2m\bigg\{\frac{k}{m}\bigg\}+m-1.
\end{align*}
Here $\{x\}=x-\lfloor x\rfloor$.
The case when $k$ is a negative integer is more complicated. First we notice that for any integer $x$,
if $q=\left\lfloor \frac{x}{m}\right\rfloor$, then $x=qm+r$, where $0\leq r\leq m-1$. It follows that
\begin{gather*}
-x-1=-qm-r-1=(-q-1)m+m-r-1,
\end{gather*}
where
\begin{gather*}
0\leq m-r-1\leq m-1.
\end{gather*}
This shows that
\begin{gather*}
\bigg\lfloor\frac{-x-1}{m}\bigg\rfloor=-q-1=-\bigg\lfloor\frac{x}{m}\bigg\rfloor-1.
\end{gather*}

If $k$ is a negative integer, let $k'=-k$. Then $k'$ is a positive integer and
\begin{align*}
S_k&=-\sum_{\ell=1}^{m-1}\frac{x_{\ell}^{2k'-1}-x_{\ell}^{-2k'+1}}{x_{\ell}-x_{\ell}^{-1}}
\\
&=-\sum_{\ell=1}^{m-1}\big(x_{\ell}^{2k'-2}+x_{\ell}^{2k'-4}+\dots+x_{\ell}^{-2k'+4}+x_{\ell}^{-2k'+2}\big)
\\
&= -\bigg\{{-}2k'+1+m\bigg(2\bigg\lfloor\frac{k'-1}{m}\bigg\rfloor+1\bigg)\bigg\}
=-2k-1-m\bigg({-}2\bigg\lfloor\frac{k}{m}\bigg\rfloor-1\bigg)
\\
&=-2m\bigg\{\frac{k}{m}\bigg\}+m-1.
\end{align*}
This shows that for any integer $k$,
\begin{gather*}
S_k=-2\alpha_m(k)+m-1,
\end{gather*}
where
\begin{gather*}
\alpha_m(k)=m\bigg\{\frac{k}{m}\bigg\}
\end{gather*}
is the least positive residue of $k$ modulo $m$. We have established that
\begin{gather*}
\alpha_m(-k)=m-1-\alpha_m(k).
\end{gather*}
Hence,
\begin{gather*}S_{-k}=-S_k.
\end{gather*}
 It follows that
\begin{align*}
\sum_{\ell=1}^{m-1}I_{E}\bigg(\frac{\pi\ell}{m}\bigg)
={}&\frac{1}{2m}\frac{1}{2s+2n-1}\Bigg\{\sum_{k=0}^{\infty}\frac{1}{s+k}(-2\alpha_m(k-n)+m-1)
\\
&+\sum_{k=0}^{\infty}\frac{1}{s+2n+k}(-2\alpha_m(k+n)+m-1)\Bigg\}.
\end{align*}

Writing $k=mq+r$, where $0\leq r\leq m-1$, we have
\begin{align*}
\sum_{\ell=1}^{m-1}I_{E}\bigg(\frac{\pi\ell}{m}\bigg)
={}&\frac{1}{2m}\frac{1}{2s+2n-1}\Bigg\{\sum_{q=0}^{\infty}\sum_{r=0}^{m-1}\frac{1}{s +mq+r}(-2\alpha_m(r-n)+m-1)
\\
& +\sum_{q=0}^{\infty}\sum_{r=0}^{m-1}\frac{1}{s+2n+mq+r}(-2\alpha_m(r+n)+m-1)\Bigg\}.
\end{align*}

Notice that when $r$ runs from $0$ to $m-1$, $r-n$ and $r+n$ respectively runs through a complete residue system modulo $m$. Hence,
\begin{gather*}\sum_{r=0}^{m-1} \alpha_{m}(r-n)
=\sum_{r=0}^{m-1}[m-1-\alpha_{m}(r-n)]
\end{gather*}
and
\begin{gather*}
\sum_{r=0}^{m-1} \alpha_{m}(r+n)
=\sum_{r=0}^{m-1}[m-1-\alpha_{m}(r+n)].
\end{gather*}
Therefore,
\begin{align}\label{eq314_1}
\sum_{r=0}^{m-1}(-2\alpha_m(r-n)+m-1)=\sum_{r=0}^{m-1}(-2\alpha_m(r+n)+m-1)=0.
\end{align}
Thus
\begin{gather*}
\sum_{\ell=1}^{m-1}\!I_{E}\bigg(\frac{\pi\ell}{m}\bigg) =\frac{1}{2m}\frac{1}{2s\!+2n\!-1}\Bigg\{\sum_{q=0}^{\infty}\sum_{r=0}^{m-1}
\bigg[\frac{1}{s\!+ mq\!+r}-\frac{1}{m(q+1)}\bigg](-2\alpha_m(r-n)+m-1)
\\ \hphantom{\sum_{\ell=1}^{m-1}I_{E}\bigg(\frac{\pi\ell}{m}\bigg) =}
{} +\sum_{q=0}^{\infty}\sum_{r=0}^{m-1}\bigg[\frac{1}{s+2n+mq+r}-\frac{1}{m(q+1)}\bigg] (-2\alpha_m(r+n)+m-1)\Bigg\}
\\ \hphantom{\sum_{\ell=1}^{m-1}I_{E}\bigg(\frac{\pi\ell}{m}\bigg)}
{}=\frac{1}{2s+2n-1}\sum_{r=0}^{m-1}\bigg[\frac{2\alpha_m(r-n)+1-m}{2m^2}\psi\bigg(\frac{s+r}{m}\bigg)
\\ \hphantom{\sum_{\ell=1}^{m-1}I_{E}\bigg(\frac{\pi\ell}{m}\bigg) =}
{}+\frac{2\alpha_m(r+n)+1-m}{2m^2}\psi\bigg(\frac{s+2n+r}{m}\bigg)\bigg].
\end{gather*}
Finally, we find that the elliptic contribution is given by $\Xi_E=\mathscr{E}_E(s)-\mathscr{E}_E(a)$, where
\begin{align*}
\mathscr{E}_E(s)=
{}&\frac{1}{2s+2n-1}\sum_{j=1}^v\sum_{r=0}^{m_j-1}\bigg[\frac{2\alpha_{m_j}(r-n)+1-m_j}{2m_j^2}\psi \bigg(\frac{s+r}{m_j}\bigg)
\\
& +\frac{2\alpha_{m_j}(r+n)+1-m_j}{2m_j^2}\psi\bigg(\frac{s+2n+r}{m_j}\bigg)\bigg].
\end{align*}

\section[Contribution from parabolic elements and absolutely continuous spectrum]{Contribution from parabolic elements\\ and absolutely continuous spectrum}\label{a3}

 In this appendix, we compute the parabolic contribution $\Xi_P$ to the trace equation~\eqref{eq0903_6}.

\subsection{Contribution from parabolic elements}\label{par1}
In this section, we compute the term\vspace{-1ex}
\begin{gather*}
\Xi_{P, 1}= \iint_{F^Y} \sum_{\substack{\gamma\in\Gamma\\\gamma \;\text{is parabolic}}}k(\gamma z, z)\gamma'(z)^ny^{2n}{\rm d}\mu(z).
\end{gather*}
Again we first consider the case where $\Psi(u)=\Psi_{n,s}(u)$.
Recall that the Riemann surface $X$ has~$q$ cusps corresponding to the $q$ parabolic elements $\kappa_1, \dots, \kappa_q$. For each $1\leq i\leq q$, there is a~$\sigma_i\in{\rm PSL}(2,\mathbb{R})$ such that
\begin{gather*}
\sigma_i^{-1}\kappa_i\sigma_i=\begin{pmatrix} 1 & \pm1\\ 0 & 1\end{pmatrix}\!.
\end{gather*}
If $\gamma$ is a parabolic element of $\Gamma$, then there exists $\alpha\in \Gamma$, $1\leq j\leq q$ and a nonzero integer $\ell$ so that\vspace{-1ex}
\begin{gather*}
\alpha^{-1}\gamma\alpha=\kappa_j^{\ell}.
\end{gather*}
For an integer $\ell$, define\vspace{-1ex}
\begin{gather*}
T_{\ell}=\begin{pmatrix} 1 & \ell\\0 &1\end{pmatrix}\!.
\end{gather*}
Then\vspace{-1ex}
\begin{align*}
\Xi_{P, 1}&= \iint_{F^Y} \sum_{j=1}^q\sum_{\ell\neq 0}\sum_{\alpha\in \Gamma_{\kappa_j}\backslash\Gamma} k\big(\alpha\kappa_j^{\ell}\alpha^{-1} z,z\big)\big[\big(\alpha\kappa_j^{\ell}\alpha^{-1}\big)'(z)\big]^ny^{2n}{\rm d}\mu(z)
\\
&= \sum_{j=1}^q\sum_{\ell\neq 0} \sum_{\alpha\in \sigma_j^{-1}\Gamma_{\kappa_i}\sigma_j\backslash \sigma_j^{-1}\Gamma\sigma_j}\iint_{\sigma_j^{-1}(F^Y)} k\big(\alpha T_{\ell}\alpha^{-1} z,z\big)
\big[\big(\alpha T_{\ell}\alpha^{-1}\big)'(z)\big]^ny^{2n}{\rm d}\mu(z)
\\
&= \sum_{j=1}^q\sum_{\ell\neq 0} \iint_{\mathbb{H}_j^Y} k( T_{\ell} z,z)T_{\ell}'(z)^ny^{2n}{\rm d}\mu(z).
\end{align*}
 Here\vspace{-1ex}
 \begin{gather*}
 \mathbb{H}_j^Y=\mathbb{H}^Y\setminus \bigcup_{k\neq j} \bigcup_{\alpha \in \Gamma_{\kappa_j} \backslash \Gamma }\sigma_j^{-1}\big(\alpha\big(F_k^Y\big)\big),
 \end{gather*}
 where
 \begin{gather*}
 \mathbb{H}^Y=\big\{x+{\rm i}y\mid 0\leq x\leq 1, 0<y\leq Y\big\}.
 \end{gather*}
 Since the hyperbolic area of $\mathbb{H}\setminus \mathbb{H}^Y$ is $O\big(Y^{-1}\big)$, we find that
 \begin{gather*}
 \Xi_{P,1}=qI_P+O\big(Y^{-1}\big),
 \end{gather*}
 where
\begin{gather*}
I_P=\sum_{\ell\neq 0} \iint_{ \mathbb{H}^Y}k(T_{\ell} z,z)T_{\ell}'(z)^ny^{2n}{\rm d}\mu(z).
\end{gather*}
Now
\begin{gather*}
u(T_{\ell}z,z) = \frac{\ell^2}{4y^2},
\\
(-4)^n\frac{T_{\ell}'(z)^n}{(T_{\ell}z-\bar{z})^{2n}}= \frac{(-4)^n}{(\ell+2{\rm i} y)^{2n}}.
\end{gather*}
Hence, we have
\begin{align*}
I_P&=(-1)^n\sum_{\ell=1}^{\infty} \int_0^{Y} \Psi \bigg( \frac{\ell^2}{4y^2}\bigg)\bigg[\frac{(2y)^{2n}}{(l+2{\rm i} y)^{2n}}+\frac{(2y)^{2n}}{(l-2{\rm i}y)^{2n}}\bigg] \frac{{\rm d}y}{y^2}
\\
&=(-1)^n \sum_{\ell=1}^{\infty}\frac{2}{\ell}\int_0^{2Y/\ell}\Psi \bigg(\frac{1}{y^2}\bigg)\bigg[\frac{y^{2n}}{(1+{\rm i}y)^{2n}}
+\frac{y^{2n}}{(1-{\rm i}y)^{2n}}\bigg]\frac{{\rm d}y}{y^2}.
\end{align*}
As in \cite{Iwaniec2002}, if $F(y)$ is a well-behaved function, standard techniques in analytic number theory give
\begin{align*}
\sum_{\ell=1}^{\infty}\frac{2}{\ell} \int_0^{\frac{2Y}{\ell}}F(y)\,{\rm d}y
&=\int_{1^-}^{\infty} \int_0^{\frac{2Y}{v}}F(y)\,{\rm d}y \,{\rm d}
\bigg(\sum_{\ell\leq v}\frac{2}{\ell}\bigg)
=\int_0^{2Y}F(y)\Bigg[\int_{1^-}^{2Y/y}d\bigg(\sum_{\ell\leq v}\frac{2}{\ell}\bigg)\Bigg]{\rm d}y
\\
&=\int_0^{2Y}F(y)\bigg[2\log\frac{2Y}{y}+2\gamma+O\big(yY^{-1}\big)\bigg]{\rm d}y.
\end{align*}
Hence,
\begin{align*}
I_P&=(-1)^n \int_0^{2Y}\Psi \bigg(\frac{1}{y^2}\bigg)
\bigg[\frac{y^{2n}}{(1+{\rm i}y)^{2n}}+\frac{y^{2n}}{(1-{\rm i}y)^{2n}}\bigg] \bigg[2\log\frac{2Y}{y}+2\gamma+O\big(yY^{-1}\big)\bigg]\frac{{\rm d}y}{y^2}
\\
&= (\log(2Y)+\gamma)I_{P,0}+I_{P,1}+O\big(Y^{-1}\log Y\big),
\end{align*}
where
\begin{gather*}
I_{P,0}=2(-1)^n \int_0^{\infty}\Psi \bigg(\frac{1}{y^2}\bigg) \bigg[\frac{y^{2n}}{(1+{\rm i}y)^{2n}}
+\frac{y^{2n}}{(1-{\rm i}y)^{2n}}\bigg]\frac{{\rm d}y}{y^2},
\\
I_{P,1}=2(-1)^n \int_0^{\infty}\Psi \bigg(\frac{1}{y^2}\bigg) \bigg[\frac{y^{2n}}{(1+{\rm i}y)^{2n}}
+\frac{y^{2n}}{(1-{\rm i}y)^{2n}}\bigg] \log\frac{1}{y }\frac{{\rm d}y}{y^2}.
\end{gather*}
A straightforward computation gives
\begin{gather*}
I_{P,0}
=2(-1)^n\int_0^{\infty}\Psi (x^2)\bigg[\frac{1}{(x+{\rm i})^{2n}}+\frac{1}{(x-{\rm i})^{2n}}\bigg]{\rm d}x
=Q(0)
=\frac{1}{2s+2n-1}.
\end{gather*}
The computation of $I_{P,1}$ is more complicated:
 \begin{gather*}
I_{P,1}
=2(-1)^n\int_0^{\infty}\Psi (u^2)\bigg[\frac{1}{(u+{\rm i})^{2n}}
+\frac{1}{(u-{\rm i})^{2n}}\bigg]\log u\,{\rm d}u
\\ \hphantom{I_{P,1}}
{}=\frac{(-1)^n}{\pi}\int_0^{\infty} \sum_{k=0}^{\infty}\frac{\Gamma(s+k)\Gamma(s+2n+k)}{\Gamma(2s+2n+k)}\frac{1}{k!} \frac{1}{(u^2+1)^{k+s+2n}}
\\ \hphantom{I_{P,1}=\frac{(-1)^n}{\pi}\ \,}
{}\times\sum_{m=0}^n
\begin{pmatrix} 2n\\2m\end{pmatrix} u^{2m}(-1)^{n-m}\log u\,{\rm d}u
\\ \hphantom{I_{P,1}}
{}=\frac{1}{4\pi}\sum_{m=0}^n \begin{pmatrix} 2n\\2m\end{pmatrix} (-1)^m\sum_{k=0}^{\infty}\frac{\Gamma(s+k)\Gamma(s+2n+k)}{\Gamma(2s+2n+k)}\frac{1}{k!}
 \int_0^{\infty}\!\!\! \frac{u^m}{(u+1)^{k+s+2n}}\log u \frac{1}{\sqrt{u}}{\rm d}u.
\end{gather*}
This implies that
\begin{gather*}
I_{P,1}=G'(0),
\end{gather*}
where
\begin{gather*}
G(\alpha)=\frac{1}{4\pi}\sum_{m=0}^n \begin{pmatrix} 2n\\2m\end{pmatrix} (-1)^m\sum_{k=0}^{\infty}\frac{\Gamma(s+k)\Gamma(s+2n+k)}{\Gamma(2s+2n+k)}\frac{1}{k!}
\int_0^{\infty} \frac{u^{m+\alpha-\frac{1}{2}}}{(u+1)^{k+s+2n}}\,{\rm d}u.
\end{gather*}
From \cite{GradshteynRyzhik}, we find that
\begin{gather*}
 \int_0^{\infty} \frac{u^{m+\alpha-\frac{1}{2}}}{(u+1)^{k+s+2n}} {\rm d}u=\frac{\Gamma\left(m+\alpha+\frac{1}{2}\right) \Gamma\left(s+2n+k-m-\alpha-\frac{1}{2}\right)}{\Gamma(s+2n+k)}.
\end{gather*}
Hence,
\begin{gather*}
G(\alpha)=\frac{1}{4\pi}\sum_{m=0}^n \begin{pmatrix} 2n\\2m\end{pmatrix} (-1)^m\Gamma\left(m+\alpha+\frac{1}{2}\right)
\sum_{k=0}^{\infty}\frac{\Gamma(s+k)\Gamma\left(s+2n+k-m-\alpha-\frac{1}{2}\right)}{\Gamma(2s+2n+k)}\frac{1}{k!}.
\end{gather*}
Using the definition and identity of hypergeometric functions (see~\cite{Andrews}), we have
\begin{gather*}
\sum_{k=0}^{\infty}\frac{\Gamma(s+k)\Gamma\left(s+2n+k-m-\alpha-\frac{1}{2}\right)}{\Gamma(2s+2n+k )}\frac{1}{k!}
\\ \qquad
{}=\frac{\Gamma(s)\Gamma\left(s+2n-m-\alpha-\frac{1}{2}\right)}{\Gamma\left(2s+2n \right)}\,_2F_1\left(\begin{matrix} s, \;s+2n-m-\alpha-\frac{1}{2}\\ 2s+2n \end{matrix}\,; \,1\right)
\\ \qquad
{}=\frac{\Gamma(s)\Gamma\left(s+2n-m-\alpha-\frac{1}{2}\right)\Gamma\left(m+\alpha+\frac{1}{2}\right)}{\Gamma\left(s+2n \right)\Gamma\left(s+m+\alpha+\frac{1}{2}\right)}.
\end{gather*}
Now, we notice that
\begin{gather*}
\begin{pmatrix}
2n\\2m\end{pmatrix}
=\frac{(2n)(2n-1)\cdots (2n-2m+1)}{1\cdot 2 \cdots (2m-1)(2m)}
=\frac{(-n)_m\left(-n+\frac{1}{2}\right)_m}{\left(\frac{1}{2}\right)_m m!},
\\
(-1)^m \Gamma\bigg(s+2n-m-\alpha-\frac{1}{2}\bigg)
=(-1)^m\frac{\Gamma\left(s+2n-\alpha-\frac{1}{2}\right)}{\left(s+2n-\alpha-\frac{1}{2}-1\right)\cdots \left(s+2n-\alpha-\frac{1}{2}-m\right)}
\\ \hphantom{(-1)^m \Gamma\bigg(s+2n-m-\alpha-\frac{1}{2}\bigg)}
{}=\frac{\Gamma\left(s+2n-\alpha-\frac{1}{2}\right)}{\left(-s-2n+\alpha+\frac{1}{2}+1\right)\cdots
\left(-s-2n+\alpha+\frac{1}{2}+m\right)}
\\ \hphantom{(-1)^m \Gamma\bigg(s+2n-m-\alpha-\frac{1}{2}\bigg)}
{}=\frac{\Gamma\left(s+2n-\alpha-\frac{1}{2}\right)}{\left(-s-2n+ \alpha+\frac{3}{2}\right)_m}.
\end{gather*}
This gives
\begin{gather*}
G(\alpha)=\frac{1}{4\pi}\frac{\Gamma\left(\alpha+\frac{1}{2}\right)^2\Gamma(s) \Gamma\left(s+2n-\alpha-\frac{1}{2}\right)}{\Gamma(s+2n ) \Gamma\left(s+\alpha+\frac{1}{2}\right)}H(\alpha),
\end{gather*}
where
\begin{align*}
H(\alpha)&=\sum_{m=0}^n \frac{(-n)_m\left(-n+\frac{1}{2}\right)_m}{\left(\frac{1}{2}\right)_m m!}\frac{\left(\alpha+\frac{1}{2}\right)_m \left(\alpha+\frac{1}{2}\right)_m }{\left(-s-2n+ \alpha+\frac{3}{2}\right)_m\left(s+\alpha+\frac{1}{2}\right)_m}
\\
&=\,_4F_3\left(\begin{matrix} -n, \,-n+\frac{1}{2},\, \alpha+\frac{1}{2}, \,\alpha+\frac{1}{2}\\ \frac{1}{2}, \,-s-2n+\alpha+ \frac{3}{2},\,s+\alpha+\frac{1}{2}\end{matrix};\;1\right)\!.
\end{align*}
This is a balanced hypergeometric series. According to \cite[Theorem~3.3.3]{Andrews}, we find that
\begin{align*}
H(\alpha)={}&\frac{\Gamma(\alpha+1-s)\Gamma(\alpha+s+2n)
\Gamma\left(\alpha-s-2n+\frac{3}{2}\right)\Gamma\left(\alpha+s+\frac{1}{2}\right)} {\Gamma(\alpha+1-s-n)\Gamma(\alpha+s+n)\Gamma\left(\alpha-s-n + \frac{3}{2}\right)\Gamma\left(\alpha+s+n+\frac{1}{2}\right)}
\\
&\times \,_4F_3\left(\begin{matrix} -n, \,-n+\frac{1}{2},\, -\alpha, \,-\alpha\\ \frac{1}{2}, \,s-\alpha,\,1-s-2n-\alpha \end{matrix}\,;\;1\right)\!.
\end{align*}
As a function of $\alpha$,
\begin{gather*}
{}_4F_3\left(\begin{matrix} -n, \,-n+\frac{1}{2},\, -\alpha, \,-\alpha\\ \frac{1}{2}, \,s-\alpha,\,1-s-2n-\alpha \end{matrix}\,;\;1\right)=1+O\big(\alpha^2\big) \qquad
\text{as}\quad \alpha\rightarrow 0.
\end{gather*}
Hence, $I_{P,1}=G'(0)=L'(0)$, where $L(\alpha)$ is given by
\begin{align*}
 L'(\alpha)=\frac{1}{4\pi}\frac{\Gamma\big(\alpha\!+\frac{1}{2}\big)^2\Gamma(s) \Gamma\big(s\!+2n\!-\alpha-\frac{1}{2}\big)\Gamma(\alpha+1-s)\Gamma(\alpha+s+2n) \Gamma\big(\alpha-s-2n+ \frac{3}{2}\big)}{\Gamma(s+2n ) \Gamma(\alpha+1-s-n)\Gamma(\alpha+s+n)\Gamma\big(\alpha-s-n + \frac{3}{2}\big)\Gamma\big(\alpha+s+n+\frac{1}{2}\big)}.
\end{align*}
It is then straightforward to obtain
\begin{align*}
 I_{P,1}={}&\frac{1}{4\pi}\frac{\Gamma\left(\frac{1}{2}\right)^2\Gamma(s) \Gamma\left(s+2n -\frac{1}{2}\right)\Gamma(1-s)\Gamma(s+2n)\Gamma\left(-s-2n+ \frac{3}{2}\right)}{\Gamma\left(s+2n \right) \Gamma(1-s-n)\Gamma(s+n)\Gamma\left(-s-n + \frac{3}{2}\right)\Gamma\left( s+n+\frac{1}{2}\right)}
 \\
 &\times \bigg\{2\psi\bigg(\frac{1}{2}\bigg) -\psi\bigg(s+2n -\frac{1}{2}\bigg)+\psi(1-s) +\psi(s+2n)+\psi\bigg({-}s-2n+ \frac{3}{2}\bigg)
 \\
 &-\psi(1-s-n)-\psi(s+n)-\psi\bigg({-}s-n + \frac{3}{2}\bigg)-\psi\bigg(s+n+\frac{1}{2}\bigg)\bigg\}.
\end{align*}
Using the fact that $\Gamma(z+1)=z\Gamma(z)$, we find that
\begin{gather*}
\Gamma\bigg(s+n+\frac{1}{2}\bigg)= \bigg(s+n-\frac{1}{2}\bigg)\Gamma\bigg(s+n-\frac{1}{2}\bigg),
\end{gather*}
and
\begin{gather*}
\psi\bigg(s+n+\frac{1}{2}\bigg)= \frac{1}{s+n-\frac{1}{2}}+\psi\bigg(s+n-\frac{1}{2}\bigg).
\end{gather*}
On the other hand, using
\begin{gather*}
\Gamma(z)\Gamma(1-z)=\frac{\pi}{\sin \pi z},
\end{gather*}
we find that
\begin{gather*}
\frac{\Gamma(s)\Gamma(1-s)}{\Gamma(1-s-n)\Gamma(s+n)}=\frac{\sin\pi(s+n)}{\sin \pi s}=(-1)^n,
\\[.5ex]
\frac{\Gamma(s+2n -\frac{1}{2}) \Gamma\left(-s-2n+ \frac{3}{2}\right)}{ \Gamma\left( -s-n + \frac{3}{2}\right)\Gamma\left( s+n-\frac{1}{2}\right)}
=\frac{\sin\pi\left(s+n-\frac{1}{2}\right)}{\sin \pi(s+2n -\frac{1}{2})}=(-1)^n,
\\[.5ex]
\psi(s)-\psi(1-s)+\psi(1-s-n)-\psi(s+n)=0,
\\
\psi\bigg(s+n-\frac{1}{2}\bigg)-\psi\bigg({-}s-2n+\frac{3}{2}\bigg)+\psi\bigg({-}s-n+\frac{3}{2}\bigg)
-\psi\bigg(s+n-\frac{1}{2}\bigg)=0.
\end{gather*}
Together with
\begin{gather*}
\Gamma\bigg(\frac{1}{2}\bigg)=\sqrt{\pi},\qquad \psi\le\bigg(\frac{1}{2}\bigg)=-\gamma-2\log 2,
\end{gather*}
we have
\begin{gather*}
\frac{1}{4\pi}\frac{\Gamma\left(\frac{1}{2}\right)^2\Gamma(s) \Gamma\left(s+2n -\frac{1}{2}\right) \Gamma( 1-s) \Gamma\left(-s-2n+ \frac{3}{2}\right)}{\Gamma(1-s-n)\Gamma(s+n) \Gamma\left(-s-n+\frac{3}{2}\right)\Gamma\left(s+n+\frac{1}{2}\right)}=\frac{1}{2(2s+2n-1)},
\end{gather*}
and
\begin{align*}
I_{P,1}={}&\frac{1}{2(2s+2n-1)}\bigg\{\psi(s)+\psi(s+2n)-4\log 2-2\psi\bigg(s+n+\frac{1}{2}\bigg)-2\psi(s+n)\bigg\}
\\
&-\frac{\gamma}{2s+2n-1}+\frac{1}{(2s+2n-1)^2}.
\end{align*}
Collecting the various contributions, we find that
\begin{align*}
I_P=&\frac{1}{2s+2n-1}\log Y+\frac{1}{(2s+2n-1)^2}
\\
&+\frac{1}{2(2s+2n-1)}\left\{\psi(s)+\psi(s+2n)-2\log 2-2\psi\left(s+n+\frac{1}{2}\right)-2\psi(s+n)\right\}.
\end{align*}
Therefore, $\Xi_{P,1}=\mathscr{E}_{P,1}(s)-\mathscr{E}_{P,1}(a)$, where
\begin{align*}
\mathscr{E}_{P,1}(s)={}&\frac{q}{2s+2n-1}\log Y+\frac{q}{(2s+2n-1)^2}
+\frac{q}{2(2s+2n-1)}
\\
&\times \bigg\{\psi(s)+\psi(s+2n)-2\log 2-2\psi\bigg(s+n+\frac{1}{2}\bigg)-2\psi(s+n)\bigg\}+O\big(Y^{-1}\big).
\end{align*}

\subsection{Contribution of the absolutely continuous spectrum}\label{par2}

In this section, we want to compute
\begin{gather*}
J=\frac{1}{4\pi}\sum_{j=1}^q\int_{-\infty}^{\infty}h(r)
\iint_{F^Y}\bigg|E_j\bigg(z,\frac{1}{2}+{\rm i}r;n\bigg)\bigg|^2y^{2n}{\rm d}\mu(z)\,{\rm d}r,
\end{gather*}
where
\begin{gather*}
h(r)=\widetilde{\Lambda}\left(\frac{1}{4}+r^2\right) =\frac{1}{ r^2+\left(s+n-\frac{1}{2}\right)^2}-\frac{1}{ r^2+\left(a+n-\frac{1}{2}\right)^2}.
\end{gather*}
From Appendix \ref{maass}, we find that
\begin{gather*}
\sum_{j=1}^q
\iint_{F^Y}\bigg|E_j\bigg(z,\frac{1}{2}+{\rm i}r;n\bigg)\bigg|^2y^{2n}{\rm d}\mu(z)
\\ \qquad
{}=\frac{1}{2{\rm i}r}\operatorname{Tr}\bigg[\Phi\bigg(\frac{1}{2}-{\rm i}r\bigg)Y^{2{\rm i}r}-\Phi\bigg(\frac{1}{2}+{\rm i}r\bigg)Y^{-2{\rm i}r}\bigg]+2q\log Y-\frac{\varphi'}{\varphi}\bigg(\frac{1}{2}+{\rm i}r\bigg).
\end{gather*}
Therefore,
\begin{gather*}
J=J_1+J_2+J_3,
\end{gather*}
where
\begin{gather*}
J_1= \frac{q\log Y}{2\pi} \int_{-\infty}^{\infty} h(r)\,{\rm d}r,
\\
J_2= \frac{1}{8\pi{\rm i} }\int_{-\infty}^{\infty} \frac{h(r)}{r}\operatorname{Tr}\bigg[\Phi\bigg(\frac{1}{2}-{\rm i}r\bigg)Y^{2{\rm i}r}-\Phi\bigg(\frac{1}{2}+{\rm i}r\bigg)Y^{-2{\rm i}r}\bigg]{\rm d}r,
\\
J_3= -\frac{1}{4\pi }\int_{-\infty}^{\infty} h(r) \frac{\varphi'}{\varphi}\bigg(\frac{1}{2}+{\rm i}r\bigg){\rm d}r.
\end{gather*}
Since
\begin{gather*}
\frac{1}{2\pi}\int_{-\infty}^{\infty} h(r)\,{\rm d}r=g(0),
\end{gather*}
we find that
\begin{gather*}
J_1=q\log Y\bigg(\frac{1}{2s+2n-1}-\frac{1}{2a+2n-1}\bigg).
\end{gather*}
As in \cite{Iwaniec2002},
\begin{align*}
J_2&=\frac{1}{8\pi{\rm i}} \int_{-\infty}^{\infty}\frac{h(r)}{r}\operatorname{Tr}\bigg[\Phi\bigg(\frac{1}{2}-{\rm i}r\bigg)Y^{2{\rm i}r}-\Phi\bigg(\frac{1}{2}+{\rm i}r\bigg)Y^{-2{\rm i}r}\bigg]{\rm d}r
\\
&=\frac{1}{4}\operatorname{Tr}\Phi\bigg(\frac{1}{2}\bigg)h(0)+O\big(Y^{-2\varepsilon}\big)
\\
&=\bigg[\frac{1}{(2s+2n-1)^2}-\frac{1}{(2a+2n-1)^2}\bigg]
\operatorname{Tr}\Phi\bigg(\frac{1}{2}\bigg)+O\big(Y^{-2\varepsilon}\big),
\end{align*}
where $\varepsilon$ is an arbitrary positive number.
Collecting everything together, we find that
\begin{align*}
J={}&q\log Y\bigg(\frac{1}{2s+2n-1}-\frac{1}{2a+2n-1}\bigg)
+\bigg[\frac{1}{(2s+2n-1)^2}-\frac{1}{(2a+2n-1)^2}\bigg]
\operatorname{Tr}\Phi\bigg(\frac{1}{2}\bigg)
\\
&-\frac{1}{4\pi }\int_{-\infty}^{\infty} h(r) \frac{\varphi'}{\varphi}
\bigg(\frac{1}{2}+{\rm i}r\bigg){\rm d}r+O\big(Y^{-2\varepsilon}\big).
\end{align*}
When $h(r)=\frac{1}{r^2+\alpha^2}$ for some $\alpha$ with $\operatorname{Re}\alpha>0$, we can follow the method of \cite{Venkov1982} to compute the integral
\begin{gather*}
J_3= -\frac{1}{4\pi }\int_{-\infty}^{\infty} h(r) \frac{\varphi'}{\varphi}\left(\frac{1}{2}+{\rm i}r\right){\rm d}r.
\end{gather*}
From \eqref{eqphi} and the results for the case $n=0$, we can deduce that $\varphi(s)$ is holomorphic on the half plane $\operatorname{Re}s\geq\frac{1}{2}$ except for
for a finite number of poles on $\left(\frac{1}{2}, 1\right]$, with order not larger than~$q$. From the relation $\varphi(s)\varphi(1-s)=1$, we then deduce that on the half plane $\operatorname{Re}s<\frac{1}{2}$, $\varphi(s)$ can only have zeros on $\left[0, \frac{1}{2}\right)$. Using these and the residue theorem, we find that
\begin{gather}
\frac{1}{2\pi}\int_{-\infty}^{\infty}\frac{1}{r^2+\alpha^2} \frac{\varphi'}{\varphi}\bigg(\frac{1}{2}+{\rm i}r\bigg){\rm d}r
=\frac{1}{2s+2n-1}\frac{\varphi'(s+n)}{\varphi(s+n)}
-\sum_{\substack{\text{$\rho$ is a pole of $\varphi(s)$}\\ \operatorname{Re}\rho<\frac{1}{2}}}\frac{\text{order}\,(\rho)}{\alpha^2-\left(\rho-\frac{1}{2}\right)^2}
\nonumber
\\ \hphantom{\frac{1}{2\pi}\int_{-\infty}^{\infty}\frac{1}{r^2+\alpha^2} \frac{\varphi'}{\varphi}\bigg(\frac{1}{2}+{\rm i}r\bigg){\rm d}r=}
{}+ \sum_{\substack{\text{$\sigma$ is a pole of $\varphi(s)$}\\\sigma \in\left(\frac{1}{2},1\right]}}\frac{\text{order}\,(\sigma)}{\alpha^2-\left(\sigma-\frac{1}{2}\right)^2}+c.
\label{scattering}
\end{gather}
Here $\text{order}(z)$ is the order of pole of $\varphi(s)$ at $s=z$, $c$ is a constant independent of $\alpha$.

We are interested in the particular case where $\alpha=s+n-\frac{1}{2}$. The following result is needed when we want to discuss the dimension of the space of holomorphic $n$-differentials.
\begin{Proposition}\label{polescatter}
Given a positive integer $n$, let $\Sigma(s)$ be the function
\begin{gather*}
\Sigma(s)=\frac{1}{2\pi }\int_{-\infty}^{\infty}\frac{1}{r^2+\left(s+n-\frac{1}{2}\right)^2} \frac{\varphi'}{\varphi}\bigg(\frac{1}{2}+{\rm i}r\bigg){\rm d}r.
\end{gather*}
Then the residue of $\Sigma(s)$ at $s=0$ is 0.
\end{Proposition}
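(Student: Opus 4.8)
The plan is to start from the explicit evaluation \eqref{scattering} of the scattering integral, specialized to $\alpha=s+n-\tfrac12$. Writing $\alpha^2-\bigl(\rho-\tfrac12\bigr)^2=(s+n-\rho)(s+n+\rho-1)$, this presents $\Sigma(s)$ in the form
\begin{gather*}
\Sigma(s)=\frac{1}{2s+2n-1}\,\frac{\varphi'(s+n)}{\varphi(s+n)}
+\sum_{\rho}\frac{\pm\,\operatorname{order}(\rho)}{(s+n-\rho)(s+n+\rho-1)}+c ,
\end{gather*}
where the sum runs over the finitely many zeros and poles $\rho$ of $\varphi$ in $\bigl[0,\tfrac12\bigr)\cup\bigl(\tfrac12,1\bigr]$ and $c$ does not depend on $s$. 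The whole proof then reduces to locating the possible poles of the right-hand side at $s=0$.

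First I would dispose of the easy pieces. The prefactor $1/(2s+2n-1)$ is holomorphic and non-zero at $s=0$ because $n\ge1$. A typical denominator in the sum vanishes at $s=0$ exactly when $\rho=n$ or $\rho=1-n$; since $0\le\rho\le1$ and $n$ is a positive integer, this can only happen for $n=1$, with $\rho\in\{0,1\}$. Hence for every $n\ge2$ the sum is holomorphic at $s=0$; moreover in that range $s+n$ stays in the region $\operatorname{Re}w>1$, where $\varphi$ has neither zeros nor poles (by the analytic properties of $\varphi$ recalled before \eqref{scattering}, together with $\varphi(w)\varphi(1-w)=1$), so $\frac{\varphi'}{\varphi}(s+n)$ is holomorphic at $s=0$ as well. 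Thus for $n\ge2$ the function $\Sigma$ is actually holomorphic at $s=0$, and its residue there vanishes trivially.

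The case $n=1$ is the only delicate one, and I expect the sign and order bookkeeping there to be the main obstacle. Now $\alpha=s+\tfrac12$, the denominators become $(s+1-\rho)(s+\rho)$, and genuine simple poles at $s=0$ can arise both from $\frac{1}{2s+1}\frac{\varphi'(s+1)}{\varphi(s+1)}$ (when $\varphi$ has a zero or a pole at $w=1$) and from the two rational terms attached to $\rho=0$ and $\rho=1$. The key input is the functional equation $\varphi(w)\varphi(1-w)=1$: if $\varphi$ has order $m$ at $w=1$ (convention: $m>0$ for a zero, $m<0$ for a pole), then it has order $-m$ at $w=0$. Substituting this into the displayed formula and computing the three residues at $s=0$ — one from the logarithmic-derivative term and one from each of the $\rho=0$ and $\rho=1$ terms — I expect them to cancel in pairs, leaving residue $0$. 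The only care needed is to match the sign convention of \eqref{scattering} (whether "$\operatorname{order}(\rho)$" is signed, and which of $\rho=0,1$ actually contributes for a given sign of $m$) so that the cancellation is exact; once the conventions are pinned down the computation is a one-liner, and it completes the proof.
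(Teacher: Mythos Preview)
Your treatment of the case $n=1$ is essentially the paper's argument and is fine. The gap is in the case $n\ge 2$.

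You assert that for $n\ge 2$ the point $s+n$ lies in $\operatorname{Re}w>1$, ``where $\varphi$ has neither zeros nor poles''. The passage before \eqref{scattering} only says that $\varphi$ is \emph{holomorphic} on $\operatorname{Re}s\ge\tfrac12$ apart from finitely many poles in $(\tfrac12,1]$; it says nothing about zeros. In fact, by \eqref{eqphi} one has
\[
\varphi(s;n)=\Bigl(\tfrac{\Gamma(s)^2}{\Gamma(s+n)\Gamma(s-n)}\Bigr)^{q}\varphi(s;0),
\]
and the factor $1/\Gamma(s-n)$ forces a zero of $\varphi(s;n)$ at $s=n$ (of order $q$, generically). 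Hence $\tfrac{\varphi'}{\varphi}(s+n)$ is \emph{not} holomorphic at $s=0$, and $\Sigma_1$ does contribute a nonzero residue there.

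There is a companion error in your reading of \eqref{scattering}: the first sum is over poles $\rho$ of $\varphi$ with $\operatorname{Re}\rho<\tfrac12$, not over points of $[0,\tfrac12)$. The sentence before \eqref{scattering} restricts the \emph{zeros} of $\varphi$ on $\operatorname{Re}s<\tfrac12$ to $[0,\tfrac12)$, not the poles. By the functional equation, the zero of $\varphi$ at $s=n$ corresponds to a pole at $s=1-n\le -1$, and this $\rho=1-n$ does appear in the sum; its term has a simple pole at $s=0$ whose residue exactly cancels that of $\Sigma_1$. That cancellation is precisely what the paper's proof records. So for $n\ge 2$ you cannot conclude that $\Sigma$ is holomorphic at $s=0$; you must instead compute the two residues and observe that they cancel, just as you (correctly) do for $n=1$.
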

\begin{proof}
According to \eqref{scattering}, $\Sigma(s)$ is a sum of three terms. We denote these three terms by $\Sigma_1(s)$, $\Sigma_2(s)$ and $\Sigma_3(s)$ respectively.

We need to discuss the case where $n=1$ and $n\geq 2$ separately.

When $n=1$, the residue of $\Sigma_1(s)$ at $s=0$ is equal to $-n_0$, where $n_0$ is the order of pole of~$\varphi(s)$ at $s=1$, which is equal to the order of zero of $\varphi(s)$ at $s=0$. Since $\varphi(s)$ does not have pole at $s=0$, the residue of $\Sigma_2(s)$ at $s=0$ is 0. Since $\varphi(s)$ has a pole of order $n_0$ at $s=1$, the residue of $\Sigma_3(s)$ at $s=0$ is $n_0$. This shows that the residue of $\Sigma(s)$ at $s=0$ is 0.

When $n\geq 2$, the residue of $\Sigma_1(s)$ is equal to $\frac{1}{2n-1}$ times the order of zero of $\varphi(s)$ at $s=n$. Since $\varphi(s)$ does not have pole when $\operatorname{Re}s>1$, the residue of $\Sigma_3(s)$ at $s=0$ is $0$. For $\Sigma_2(s)$, the order of zero of $\varphi(s)$ at $s=n$ is equal to the order of pole of $\varphi(s)$ at $1-n$. Therefore, the residue of $\Sigma_2(s)$ at $s=0$ is the negative of the residue of $\Sigma_1(s)$ at $s=0$. This proves the assertion that the residues of $\Sigma(s)$ at $s=0$ is 0.
\end{proof}

\subsection[The term Xi P]{The term $\boldsymbol{\Xi_P}$}
Collecting the results from Appendices~\ref{par1} and~\ref{par2}, we find that the parabolic contribution
\begin{gather*}
\Xi_P=\lim_{Y\rightarrow\infty}\Bigg\{\iint_{F^Y} \sum_{\substack{\gamma\in\Gamma\\\gamma \text{is parabolic}}}k(\gamma z,z)\gamma'(z)^ny^{2n}{\rm d}\mu(z)
\\ \hphantom{\Xi_P=\lim_{Y\rightarrow\infty}\Bigg\{}
{}-\frac{1}{4\pi}\sum_{j=1}^q\int_{-\infty}^{\infty}\widetilde{\Lambda}(r)
\iint_{F^Y}\bigg|E_j\bigg(z,\frac{1}{2}+{\rm i}r;n\bigg)\bigg|^2y^{2n}{\rm d}\mu(z)\,{\rm d}r\Bigg\}
\end{gather*}
is given by $\Xi_P=\mathscr{E}_P(s)-\mathscr{E}_P(a)$, where
\begin{align*}
\mathscr{E}_P(s)={}&\frac{1}{(2s+2n-1)^2} \bigg[q-\operatorname{Tr} \Phi\bigg(\frac{1}{2}\bigg) \bigg]+\frac{1}{4\pi}\int_{-\infty}^{\infty}
\frac{1}{r^2+\left(s+n-\frac{1}{2}\right)^2}\frac{\varphi'}{\varphi}
\bigg(\frac{1}{2}+{\rm i}r\bigg){\rm d}r
\\
&+\frac{q}{2(2s+2n-1)}\bigg\{\psi(s)+\psi(s+2n)-2\log 2-2\psi\bigg(s+n+\frac{1}{2}\bigg)-2\psi(s+n)\bigg\}.
\end{align*}

\section{Maass--Selberg relation}\label{maass}
The Maass--Selberg relation is very important in the computation of the contribution from the absolutely continuous spectrum. The results in this part follows almost the same as the case $n=0$ as presented in \cite{Iwaniec2002}.

Given a function $f$ in $H_n^2(\Gamma)$ such that $\Delta_n f=\lambda f$, it has the following Fourier expansion around the cusp associated to $\kappa_j$.
\begin{gather*}f(\sigma_j z)\sigma_j'(z)^n=\sum_{k=-\infty}^{\infty} f_{k}^{(j)}(y){\rm e}^{2\pi{\rm i} kx}.\end{gather*}
We define
\begin{gather*}
f^{Y}(z)=\begin{cases} f(z), & z\in F^Y,
\\
f(z)-f_0^{(j)}\big(\sigma_j^{-1}z\big), & z\in F_j^Y.\end{cases}
\end{gather*}
\begin{Theorem}\label{theorem_A1}
If $f$ and $g$ are functions in $H_n^2(\Gamma)$ such that
\begin{gather*}
\Delta_n f= \lambda_1 f,\qquad\Delta_n g=\lambda_2 g.
\end{gather*}
Then
\begin{gather*}
(\lambda_1-\lambda_2)\iint_{F}f^Y\overline{g^Y}y^{2n-2}{\rm d}x\,{\rm d}y=-Y^{2n}\sum_{j=1}^q\Big(f_0^{(j)}(Y)\overline{g_0^{(j)\prime}(Y)}-f_0^{(j)\prime}(Y)
\overline{g_0^{(j)}(Y)}\Big).
\end{gather*}
\end{Theorem}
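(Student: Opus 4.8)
The plan is to prove this as a Green's-type identity: integrate the bilinear density $\big[(\Delta_n f)\overline g - f\,\overline{\Delta_n g}\,\big]y^{2n-2}\,{\rm d}x\,{\rm d}y$ over the pieces of $F$ on which $f^Y,g^Y$ are smooth and apply Stokes' theorem. The whole point is that replacing $f$ by $f^Y$ introduces a jump in the normal derivative across each horocycle $\{y=Y\}$, and this jump is precisely what produces the right-hand side; all the other boundary terms cancel.

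First I would record the explicit shape $\Delta_n = -y^2(\partial_x^2+\partial_y^2) + 2{\rm i}ny\,\partial_x - 2ny\,\partial_y$ (obtained from $\Delta_n = -4y^{2-2n}\partial_z y^{2n}\partial_{\bar z}$) and verify by a direct computation that
\begin{gather*}
\big[(\Delta_n f)\overline g - f\,\overline{\Delta_n g}\,\big]y^{2n-2}\,{\rm d}x\wedge{\rm d}y = {\rm d}\omega_{f,g},\qquad \omega_{f,g}=Q\,{\rm d}x - P\,{\rm d}y,
\end{gather*}
where $P = y^{2n}(f_x\overline g - f\overline g_x) - 2{\rm i}ny^{2n-1}f\overline g$ and $Q = y^{2n}(f_y\overline g - f\overline g_y)$; the two first-order terms of $\Delta_n$ are exactly what cancel the stray $\partial_y(y^{2n})$ contribution and turn the $\partial_x$-part into an exact derivative, so the primitive takes this simple form. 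The other ingredient is the behaviour of the zeroth Fourier coefficient: written in the coordinate at the cusp $\kappa_j$, the restriction of $\Delta_n$ to the $k=0$ mode is $-\big(y^2\tfrac{{\rm d}^2}{{\rm d}y^2}+2ny\tfrac{{\rm d}}{{\rm d}y}\big)$, so $f_0^{(j)}(y)$ solves the Euler equation $y^2h''+2nyh'+\lambda_1 h=0$; equivalently, viewing $f_0^{(j)}$ as a function of $y$ on $\mathbb{H}$, it is itself an eigenfunction of $\Delta_n$ with eigenvalue $\lambda_1$. Hence on each cusp strip $F_j^Y$ the truncated function $f^Y = f - f_0^{(j)}(\sigma_j^{-1}\,\cdot\,)$ remains an eigenfunction of $\Delta_n$ with eigenvalue $\lambda_1$ (and likewise $g^Y$), even though $f^Y$ is not $C^1$ across $\{y=Y\}$.

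Next I would split $F$ into the compact core $F^Y$ and the $q$ cusp strips $F_j^Y$ (disjoint embedded strips of width one once $Y$ is large), apply Stokes on each piece, and add. On the left, using $\Delta_n f^Y = \lambda_1 f^Y$, $\overline{\Delta_n g^Y} = \lambda_2\overline{g^Y}$, and the reality of $\lambda_1,\lambda_2$ (valid since $\Delta_n$ is self-adjoint on $H_n^2(\Gamma)$, and in particular for the Eisenstein eigenpackets $E_j(z,\tfrac{1}{2}+{\rm i}r;n)$, to which the relation is ultimately applied by a limiting argument), the pieces add up to $(\lambda_1-\lambda_2)\iint_F f^Y\overline{g^Y}y^{2n-2}\,{\rm d}x\,{\rm d}y$. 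On the right, the contributions along the $\Gamma$-paired geodesic sides of $F$ cancel in pairs because ${\rm d}\omega_{f,g}$ is $\Gamma$-invariant and $\omega_{f,g}$ is built ${\rm PSL}(2,\mathbb{R})$-equivariantly from $f,g$; the two vertical sides of each cusp strip cancel by periodicity; and the contribution at $y\to\infty$ vanishes since, once the zeroth Fourier mode is removed, the remainder decays exponentially. What survives is, for each $j$, the arc $\{y=Y\}$, which appears once from $\partial F^Y$ carrying $\omega_{f,g}$ and once from $\partial F_j^Y$ carrying $\omega_{f^Y,g^Y}$ with opposite induced orientation; since ${\rm d}y=0$ there, only the $Q$-term contributes, and the net contribution from the $j$-th cusp reduces, in the coordinate $w=\sigma_j^{-1}z$ with $h=f(\sigma_j w)\sigma_j'(w)^n$, $\tilde h=g(\sigma_j w)\sigma_j'(w)^n$, and $h_0,\tilde h_0$ their zeroth Fourier coefficients, to $\pm\int_0^1\big[Q_{h,\tilde h} - Q_{h-h_0,\,\tilde h-\tilde h_0}\big]{\rm d}x$ taken at $y=Y$.

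Finally I would expand that integrand — it is bilinear in the pair of functions up to conjugation — and integrate over $x\in[0,1]$, using $\int_0^1 h(x+{\rm i}y)\,{\rm d}x = h_0(y)$ and the orthogonality of the characters ${\rm e}^{2\pi{\rm i}kx}$; all the $k\neq 0$ terms cancel, leaving $y^{2n}\big(h_0'\,\overline{\tilde h_0} - h_0\,\overline{\tilde h_0'}\big)\big|_{y=Y}$, which, since $h_0=f_0^{(j)}$ and $\tilde h_0=g_0^{(j)}$, is $Y^{2n}\big(f_0^{(j)\prime}(Y)\overline{g_0^{(j)}(Y)} - f_0^{(j)}(Y)\overline{g_0^{(j)\prime}(Y)}\big)$. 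Summing over $j$ with the correct orientation sign gives the claimed formula. The main obstacle is the boundary bookkeeping: verifying that $\omega_{f,g}$ really is ${\rm PSL}(2,\mathbb{R})$-equivariant so that the geodesic side-pairings cancel and so that $\omega_{f,g}$ along $\{y=Y\}\subset\partial F^Y$ transports to $\omega_{h,\tilde h}$ under $\sigma_j$, together with keeping every orientation consistent so that the global sign comes out as $-Y^{2n}\sum_j(\cdots)$; the interior manipulations are routine once the divergence form of the density and the Euler equation for the zeroth coefficient are in hand.
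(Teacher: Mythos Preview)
Your proposal is correct and follows essentially the same Green's-identity approach as the paper. The only cosmetic difference is that the paper integrates over $F^Y$ alone and then handles the $k\neq 0$ boundary terms via a Wronskian identity on the cusp strips, whereas you apply Stokes on $F^Y$ and each $F_j^Y$ simultaneously and let the nonzero Fourier modes cancel directly between the two horocycle contributions; these are the same computation packaged differently.
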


\begin{proof}
\begin{align*}
(\lambda_1-\lambda_2)\iint_{F^Y}f\bar{g}y^{2n-2}{\rm d}x\,{\rm d}y
={}&\iint_{F^Y}\Delta_nf\bar{g}y^{2n-2}{\rm d}x\,{\rm d}y-\iint_{F^Y}f\overline{\Delta_n g}y^{2n-2}{\rm d}x\,{\rm d}y
 \\
={}&-2{\rm i}\iint_{F^Y}\bigg\{
\bigg(\frac{\pa}{\pa z}y^{2n}\frac{\pa}{\pa\bar{z}}f\bigg)\bar{g}-f \bigg(\frac{\pa}{\pa \bar{z}}y^{2n}\frac{\pa}{\pa z}\bar{g}\bigg)\bigg\}{\rm d}z\wedge {\rm d}\bar{z}
\\
={}&-2{\rm i}\int_{\pa F^Y}\bigg(
 y^{2n}\frac{\pa f}{\pa\bar{z}} \bar{g}{\rm d}\bar{z}+f y^{2n}\frac{\pa \bar{g}}{\pa z} {\rm d}z\bigg)
 \\
={}&\sum_{j=1}^q -2{\rm i}\!\int_{\sigma_j^{-1}\pa F_j^Y}\!\!\bigg(
 y^{2n}\frac{\pa f}{\pa\bar{z}}(\sigma_j z) \sigma_j'(z)^n\overline{\sigma_j'(z)}\;\overline{g(\sigma_j z)\sigma'(z)^n}\;{\rm d}\bar{z}
 \\
&+f(\sigma_j z)\sigma_j'(z)^n y^{2n}\overline{\frac{\pa g}{\pa \bar{z}}(\sigma_j z) \sigma_j'(z)^n\overline{\sigma_j'(z)}}\;{\rm d}z\bigg)
 \\
={}&Y^{2n}\sum_{j=1}^q 2i\int_0^1\bigg(
 \frac{\pa f}{\pa\bar{z}}(\sigma_j z) \sigma_j'(z)^n\overline{\sigma_j'(z)}\;\overline{g(\sigma_j z)\sigma'(z)^n}\;{\rm d}x
 \\
 & +f(\sigma_j z)\sigma_j'(z)^n \overline{\frac{\pa g}{\pa \bar{z}}(\sigma_j z) \sigma_j'(z)^n\overline{\sigma_j'(z)}}\;{\rm d}x\bigg).
\end{align*}
Notice that
\begin{gather*}
\frac{\pa f}{\pa\bar{z}}(\sigma_j z) \sigma_j'(z)^n\overline{\sigma_j'(z)}= \sum_{k=-\infty}^{\infty}\pi{\rm i} kf_{k}^{(j)}(y){\rm e}^{2\pi{\rm i} kx}
+\frac{\rm i} {2}\sum_{k=-\infty}^{\infty}\frac{\pa f_{k}^{(j)}}{\pa y}(y){\rm e}^{2\pi{\rm i} kx}.
\end{gather*}
Hence,
\begin{gather*}
(\lambda_1-\lambda_2)\iint_{F^Y}f\bar{g}y^{2n-2}{\rm d}x\,{\rm d}y= -Y^{2n}\sum_{j=1}^q\sum_{k=-\infty}^{\infty}\bigg(\frac{\pa f_{k}^{(j)}}{\pa y}(Y)\overline{g_{k}^{(j)}(Y)}-f_{k}^{(j)}(Y)\overline{\frac{\pa g_{k}^{(j)}}{\pa y}(Y)}\bigg).
\end{gather*}
Applying $\Delta_n f=\lambda f$, where $\lambda=-(s-n)(s+n-1)$, to the Fourier expansion, we find that
\begin{gather*}
y^2\sum_{k=-\infty}^{\infty} (2\pi k)^2f_k^{(j)}(y){\rm e}^{2\pi{\rm i} k x}-y^2\sum_{k=-\infty}^{\infty} \frac{\pa^2 f_k^{(j)}}{\pa y^2}(y){\rm e}^{2\pi{\rm i} k x}
-2ny\sum_{k=-\infty}^{\infty} 2\pi kf_k^{(j)}(y){\rm e}^{2\pi{\rm i} k x}
\\ \qquad
{}-2ny\sum_{k=-\infty}^{\infty} \frac{\pa f_k^{(j)}}{\pa y}(y){\rm e}^{2\pi{\rm i} k x}
=-(s-n)(s+n-1)\sum_{k=-\infty}^{\infty} f_k^{(j)}(y){\rm e}^{2\pi{\rm i} k x}.
\end{gather*}
Hence,
\begin{gather*}
y^2\frac{\pa^2 f_k^{(j)}}{\pa y^2}(y)+2ny\frac{\pa f_k^{(j)}}{\pa y}(y)- \left(4\pi k^2y^2-4\pi n k y+(s-n)(s+n-1)\right)f_k^{(j)}=0.
\end{gather*}
This implies that
\begin{gather*}
f_0^{(j)}(y)=\alpha_0y^{s-n}+\beta_0y^{1-s-n}.
\end{gather*}
On the other hand,
\begin{gather*}
\frac{\pa}{\pa y} y^{2n}\bigg(\frac{\pa f_{k}^{(j)}}{\pa y}\overline{g_{k}^{(j)}}-f_{k}^{(j)}\overline{\frac{\pa g_{k}^{(j)}}{\pa y}}\bigg) =y^{2n-2}[(s_1\!-n)(s_1+n\!-1)-(s_2\!-n)(s_2+n\!-1)]f_k^{(j)}\overline{g_k^{(j)}}.
\end{gather*}
Hence, if $k\neq 0$,
\begin{gather*}
-Y^{2n}\sum_{j=1}^q\sum_{k\neq 0}\bigg(\frac{\pa f_{k}^{(j)}}{\pa y}(Y)\overline{g_{k}^{(j)}(Y)}-f_{k}^{(j)}(Y)\overline{\frac{\pa g_{k}^{(j)}}{\pa y}(Y)}\bigg)
\\ \qquad
{}=-(\lambda_1-\lambda_2)\sum_{j=1}^q\sum_{k\neq 0}\int_Y^{\infty} y^{2n-2}f_k^{(j)}\overline{g_k^{(j)}}{\rm d}y
=-(\lambda_1-\lambda_2)\sum_{j=1}^q\iint_{F_j^Y}f^Y\bar{g}^Yy^{2n-2}{\rm d}x\,{\rm d}y.
\end{gather*}
It follows that
\begin{align*}
(\lambda_1-\lambda_2)\iint_{F^Y}f\bar{g}y^{2n-2}{\rm d}x\,{\rm d}y={}&-Y^{2n}\sum_{j=1}^q
\bigg(\frac{\pa f_{0}^{(j)}}{\pa y}(Y)\overline{g_{0}^{(j)}(Y)}-f_{0}^{(j)}(Y)\overline{\frac{\pa g_{0}^{(j)}}{\pa y}(Y)}\bigg)
\\
&- (\lambda_1-\lambda_2)\sum_{j=1}^q\iint_{F_j^Y}f^Y\bar{g}^Yy^{2n-2}{\rm d}x\,{\rm d}y.
\end{align*}
Hence,
\begin{gather*}
(\lambda_1-\lambda_2)\iint_{F}f^Y\overline{g^Y}y^{2n-2}{\rm d}x\,{\rm d}y=-Y^{2n}\sum_{j=1}^q\Big(f_0^{(j)}(Y)\overline{g_0^{(j)\prime}(Y)}-f_0^{(j)\prime}(Y)
\overline{g_0^{(j)}(Y)}\Big).
\tag*{\qed}
\end{gather*}
\renewcommand{\qed}{}
\end{proof}

\begin{Theorem}[Maass--Selberg relation] If $s_1$ and $s_2$ are regular points of the Eisenstein series $E_i(z,s_1;n)$ and $E_i(z,s_2;n)$, $s_1\neq \bar{s}_2$ and $s_1+\bar{s}_2\neq 1$, then
\begin{gather*}
\iint_F E_i^Y(z,s_1;n)\overline{E_j^Y(z,s_2;n)}y^{2n-2}{\rm d}x\,{\rm d}y
\\ \qquad
{}=\delta_{ij}\frac{1}{s_1+\overline{s_2}-1}Y^{s_1+\overline{s_2}-1}
+\frac{1}{\overline{s_2}-s_1}\varphi_{ij}(s_1)Y^{\overline{s_2}-s_1}
+\frac{1}{s_1-\overline{s_2}}\overline{\varphi_{ji}(s_2)}Y^{s_1-\overline{s_2}}
\\ \qquad \phantom{=}
-\frac{1}{s_1+\overline{s_2}-1}\sum_{k=1}^q\varphi_{ik}(s_1)
\overline{\varphi_{jk}(s_2)}Y^{1-s_1-\overline{s_2}}.
\end{gather*}
\end{Theorem}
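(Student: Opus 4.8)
The plan is to read the Maass--Selberg relation off the Green-type identity of Theorem~\ref{theorem_A1}, applied to the two Eisenstein series $f=E_i(\,\cdot\,,s_1;n)$ and $g=E_j(\,\cdot\,,s_2;n)$. At regular points these are honest eigenfunctions of $\Delta_n$, with $\Delta_n f=\lambda_1 f$, $\Delta_n g=\lambda_2 g$, where $\lambda_1=-(s_1-n)(s_1+n-1)$ and $\lambda_2=-(s_2-n)(s_2+n-1)$. Tracking the complex conjugation on the second factor exactly as in the proof of Theorem~\ref{theorem_A1}, the scalar that multiplies $\iint_F E_i^Y\overline{E_j^Y}\,y^{2n-2}\,{\rm d}x\,{\rm d}y$ on the left is $\lambda_1-\overline{\lambda_2}$, and a one-line manipulation gives the factorization
\[
\lambda_1-\overline{\lambda_2}=(\overline{s_2}-s_1)\bigl(s_1+\overline{s_2}-1\bigr),
\]
which is nonzero precisely under the hypotheses $s_1\neq\overline{s_2}$ and $s_1+\overline{s_2}\neq1$, so that one may divide by it at the end. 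The remaining work is to make the boundary term on the right-hand side of Theorem~\ref{theorem_A1} explicit.

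Here I would substitute the constant Fourier coefficients of the Eisenstein series at each cusp $\kappa_l$, which are exactly the terms surviving in the $y\to\infty$ expansion derived earlier, namely $f_0^{(l)}(y)=\delta_{il}y^{s_1-n}+\varphi_{il}(s_1)y^{1-s_1-n}$ and $g_0^{(l)}(y)=\delta_{jl}y^{s_2-n}+\varphi_{jl}(s_2)y^{1-s_2-n}$ (consistent with the form $\alpha_0y^{s-n}+\beta_0y^{1-s-n}$ obtained in the proof of Theorem~\ref{theorem_A1}), so that $\overline{g_0^{(l)}(y)}=\delta_{jl}y^{\overline{s_2}-n}+\overline{\varphi_{jl}(s_2)}\,y^{1-\overline{s_2}-n}$. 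Expanding the Wronskian $f_0^{(l)}(Y)\overline{g_0^{(l)\prime}(Y)}-f_0^{(l)\prime}(Y)\overline{g_0^{(l)}(Y)}$ bilinearly and using the elementary identity $y^p(y^q)'-(y^p)'y^q=(q-p)y^{p+q-1}$, multiplication by $-Y^{2n}$ produces four monomials in $Y$, with exponents $s_1+\overline{s_2}-1$, $s_1-\overline{s_2}$, $\overline{s_2}-s_1$, $1-s_1-\overline{s_2}$. Summing over the cusps $l$ contracts the coefficients via $\sum_l\delta_{il}\delta_{jl}=\delta_{ij}$, $\sum_l\delta_{il}\overline{\varphi_{jl}(s_2)}=\overline{\varphi_{ji}(s_2)}$ and $\sum_l\varphi_{il}(s_1)\delta_{jl}=\varphi_{ij}(s_1)$ (using the symmetry $\Phi(s)^T=\Phi(s)$), and $\sum_l\varphi_{il}(s_1)\overline{\varphi_{jl}(s_2)}=\sum_k\varphi_{ik}(s_1)\overline{\varphi_{jk}(s_2)}$.

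Finally I would divide through by $(\overline{s_2}-s_1)(s_1+\overline{s_2}-1)$. The four numerical prefactors $q-p$ that arise, namely $\overline{s_2}-s_1$, $1-s_1-\overline{s_2}$, $s_1+\overline{s_2}-1$, $s_1-\overline{s_2}$, each cancel one factor of the divisor, leaving the coefficients $\frac{1}{s_1+\overline{s_2}-1}$, $\frac{1}{s_1-\overline{s_2}}$, $\frac{1}{\overline{s_2}-s_1}$, $-\frac{1}{s_1+\overline{s_2}-1}$ attached to the four monomials, which is exactly the asserted identity. I expect no genuine obstacle: the content is Theorem~\ref{theorem_A1} together with the already-known constant terms, and the only place demanding care is the bookkeeping of the four boundary Wronskians --- matching each power of $Y$ with the correct entry of the scattering matrix, invoking $\Phi(s)^T=\Phi(s)$ at the right moment, and fixing the overall sign inherited from Theorem~\ref{theorem_A1} once the conjugation on $g$ is accounted for.
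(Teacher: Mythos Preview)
Your proposal is correct and follows essentially the same route as the paper: apply Theorem~\ref{theorem_A1} to $f=E_i(\cdot,s_1;n)$ and $g=E_j(\cdot,s_2;n)$, insert the zeroth Fourier coefficients $\delta_{il}y^{s-n}+\varphi_{il}(s)y^{1-s-n}$ at each cusp, expand the boundary Wronskians, and divide by $\lambda_1-\overline{\lambda_2}=(\overline{s_2}-s_1)(s_1+\overline{s_2}-1)$. One small remark: the symmetry $\Phi(s)^T=\Phi(s)$ is not actually needed for the contractions you write, since $\sum_l\delta_{il}\overline{\varphi_{jl}(s_2)}=\overline{\varphi_{ji}(s_2)}$ and $\sum_l\varphi_{il}(s_1)\delta_{jl}=\varphi_{ij}(s_1)$ follow immediately from the Kronecker deltas.
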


\begin{proof}
By Theorem \ref{theorem_A1}, we have
\begin{gather*}
\iint_F E_i^Y(z,s_1;n)\overline{E_j^Y(z,s_2;n)}y^{2n-2}{\rm d}x\,{\rm d}y
=\frac{Y^{2n}}{-(s_1-n)(s_1+n-1)+(\overline{s_2}-n)(\overline{s_2}+n-1)}
\\ \qquad
{}\times\sum_{k=1}^q\Big([\delta_{ik}Y^{s_1-n}\!+\!\varphi_{ik}(s_1)Y^{1-s_1-n}]
\overline{[\delta_{jk}(s_2\!-n)Y^{s_2\!n\!-1}+(1\!-s_2\!-n) \varphi_{jk}(s_2)Y^{-s_2\!-n}]}
\\ \qquad\phantom{\times}
{}-[\delta_{ik}(s_1-n)Y^{s_1-n-1}+(1-s_1-n)\varphi_{ik}(s_1)Y^{-s_1-n}]
\overline{[\delta_{jk}Y^{s_2-n}+ \varphi_{jk}(s_2)Y^{1-s_2-n}]} \Big)
\\ \qquad
{}=\delta_{ij}\frac{1}{s_1+\overline{s_2}-1}Y^{s_1+\overline{s_2}-1}
+\frac{1}{\overline{s_2}-s_1}\varphi_{ij}(s_1)Y^{\overline{s_2}-s_1}
+\frac{1}{s_1-\overline{s_2}}\overline{\varphi_{ji}(s_2)}Y^{s_1-\overline{s_2}}
\\ \qquad\phantom{=}
{}-\frac{1}{s_1+\overline{s_2}-1}\sum_{k=1}^q\varphi_{ik}(s_1)
\overline{\varphi_{jk}(s_2)}Y^{1-s_1-\overline{s_2}}.
\tag*{\qed}
\end{gather*}
\renewcommand{\qed}{}
\end{proof}

\begin{Theorem}As $Y\rightarrow\infty$,
\begin{gather*}
\sum_{j=1}^q
\iint_{F^Y}\bigg|E_j\bigg(z,\frac{1}{2}+{\rm i}r;n\bigg) \bigg|^2y^{2n}{\rm d}\mu(z)
\\ \qquad
{}=\frac{1}{2{\rm i}r}\operatorname{Tr}\bigg[\Phi\bigg(\frac{1}{2}-{\rm i}r\bigg)Y^{2{\rm i}r}-\Phi\bigg(\frac{1}{2}+{\rm i}r\bigg)Y^{-2{\rm i}r}\bigg]+2q\log Y-\frac{\varphi'}{\varphi}\bigg(\frac{1}{2}+{\rm i}r\bigg).
\end{gather*}
\end{Theorem}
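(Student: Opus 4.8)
The plan is to obtain the formula from the Maass--Selberg relation of the preceding theorem by specializing the two spectral parameters to the critical line and then passing to a coincidence limit; this mirrors the $n=0$ argument in \cite{Iwaniec2002}.

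\textbf{Step 1 (specialize and sum).} I would apply the Maass--Selberg relation with $s_1=\tfrac12+{\rm i}r_1$ and $s_2=\tfrac12+{\rm i}r_2$ for real $r_1,r_2$ near $r$ (with $r\neq0$, so that $\tfrac12+{\rm i}r$ is a regular point of every Eisenstein series), set $i=j$, and sum over $j$. Since $\overline{s_2}=\tfrac12-{\rm i}r_2$, the four exponents there collapse to $\pm{\rm i}(r_1-r_2)$ and $\pm{\rm i}(r_1+r_2)$, and using $\sum_j\delta_{jj}=q$, $\sum_j\varphi_{jj}=\operatorname{Tr}\Phi$, together with $\sum_{j,k}\varphi_{jk}(s_1)\overline{\varphi_{jk}(s_2)}=\operatorname{Tr}[\Phi(s_1)\Phi(\overline{s_2})]$ (from $\Phi^{T}=\Phi$ and $\overline{\Phi(s)}=\Phi(\bar s)$), the right-hand side splits into a ``diagonal'' term $\dfrac{q\,Y^{{\rm i}(r_1-r_2)}-\operatorname{Tr}[\Phi(s_1)\Phi(\overline{s_2})]\,Y^{-{\rm i}(r_1-r_2)}}{{\rm i}(r_1-r_2)}$ and two ``off-diagonal'' terms $\dfrac{\operatorname{Tr}\Phi(s_1)\,Y^{-{\rm i}(r_1+r_2)}}{-{\rm i}(r_1+r_2)}+\dfrac{\overline{\operatorname{Tr}\Phi(s_2)}\,Y^{{\rm i}(r_1+r_2)}}{{\rm i}(r_1+r_2)}$.

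\textbf{Step 2 (the coincidence limit).} Next I would let $r_1,r_2\to r$. For the off-diagonal terms this is immediate: $r_1+r_2\to2r$ and $\overline{\operatorname{Tr}\Phi(\tfrac12+{\rm i}r)}=\operatorname{Tr}\Phi(\tfrac12-{\rm i}r)$, so they combine to $\tfrac{1}{2{\rm i}r}\operatorname{Tr}\big[\Phi(\tfrac12-{\rm i}r)Y^{2{\rm i}r}-\Phi(\tfrac12+{\rm i}r)Y^{-2{\rm i}r}\big]$. The diagonal term is a genuine $0/0$: keeping $r_1=r$ fixed and writing $\delta=r_1-r_2$, so $\overline{s_2}=(1-s_1)+{\rm i}\delta$, set $g(\delta)=\operatorname{Tr}[\Phi(s_1)\Phi(1-s_1+{\rm i}\delta)]$. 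The functional equation $\Phi(s)\Phi(1-s)=I_q$ gives $g(0)=q$; differentiating it gives $\Phi(s_1)\Phi'(1-s_1)=\Phi'(s_1)\Phi(1-s_1)$, whence by Jacobi's formula and $\Phi(1-s_1)=\Phi(s_1)^{-1}$ one gets $g'(0)={\rm i}\,\tfrac{\varphi'}{\varphi}(s_1)$. Feeding $g(\delta)=q+{\rm i}\tfrac{\varphi'}{\varphi}(s_1)\delta+O(\delta^2)$ and $Y^{\pm{\rm i}\delta}=1\pm{\rm i}\delta\log Y+O(\delta^2)$ into the quotient shows the diagonal term tends to $2q\log Y-\tfrac{\varphi'}{\varphi}(\tfrac12+{\rm i}r)$. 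Since the truncated Eisenstein series $E_j^Y(z,\cdot\,;n)$ lies in $L^2(F)$ and varies continuously (indeed holomorphically) with the spectral parameter near the critical line, the limit may be passed through the regularized inner product $\iint_F E_j^Y(z,s_1;n)\overline{E_j^Y(z,s_2;n)}\,y^{2n-2}{\rm d}x\,{\rm d}y$, giving the asserted formula with $\sum_j\iint_F|E_j^Y(z,\tfrac12+{\rm i}r;n)|^2 y^{2n-2}{\rm d}x\,{\rm d}y$ on the left.

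\textbf{Step 3 (return to $F^Y$).} Finally, by the definition of $E_j^Y$ the left-hand side above differs from $\sum_j\iint_{F^Y}|E_j|^2 y^{2n}{\rm d}\mu(z)=\sum_j\iint_{F^Y}|E_j|^2 y^{2n-2}{\rm d}x\,{\rm d}y$ only by the integral, over the truncated cusp neighbourhoods $\bigcup_kF_k^Y$, of the squared modulus of the non-constant Fourier modes of $E_j$ at each cusp; these modes decay exponentially in $y$, so this correction is $O({\rm e}^{-cY})$ and disappears in the regime $Y\to\infty$. This gives the theorem. I expect \textbf{Step 2} to be the only real obstacle -- the cancellation inside the $0/0$ diagonal term and, especially, the identification of its finite part with $-\tfrac{\varphi'}{\varphi}(\tfrac12+{\rm i}r)$ via $\Phi(s)\Phi(1-s)=I_q$ and the logarithmic derivative of $\det\Phi$; collapsing the exponents, summing the diagonal, and discarding the exponentially small cuspidal remainder are routine.
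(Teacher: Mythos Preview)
Your argument is correct and follows essentially the same route as the paper: specialize the Maass--Selberg relation, pass to a coincidence limit on the critical line, resolve the $0/0$ ``diagonal'' term via the functional equation $\Phi(s)\Phi(1-s)=I_q$ and Jacobi's formula, and discard the exponentially small non-constant Fourier tails in the cusps. The only cosmetic difference is the direction of approach---the paper sets $s_1=s_2=\sigma+{\rm i}r$ and lets $\sigma\to\tfrac12$, whereas you stay on the critical line with $s_1=\tfrac12+{\rm i}r_1$, $s_2=\tfrac12+{\rm i}r_2$ and let $r_1,r_2\to r$; both limits are legitimate and lead to the same identification of the finite part with $-\tfrac{\varphi'}{\varphi}\big(\tfrac12+{\rm i}r\big)$.
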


\begin{proof}
From the proof of the previous theorem, we find that
\begin{align*}
\iint_{F^Y}\bigg|E_j\bigg(z,\frac{1}{2}+{\rm i}r;n\bigg) \bigg|^2y^{2n}{\rm d}\mu(z)
={}&\iint_{F}\bigg|E_j^Y\bigg(z,\frac{1}{2}+{\rm i}r;n\bigg) \bigg|^2y^{2n}{\rm d}\mu(z)
\\
&+\text{exponentially decaying terms}.
\end{align*}
Setting $s_1=s_2=\sigma+{\rm i}r$ in the previous theorem, we find that
\begin{align*}
\iint_{F}\bigg|E_j^Y\bigg(z,\frac{1}{2}+{\rm i}r;n\bigg)\bigg|^2y^{2n}{\rm d}\mu(z)
={} &\frac{1}{2\sigma-1}Y^{2\sigma-1}-\frac{1}{2{\rm i}r}\varphi_{jj}(\sigma+{\rm i}r)Y^{-2{\rm i}r}
\\
&+\frac{1}{2{\rm i}r}\overline{\varphi_{jj}(\sigma+{\rm i}r)}Y^{2{\rm i}r}
\\
&-\frac{1}{2\sigma-1}\sum_{k=1}^q\varphi_{jk}(\sigma+{\rm i}r)\overline{\varphi_{jk}(\sigma+{\rm i}r)}Y^{1-2\sigma}.
\end{align*}
We want to take the limit when $\sigma\rightarrow 1/2$. We find that as $\sigma\rightarrow 1/2$,
\begin{gather*}
Y^{2\sigma-1}=1+(2\sigma-1)\log Y+ O\big((2\sigma-1)^2\big),
\\
\varphi_{jk}(\sigma+{\rm i}r)=\varphi_{jk}\bigg(\frac{1}{2}+{\rm i}r\bigg) +\bigg(\sigma-\frac{1}{2}\bigg)\varphi'_{jk}\bigg(\frac{1}{2}+{\rm i}r\bigg)+ O\big((2\sigma-1)^2\big).
\end{gather*}
Hence,
\begin{gather*}
\sum_{j=1}^q\iint_{F}\bigg|E_j^Y\bigg(z,\frac{1}{2}+{\rm i}r;n\bigg) \bigg|^2y^{2n}{\rm d}\mu(z)
\\ \qquad
=\frac{1}{2{\rm i}r}\operatorname{Tr}\bigg[\Phi\bigg(\frac{1}{2}-{\rm i}r\bigg)Y^{2{\rm i}r}-\Phi\bigg(\frac{1}{2}+{\rm i}r\bigg)Y^{-2{\rm i}r}\bigg]+\frac{q}{2\sigma-1}\bigg(1+(2\sigma-1)\log Y\bigg)
\\ \qquad\phantom{=}
{}-\frac{1}{2\sigma-1}\sum_{j=1}^q\sum_{k=1}^q\bigg[\varphi_{jk}\bigg(\frac{1}{2}+{\rm i}r\bigg)+
\bigg(\sigma-\frac{1}{2}\bigg)\varphi'_{jk}\bigg(\frac{1}{2}+{\rm i}r\bigg)\bigg]
\\ \qquad\phantom{=-}
{}\times
\bigg[\varphi_{kj}\bigg(\frac{1}{2}-{\rm i}r\bigg)+\bigg(\sigma-\frac{1}{2}\bigg)\varphi'_{kj}\bigg(\frac{1}{2}-{\rm i}r\bigg)\bigg]\bigg(1-(2\sigma-1)\log Y\bigg)+O(2\sigma-1).
\end{gather*}
Now the matrix $\Phi(s)=\big[\varphi_{ij}(s)\big]$ satisfies
\begin{gather*}
\Phi(s)\Phi(1-s)=I.
\end{gather*}
Hence, for each $1\leq j\leq q$,
\begin{gather*}
\sum_{k=1}^q\varphi_{jk} \bigg(\frac{1}{2}+{\rm i}r\bigg) \varphi_{kj}\bigg(\frac{1}{2}-{\rm i}r\bigg)=1.
\end{gather*}
Moreover,
\begin{gather*}
\Phi(1-s)=\Phi(s)^{-1}.
\end{gather*}
Since
\begin{gather*}
\frac{\rm d}{{\rm d}s}\log\det\Phi(s)=\operatorname{Tr}\big[\Phi'(s)\Phi^{-1}(s)\big],
\end{gather*}
and
\begin{gather*}
\frac{\rm d}{{\rm d}s}\log\det\Phi(s)= -\frac{\rm d}{{\rm d}s}\log\det\Phi(1-s)
\end{gather*}
we find that
\begin{gather*}
\operatorname{Tr}\bigg[\Phi'\bigg(\frac{1}{2}+{\rm i}r\bigg)
\Phi^{-1}\bigg(\frac{1}{2}+{\rm i}r\bigg) \bigg]
=\operatorname{Tr}\bigg[\Phi'\bigg(\frac{1}{2}-{\rm i}r\bigg)
\Phi^{-1}\bigg(\frac{1}{2}-{\rm i}r\bigg) \bigg].
\end{gather*}
This gives
\begin{align*}
\sum_{j=1}^q\iint_{F}\bigg|E_j^Y\bigg(z,\frac{1}{2}+{\rm i}r;n\bigg) \bigg|^2y^{2n}{\rm d}\mu(z)
={}&\frac{1}{2{\rm i}r}\operatorname{Tr}\bigg[\Phi\bigg(\frac{1}{2}-{\rm i}r\bigg)Y^{2{\rm i}r}-\Phi\bigg(\frac{1}{2}+{\rm i}r\bigg)Y^{-2{\rm i}r}\bigg]
\\
&+2q\log Y-\operatorname{Tr}\bigg[\Phi'\bigg(
\frac{1}{2}+{\rm i}r\bigg)\Phi^{-1}\bigg(\frac{1}{2}+{\rm i}r\bigg)\bigg].
\end{align*}
The assertion of the theorem follows.
\end{proof}

\section[Asymptotic behavior of the zeta function Z ell(s)]
{Asymptotic behavior of the zeta function $\boldsymbol{Z_{{\rm ell}}(s)}$}\label{Elliptic}
In this section, we want to compute the asymptotic behavior of $\log Z_{{\rm ell}}(s)$, where
\begin{gather*}
Z_{{\rm ell}}(s)=\prod_{j=1}^v \prod_{r=0}^{m_j-1}\Gamma\bigg(\frac{s+r}{m_j}\bigg)^{\frac{2\alpha_{m_j}(r-n)+1-m_j}{2m_j}}
\Gamma\bigg(\frac{s+2n+r}{m_j}\bigg)^{\frac{2\alpha_{m_j}(r+n)+1-m_j}{2m_j}},
\end{gather*}
when $u=s+n-\frac{1}{2}$ is large.
From the asymptotic behavior of $\log\Gamma(s)$ \eqref{gammabe}, we have
\begin{gather*}
\log Z_{{\rm ell}}(s)= \mathscr{A}u\log u+\mathscr{B}\log u+\mathscr{C}u+\mathscr{D}+o(1),
\end{gather*}
where
\begin{gather*}
\mathscr{A}= \sum_{j=1}^v\frac{\alpha_j}{m_j},\qquad
\mathscr{B}=\sum_{j=1}^v\beta_j,\qquad
\mathscr{C}= \sum_{j=1}^v\frac{1}{m_j} (-\alpha_j-\alpha_j \log m_j),
\\
\mathscr{D}= - \sum_{j=1}^v\beta_j\log m_j+\frac{1}{2}\log(2\pi)\sum_{j=1}^v \alpha_j,
\end{gather*}
and
\begin{gather*}
\alpha_j= \sum_{r=0}^{m_j-1} \bigg(\frac{2\alpha_{m_j}(r-n)+1-m_j}{2m_j} +\frac{2\alpha_{m_j}(r+n)+1-m_j}{2m_j}\bigg),
\\
\beta_j=\sum_{r=0}^{m_j-1}\bigg\{\frac{2\alpha_{m_j}(r-n)+1-m_j}{2m_j}
\bigg(\frac{-n+\frac{1}{2}+r}{m_j}-\frac{1}{2}\bigg)
\\ \hphantom{\beta_j=\sum_{r=0}^{m_j-1}\bigg\{}
{} +\frac{2\alpha_{m_j}(r+n)+1-m_j}{2m_j}\bigg(\frac{n+\frac{1}{2}+r}{m_j}-\frac{1}{2}\bigg)\bigg\}.
\end{gather*}
From \eqref{eq314_1}, we find that $\alpha_j=0$ for $1\leq j\leq 0$. Hence, $ \mathscr{A}=\mathscr{C}=0$, and
\begin{gather*}
\mathscr{D}= - \sum_{j=1}^v\beta_j\log m_j.
\end{gather*}
 It remains to calculate $\beta_j$. First we notice that
\begin{gather*}
\beta_j= \frac{1}{m_j}\sum_{r=0}^{m_j-1}\bigg\{r\frac{2\alpha_{m_j}(r-n)+1-m_j}{2m_j} +r\frac{2\alpha_{m_j}(r+n)+1-m_j}{2m_j} \bigg\}
\end{gather*}
 because of \eqref{eq314_1}. If $\alpha_{m_j}(n)=\ell$, then
 \begin{gather*}
 \alpha_{m_j}(r+n)=\begin{cases} \ell+r,&0\leq r\leq m_j-\ell-1,
 \\
 \ell+r-m_j,& m_j-\ell\leq r\leq m_j-1,\end{cases}
 \\
 \alpha_{m_j}(r-n)=\begin{cases} m_j+r-\ell,&0\leq r\leq\ell-1,
 \\ r-\ell,&\ell\leq r\leq m_j-1.\end{cases}
 \end{gather*}
With the help of a standard computer algebra, we find that
\begin{gather*}
\beta_j=\frac{m_j^2-1}{6m_j}-\frac{\alpha_{m_j}(n)(m_j-\alpha_{m_j}(n))}{m_j}.
\end{gather*}
Hence,
\begin{gather*}
\mathscr{B}=\sum_{j=1}^v\bigg(\frac{m_j^2-1}{6m_j}
-\frac{\alpha_{m_j}(n)(m_j-\alpha_{m_j}(n))}{m_j}\bigg),
\\
\mathscr{D}=-\sum_{j=1}^v\bigg(\frac{m_j^2-1}{6m_j}
-\frac{\alpha_{m_j}(n)(m_j-\alpha_{m_j}(n))}{m_j}\bigg)\log m_j.
\end{gather*}

\subsection*{Acknowledgements}
This research is supported by the Ministry of Higher Education Malaysia through the Fundamental Research Grant Scheme (FRGS) FRGS/1/2018/STG06/XMU/01/1. We would
like to thank L.~Takhtajan and J.~Friedman who have given helpful comments and
suggestions. We would also like to thank the referees for reading the paper carefully and giving valuable comments.

\pdfbookmark[1]{References}{ref}
\LastPageEnding

\end{document}